  \gdef\@settitle{%
    \vspace*{3pt}
    \begin{flushleft}%
      \Large\bfseries
      \strut\@title\strut
    \end{flushleft}%
  }
  \gdef\@setauthors{%
    \begingroup
    \def\thanks{\protect\thanks@warning}%
    \trivlist
    \raggedright
    \large \@topsep31\p@\relax
    \advance\@topsep by -\baselineskip
    \item\relax
    \author@andify\authors
    \def\\{\protect\linebreak}%
    \authors
    \ifx\@empty\contribs
    \else
      ,\penalty-3 \space \@setcontribs
      \@closetoccontribs
    \fi
    \normalfont
    \endtrivlist
    \endgroup
  }
  \gdef\@setaddresses{\par
    \nobreak \begingroup
    \small\raggedright
    \def\author##1{\nobreak\addvspace\smallskipamount}%
    \def\\{\unskip, \ignorespaces}%
    \interlinepenalty\@M
    \def\address##1##2{\begingroup
      \par\addvspace\bigskipamount\noindent
      \@ifnotempty{##1}{(\ignorespaces##1\unskip) }%
      {\ignorespaces##2}\par\endgroup}%
    \def\curraddr##1##2{\begingroup
      \@ifnotempty{##2}{\nobreak\noindent\curraddrname
        \@ifnotempty{##1}{, \ignorespaces##1\unskip}\/:\space
        ##2\par}\endgroup}%
    \def\email##1##2{\begingroup
      \@ifnotempty{##2}{\nobreak\noindent E-mail address%
        \@ifnotempty{##1}{, \ignorespaces##1\unskip}\/:\space
        \ttfamily##2\par}\endgroup}%
    \def\urladdr##1##2{\begingroup
      \def~{\char`\~}%
      \@ifnotempty{##2}{\nobreak\noindent\urladdrname
        \@ifnotempty{##1}{, \ignorespaces##1\unskip}\/:\space
        \ttfamily##2\par}\endgroup}%
    \addresses
    \endgroup
    \global\let\addresses=\@empty
  }
  \gdef\@setabstracta{%
      \ifvoid\abstractbox
    \else
      \skip@19pt \advance\skip@-\lastskip
      \advance\skip@-\baselineskip \vskip\skip@
      \box\abstractbox
      \prevdepth\z@ 
      \vskip-28pt
    \fi
  }
  \renewenvironment{abstract}{%
    \ifx\maketitle\relax
      \ClassWarning{\@classname}{Abstract should precede
        \protect\maketitle\space in AMS document classes; reported}%
    \fi
    \global\setbox\abstractbox=\vtop \bgroup
      \normalfont\small
      \list{}{\labelwidth\z@
        \leftmargin0pc \rightmargin\leftmargin
        \listparindent\normalparindent \itemindent\z@
        \parsep\z@ \@plus\p@
        
      }%
      \item[\hskip\labelsep\bfseries\abstractname.]%
  }{%
    \endlist\egroup
    \ifx\@setabstract\relax \@setabstracta \fi
  }
  \gdef\ps@headings{\ps@empty
    \def\@evenhead{%
      \setTrue{runhead}%
      \normalfont\scriptsize
      \rlap{\thepage}\hfill
      \def\thanks{\protect\thanks@warning}%
      \leftmark{}{}}%
    \def\@oddhead{%
      \setTrue{runhead}%
      \normalfont\scriptsize
      \def\thanks{\protect\thanks@warning}%
      \rightmark{}{}\hfill \llap{\thepage}}%
    \let\@mkboth\markboth
  }\ps@headings
  \gdef\section{\@startsection{section}{1}%
    \z@{-1.4\linespacing\@plus-.5\linespacing}{.8\linespacing}%
    {\normalfont\bfseries\large}}
  \gdef\subsection{\@startsection{subsection}{2}%
    \z@{-.8\linespacing\@plus-.3\linespacing}{.5\linespacing\@plus.2\linespacing}%
    {\normalfont\bfseries}}
  \gdef\subsubsection{\@startsection{subsubsection}{3}%
    \z@{.7\linespacing\@plus.2\linespacing}{-1.5ex}%
    {\normalfont\bfseries}}
  \gdef\@secnumfont{\bfseries}
  \renewcommand\contentsnamefont{\bfseries}
  \gdef\@starttoc#1#2{\begingroup
    \setTrue{#1}%
    \par\removelastskip\vskip\z@skip
    \@startsection{}\@M\z@{\linespacing\@plus\linespacing}%
      {.5\linespacing}{
        \contentsnamefont}{#2}%
    \ifx\contentsname#2%
    \else \addcontentsline{toc}{section}{#2}\fi
    \makeatletter
    \@input{\jobname.#1}%
    \if@filesw
      \@xp\newwrite\csname tf@#1\endcsname
      \immediate\@xp\openout\csname tf@#1\endcsname \jobname.#1\relax
    \fi
    \global\@nobreakfalse \endgroup
    \addvspace{32\p@\@plus14\p@}%
    \let\tableofcontents\relax
  }
  \gdef\contentsname{Contents}
  \gdef\l@section{\@tocline{2}{.5ex}{0mm}{5pc}{}}
  \gdef\l@subsection{\@tocline{2}{0pt}{2em}{5pc}{}}
\theoremstyle{plain}
\newtheorem{thm}{Theorem}[section]
\newtheorem{lem}[thm]{Lemma}
\newtheorem{prop}[thm]{Proposition}
\newtheorem{claim}[thm]{Claim}
\newtheorem{conj}[thm]{Conjecture}
\theoremstyle{definition}
\newtheorem{exl}[thm]{Example}
\newtheorem{defn}[thm]{Definition}
\newtheorem{remark}[thm]{Remark}
\newtheorem{quest}[thm]{Question}
\DeclareMathOperator{\sr}{\Sigma_2(R)}
\DeclareMathOperator{\Z}{\mathbb{Z}}
\DeclareMathOperator{\Q}{\mathbb{Q}}
\DeclareMathOperator{\F}{\mathbb{F}}
\DeclareMathOperator{\ssm}{\smallsetminus}
\DeclareMathOperator{\Sig}{\Sigma}
\DeclareMathOperator{\GL}{GL}
\DeclareMathOperator{\Arf}{Arf}
\DeclareMathOperator{\Imm}{Im}
\DeclareMathOperator{\Hom}{Hom}
\DeclareMathOperator{\Ext}{Ext}
\DeclareMathOperator{\coker}{coker}
\DeclareMathOperator{\grr}{g-rk}
\DeclareMathOperator{\Id}{Id}
\DeclareMathOperator{\Aut}{Aut}
\DeclareMathOperator{\Bl}{Bl}
\newcommand{\wt}{\widetilde}
\begin{document}

\title{Two-solvable and two-bipolar knots with large four-genera}

\author{Jae Choon Cha}
\address{
  Department of Mathematics\\
  POSTECH\\
  Pohang Gyeongbuk 37673\\
  Republic of Korea\linebreak
  School of Mathematics\\
  Korea Institute for Advanced Study \\
  Seoul 02455\\
  Republic of Korea
}
\email{jccha@postech.ac.kr}

\author{Allison N.~Miller}
\address{
  Department of Mathematics\\
  Rice University\\
  Houston, TX, USA}
\email{allison.miller@rice.edu}

\author{Mark Powell}
\address{
  Department of Mathematical Sciences\\
  Durham University\\
  United Kingdom}
\email{mark.a.powell@durham.ac.uk}

\def\subjclassname{\textup{2010} Mathematics Subject Classification}
\expandafter\let\csname subjclassname@1991\endcsname=\subjclassname
\expandafter\let\csname subjclassname@2000\endcsname=\subjclassname
\subjclass{57M25, 57M27, 57N70.}
\keywords{four-genus, knot concordance, grope, solvable filtration,
bipolar filtration, $L^{(2)}$-signature, Casson-Gordon invariant}

\begin{abstract}
For every integer $g$, we construct a $2$-solvable and $2$-bipolar knot whose
topological $4$-genus is greater than~$g$. Note that $2$-solvable knots are in
particular algebraically slice and have vanishing Casson-Gordon obstructions.
Similarly all known smooth 4-genus bounds from gauge theory and Floer homology
vanish for $2$-bipolar knots.  Moreover, our knots bound smoothly embedded height
four gropes in $D^4$, an a priori stronger condition than being $2$-solvable. We use
new lower bounds for the $4$-genus arising from $L^{(2)}$-signature defects
associated to meta-metabelian representations of the fundamental
group.
\end{abstract}

\maketitle

\section{Introduction}

A knot $K$ in $S^3$ is \emph{slice} if there exists a locally flat proper embedding
$D^2 \hookrightarrow D^4$ such that the boundary of $D^2$ is the knot $K$. This idea of `4-dimensional triviality' can be
generalized in a number of ways, perhaps most easily by approximating a disc by
a small genus surface. The \emph{$4$-genus} $g_4(K)$ of a knot $K$ in $S^3$ is
the minimal possible genus $g(\Sigma)$ of an orientable surface~$\Sigma$ with a locally flat proper embedding $\Sigma
\hookrightarrow D^4$  in the 4-ball, where
$\Sigma$ has a single boundary component whose image coincides with~$K$. From this point of view, a knot is approximately
slice if it has small 4-genus.  However, this perspective does not give successively closer approximations to sliceness; there also exist many knots of $4$-genus one, such as the trefoil, which intuitively seem far from slice.

An alternative approach is to approximate the slice disc exterior $X_D := D^4 \ssm
\nu(D^2)$, a compact $4$-manifold with the three key properties that
(i)~$\partial X_D=M_K$, the 0-surgery of $S^3$ along~$K$; (ii)~the inclusion
induces an isomorphism $H_1(M_K) \cong H_1(X_D)$; and (iii)~$H_2(X_D)=0$. We
therefore think of a compact 4-manifold $W$ with $\partial W= M_K$ such that  $i_* \colon H_1(M_K)\to
H_1(W)$ is an isomorphism and some condition on $H_2(W)$ is satisfied
as an approximation to a slice disc exterior. One might ask that $H_2(X_D)$ is of small rank, but a little thought shows that this essentially recovers the
4-genus condition, besides again not yielding arbitrarily refined
approximations.

 In~\cite{Cochran-Orr-Teichner:1999-1}, Cochran, Orr, and
Teichner introduced a new perspective, motivated by surgery theory, in which one
allows $H_2(W)$ to be arbitrarily large but requires that it is generated by
almost disjointly embedded surfaces with a condition on the image of their fundamental groups in~$\pi_1(W)$. See
Section~\ref{section:solvable} for the precise definition.  In fact, they give
an infinite family of increasingly strict conditions, indexed by $h \in \frac12
\mathbb{N}$: a knot is said to be \emph{$h$-solvable} if its 0-surgery bounds a
slice disc exterior approximation satisfying the $h$th such condition.   It is an open question whether any knot which
is $h$-solvable for all $h$ must be slice, and in general knots which are
$h$-solvable for large $h$ are hard to distinguish from slice knots.

The idea of solvability is closely related to the more geometric notion of bounding a \emph{grope} of large height.  A grope of height 1 is defined to be an orientable surface of arbitrary genus and a single boundary component, and a grope of height $n$ is obtained by attaching boundaries of gropes of height $n-1$ to an orientable surface along standard basis curves.  We refer to
\cite{Freedman-Quinn:1990-1,Cochran-Orr-Teichner:1999-1}, or our
Section~\ref{section:height-four-gropes} for the precise definition.
A grope of larger height is a better approximation to a disc.
Gropes are ingredients of fundamental importance for the topological disc embedding technology of Freedman and Quinn~\cite{Freedman:1984-1,Freedman-Quinn:1990-1} on 4-manifolds, and also in the work of Cochran, Orr
and Teichner~\cite{Cochran-Orr-Teichner:1999-1} discussed above, where it was shown that if a knot $K$ bounds an embedded framed grope of height $h$ in $D^4$ then $K$ is $(h-2)$-solvable. The converse remains an open question.

It is natural to ask whether the 4-genus and grope/$n$-solvability approximations to sliceness have any relationship.

\begin{quest}[{\cite[Remark~5.6]{Cha:2006-1}}]
  \label{quest:main}
  For a fixed $h$, do there exist $h$-solvable knots, i.e.\ knots which are
  close to slice in the sense of \cite{Cochran-Orr-Teichner:1999-1}, which have
  arbitrarily large 4-genera, and hence are far from slice in the first sense?
\end{quest}

This question seems to be difficult, one reason for which is that existing methods for extracting lower bounds for the
topological 4-genus are not effective for $h$-solvable knots with $h\ge 2$. The simplest lower bounds are the Tristram-Levine signature function and Taylor's bound~\cite{Taylor:1979}, the best possible bound for the 4-genus coming from the Seifert form.  For algebraically slice knots these lower bounds vanish.
In~\cite{Gilmer:1982-1}, Gilmer showed that there are algebraically slice knots with arbitrarily large 4-genus using Casson-Gordon
signatures~\cite{Casson-Gordon:1978-1, Casson-Gordon:1986-1}.
In~\cite{Cha:2006-1}, Cha showed that there exist knots with arbitrarily large
4-genus which are algebraically slice and have vanishing Casson-Gordon
signatures, using Cheeger-Gromov Von Neumann $L^{(2)}$ $\rho$-invariants
corresponding to metabelian fundamental group representations. The above
abelian and metabelian lower bounds can be used to give affirmative answers to
Question~\ref{quest:main} for the initial cases $h=0$,~$1$,  but these lower bounds vanish for $h$-solvable knots with $h\ge 2$.  Extending Cha's $\rho$-invariant approach beyond the metabelian level to give further lower bounds for the 4-genus was left open, essentially because of difficulties arising from non-commutative algebra.

In this paper, we present a new method that avoids the non-commutative algebra problem.  It enables us to go one step further than Gilmer and Cha, by combining a Casson-Gordon type approach and $L^{(2)}$-signatures associated with representations to 3-solvable groups i.e.\ solvable groups with length 3 derived series.  Here is our main result.


\begin{thm}
  \label{thm:mainthm-intro}
  For each $g \in \mathbb{N}$, there exists a  2-solvable knot $K$
  with $g_4(K) >  g$. Moreover, $K$ bounds an embedded framed grope of height 4 in
  $D^4$.
\end{thm}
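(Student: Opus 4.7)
The plan is to build the knots $K = K_g$ by iterated infection and to obstruct small $4$-genus using $L^{(2)}$-signature defects associated to meta-metabelian representations of $\pi_1(M_K)$. Starting from an algebraically slice seed knot $J$ with an explicit genus one Seifert surface, I would infect twice in sequence: first along a curve $\alpha_1 \subset S^3 \ssm J$ which is unknotted in $S^3$ and lies in $\pi_1(S^3 \ssm J)^{(1)}$, using an auxiliary knot $\eta_1$; then along a curve $\alpha_2$ lying one commutator level deeper in the complement of the result, using $\eta_2$. With the $\alpha_i$ placed on an embedded height-two grope sitting inside a Seifert surface complement of $K$, the standard Cochran--Orr--Teichner infection calculus from \cite{Cochran-Orr-Teichner:1999-1} will show that the resulting knot $K$ bounds a smoothly embedded framed grope of height $4$ in $D^4$, which in particular forces $K$ to be $2$-solvable.

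For the lower bound, suppose $g_4(K) \le g$, so that $K$ bounds a locally flat orientable surface $\Sigma \subset D^4$ of genus at most $g$. Standard cut-and-paste constructions then produce a compact $4$-manifold $W$ with $\partial W = M_K$, $H_1(W) \cong \mathbb{Z}$, and $\mathrm{rk}\, H_2(W) = 2g$. I would construct an explicit family of meta-metabelian representations $\phi \colon \pi_1(M_K) \to \Gamma$ (with $\Gamma^{(3)} = 1 \ne \Gamma^{(2)}$) from prime-power characters on the second iterated abelian cover of $M_K$, arranged so that each $\phi$ detects the infection data $\eta_1$ and $\eta_2$. On the one hand, the Cheeger--Gromov invariant $\rho^{(2)}(M_K, \phi)$ can be computed in terms of Tristram--Levine signature integrals of $\eta_1, \eta_2$ and can be made arbitrarily large by the choice of these infecting knots. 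On the other hand, if $\phi$ extends over a suitable finite cover of $W$, the $L^{(2)}$-signature theorem bounds $|\rho^{(2)}(M_K, \phi)|$ linearly in $g$; taking the infections large enough relative to $g$ forces a contradiction.

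The principal obstacle, and the central innovation, is to show that at least one $\phi$ in this family genuinely extends over $W$, or equivalently cannot be killed by a genus-$g$ surface filling. At the metabelian stage Cha handled this by localizing over the commutative Alexander module, but the natural meta-metabelian analogue would require working over a non-commutative ring, which is precisely the difficulty the paper circumvents. My strategy is to emulate the Casson--Gordon argument one derived level higher: first pass to the infinite cyclic cover of $W$, whose rational Alexander module has size bounded linearly in $g$, and extract a surviving prime-power character on its torsion; then combine this with a further prime-power character on the next derived quotient to assemble the desired representation $\phi$, using a counting argument over the family of infection parameters to ensure that sufficiently many $\phi$ survive to yield the required $L^{(2)}$-signature excess.
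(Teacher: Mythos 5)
Your outline correctly identifies the shape of the problem and the key obstacle, but the central technical step you propose does not actually sidestep the non-commutative difficulty you yourself flag, and this is precisely the gap the paper fills with a different device.

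You write that you would ``extract a surviving prime-power character on [the infinite cyclic cover's] torsion; then combine this with a further prime-power character on the \emph{next derived quotient}.'' That second step is where the plan breaks down. The homology associated to that next derived quotient lives over a ring such as $\Z[\Z\ltimes \Z_p]$ or a localization thereof, which is not commutative, and (as the paper states explicitly) no analogue of the Casson--Gordon/Gilmer/Cha ``generating rank vs.\ $g$'' counting argument is known in that setting. The paper does \emph{not} take a character on the next derived quotient. Instead, it feeds the knot group into the Casson--Gordon type \emph{metabelian} representation $\theta\circ f_\chi\colon \pi_1(M_K)\to\Z\ltimes\Z_p\to \GL_2(\Q(\xi_p)[t^{\pm1}])$; the associated twisted homology module is then over the \emph{commutative} PID $\Q(\xi_p)[t^{\pm1}]$, and all of the generating-rank and order-counting (Lemmas~\ref{lemma:moregeneralhomology}, \ref{lem:intersecttorsion}, Proposition~\ref{prop:homology}) happens there. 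The bridge to the meta-metabelian level is Proposition~\ref{prop:7-1-MP}: nonvanishing of the lifted longitude in this \emph{metabelian} twisted homology forces $\lambda(\eta_i^j)\notin \pi_1(Z)^{(3)}_{(\Q,\Z_{p^a},\Q)}$, and only then is the $\rho$-invariant taken with respect to the meta-metabelian quotient $\Lambda=\pi_1(Z)/\pi_1(Z)^{(3)}_{(\Q,\Z_{p^a},\Q)}$. Your proposal lacks this bridge and would run into exactly the problem you acknowledge as open.

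Two further points. First, the paper does not iterate infection through successively deeper commutator levels; it performs a single satellite operation on a ribbon seed $R$ along curves $\alpha^\pm$ already chosen in $\pi_1(M_R)^{(2)}$ (built as explicit commutators), and then takes a connected sum $\#_{i=1}^N$ of such pieces with $N$ scaling linearly in $g$ (e.g.\ $N\geq \frac{4g(d_R+1)+2}{m_R}$). The connected sum and the resulting product structure of $H_1(\Sigma_2(K))$ are essential for the counting argument; a single iterated infection of one seed would not provide enough room for a character to survive against a genus-$g$ surface. Moreover, the hypothesis that the order of $H_1^{\theta\circ f_\chi}(M_R)$ is coprime to $\Delta_R(t)$ is used to separate the contribution of the copies with trivial character from those with nontrivial character, and your sketch has no analogue of this. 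Second, the claim that the Cochran--Orr--Teichner infection calculus directly produces an embedded height-4 grope from the arrangement you describe is not quite a direct citation; the paper establishes grope bounding in Proposition~\ref{prop:gropebounding} by a satellite capped grope concordance argument, following \cite{Cha-Kim:2016-1} and \cite[Remark~8.14]{Cochran-Orr-Teichner:1999-1}, which is a separate geometric construction from the $2$-solvability statement.
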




Moreover, the knots of Theorem~\ref{thm:mainthm-intro} are  \emph{2-bipolar} in the sense of Cochran, Harvey
and Horn~\cite{Cochran-Harvey-Horn:2012-1}.  We give the definition in
Section~\ref{section:solvable}, noting for now that the notion of bipolarity is
an approximation to being smoothly slice, which combines the idea of Donaldson's
diagonalization theorem with fundamental group information related to gropes
and derived series.  Also, for a 2-bipolar knot, the invariants $\tau$,
$\Upsilon$, $\varepsilon$, $\nu^+$ from Heegaard-Floer homology, as well as the
$d$-invariants of $p/q$ surgery, all cannot prove that the knot is not smoothly
slice, and consequently cannot bound the smooth
$4$-genus~\cite{Cochran-Harvey-Horn:2012-1}. This also holds for gauge theoretic
obstructions such as those arising from Donaldson's theorem and the $10/8$
theorem.

Theorem~\ref{thm:mainthm-intro} answers the $h=2$ case of
Question~\ref{quest:main}, and prompts us to conjecture that the answer is `yes' in general. In fact, we make a bolder conjecture.

\begin{conj}\label{conj:stablegenus}
  Let $K$ be an $h$-solvable knot which is not torsion in $\mathcal{C}$. Then
  $\{\#^n K\}$ is a collection of $h$-solvable knots containing knots with
  arbitrarily large 4-genera.
\end{conj}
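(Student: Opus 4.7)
The plan is to attack Conjecture~\ref{conj:stablegenus} through $L^{(2)}$-signature defects for representations to solvable groups of derived length $h+1$, refining the strategy used to prove Theorem~\ref{thm:mainthm-intro}. First, closure of $h$-solvability under connected sum is immediate: if $W_i$ is an $h$-solution for a copy $K_i$ of $K$, then the boundary connected sum $W_1 \natural \cdots \natural W_n$ is an $h$-solution for $\#_{i=1}^n K$, since the $H_1$-isomorphism and the existence of an $h$-th-derived Lagrangian in $H_2$ are preserved under $\natural$. Thus the content of the conjecture is the unboundedness of $g_4(\#^n K)$ in $n$.

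Suppose for contradiction that $g_4(\#^n K)\le g$ for all $n$ and some fixed $g$. Fix properly, locally flatly embedded genus-$g$ surfaces $\Sigma_n \subset D^4$ bounding $\#^n K$ and let $V_n := D^4 \ssm \nu(\Sigma_n)$, so $H_1(V_n)\cong \Z$ and $b_2(V_n)\le 2g$. The Cheeger--Gromov universal bound, in the form sharpened by Cha, then yields
\[
|\rho^{(2)}(M_{\#^n K},\varphi)| \le C\cdot b_2(V_n) \le 2Cg
\]
for every representation $\varphi$ of $\pi_1(M_{\#^n K})$ that extends over $\pi_1(V_n)$, with $C$ a universal constant depending only on the diffeomorphism type of the underlying $3$-manifold. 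It therefore suffices, for each $g$, to exhibit an $n$ and a representation $\varphi_n\colon \pi_1(V_n)\to \Gamma_n$, with $\Gamma_n$ PTFA of derived length at most $h+1$, such that $|\rho^{(2)}(M_{\#^n K},\varphi_n)|>2Cg$.

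The construction of $\varphi_n$ is the heart of the argument. Because $K$ is not torsion, one expects a single nontrivial higher-order $\rho$-invariant for $K$: a representation $\psi \colon \pi_1(M_K) \to \Lambda$ to a PTFA group of derived length $h+1$, extending over every $h$-solution for $K$, with $\rho^{(2)}(M_K,\psi)\ne 0$. From such a $\psi$ one assembles $\varphi_n$ by pasting copies of $\psi$ on each connect-summand along a compatible tower of mixed-coefficient systems, in direct analogy with the metabelian-to-meta-metabelian promotion of Casson--Gordon representations carried out in this paper. Additivity of $L^{(2)}$-signatures under connected sum for compatibly chosen representations then gives $\rho^{(2)}(M_{\#^n K},\varphi_n)=n\cdot \rho^{(2)}(M_K,\psi)$, contradicting the bound above once $n>2Cg/|\rho^{(2)}(M_K,\psi)|$.

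The principal obstacle is the production of the initial representation $\psi$. At heights $h=0,1$ this is supplied by classical signatures and by Cha's metabelian construction \cite{Cha:2006-1} respectively, and at $h=2$ the meta-metabelian invariants of this paper suffice for the explicit families appearing in Theorem~\ref{thm:mainthm-intro}. Turning the bare hypothesis ``$K$ is not torsion in $\mathcal{C}$'' into the existence of such a $\psi$ for an arbitrary $h$-solvable input knot is essentially equivalent to detecting the filtration quotient $\mathcal{F}_h/\mathcal{F}_{h+1}$ by an additive higher-order $\rho$-invariant; progress on the conjecture therefore requires upgrading the algebraic framework of this paper from its current infection-based examples to intrinsic invariants of abstract $h$-solvable concordance classes, and this is expected to be the main technical hurdle.
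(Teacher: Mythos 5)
This statement is explicitly labeled a \emph{conjecture} in the paper (Conjecture~\ref{conj:stablegenus}), and the authors offer no proof of it; indeed they go out of their way to flag, just after stating it, that a counterexample would also be interesting, and they note the related question of whether any non-torsion knot has stable $4$-genus zero is open. So there is no paper proof to compare your argument against, and your submission cannot ``match'' anything.

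Reading your sketch on its own merits: the first paragraph, on closure of $h$-solvability under connected sum via boundary connected sum of $h$-solutions, is correct and standard. The rest is an outline of a plausible program, but it does not constitute a proof, and you are candid about this at the end. The decisive gap is the step you name yourself: producing a single representation $\psi\colon\pi_1(M_K)\to\Lambda$ to a solvable group of derived length $h+1$ that extends over \emph{every} genus-$g$ surface exterior $V_n$ and has $\rho^{(2)}(M_K,\psi)\ne 0$. The hypothesis that $K$ is non-torsion in $\mathcal{C}$ gives no access to such a $\psi$; it is a priori possible for a non-torsion $h$-solvable knot to have vanishing $\rho$-invariants at every solvable level, and even when a nonzero $\rho$-invariant exists there is no mechanism to force the relevant representation to extend over the a priori unknown $V_n$ or to be compatible with the infection/connected-sum structure needed for the additivity formula. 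This is precisely the difficulty the paper spends Sections~\ref{section:controlling-homology-groups}--\ref{section:proof-of-main-theorem} overcoming for carefully engineered satellite knots, and the authors stress in the introduction that extending beyond the metabelian level for general knots is blocked by non-commutative algebra; for arbitrary $h$ and arbitrary $h$-solvable $K$ this remains untouched.

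Two smaller issues: your invocation of the Cheeger--Gromov bound conflates the constant $C(Y)$ of Theorem~\ref{thm:cheeger-gromov-thm}, which depends on the $3$-manifold $M_{\#^n K}$ and hence grows with $n$, with the Betti-number bound of Theorem~\ref{thm:upperbound}, which requires the target group to be amenable and in Strebel's class $D(\Z_p)$ rather than PTFA; you would need to commit to the latter bound and adjust the class of groups $\Gamma_n$ accordingly. And the additivity formula $\rho^{(2)}(M_{\#^n K},\varphi_n)=n\cdot\rho^{(2)}(M_K,\psi)$ is not automatic; it requires the restriction of $\varphi_n$ to each summand to agree with $\psi$ up to an isomorphism of $\Lambda$ preserving the $\rho$-invariant, which in general it will not once $\varphi_n$ is forced to factor through $\pi_1(V_n)$. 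In short: this is a reasonable sketch of a research direction, but it is not a proof, and the statement is an open conjecture that the paper does not resolve.
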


A knot which did not satisfy the second sentence would be an example of a
non-torsion knot with stable 4-genus zero i.e.\ $\lim_{n \to \infty} g_4(nK)/n =
0$, and it is unknown whether any such knots exist~\cite{Livingston:2010}. Thus
a counterexample to this conjecture would also be very interesting.

One might also wonder whether there exist highly bipolar knots with large smooth
4-genus, especially with the additional requirement that they be topologically
slice.  The following question seems to be unknown even in the case $h=0$.

\begin{quest}
  Do there exist topologically slice $h$-bipolar knots with large smooth
  4-genus?
\end{quest}

As above, there are many reasonable candidate knots with which one might hope to answer `yes.'
 The main result of~\cite{Cha-Kim:2017} gave many examples of
topologically slice, $h$-bipolar knots $K$ which are of infinite order, even
modulo the subgroup of $(h+1)$-bipolar knots, and a smooth/bipolar analogue of
Conjecture~\ref{conj:stablegenus} suggests we should expect $\#^n K$ to have
arbitrarily large smooth 4-genus as $n \to \infty$.

\subsubsection*{Summary of the construction and proof}

In order to construct 2-bipolar knots bounding height four gropes, we
take connected sums of sufficiently many  copies of the
seed ribbon knot $R:=11_{n74}$, and perform satellite operations on a collection
of judiciously chosen infection curves $\{\alpha_i^+,\alpha_i^-\}$, with
$\alpha_i^{\pm}$ lying in the second derived subgroup $\pi_1(S^3 \ssm R)^{(2)}$
of the knot group of the $i$th copy of $R$. Our choice of $(R, \alpha^+, \alpha^-)$ is depicted on the right side of Figure~\ref{fig:11n742}.
We use knots $\{J_i^+,J_i^-\}$ with Arf invariant zero for the companions of the satellite operations,  chosen so that
the $\{J_i^+\}$ have increasingly large negative Tristram-Levine signature
functions and the $\{J_i^-\}$ have increasingly large positive signature
functions.

Let $K$ be the result of these satellite operations. In
Proposition~\ref{prop:infection2solvable}, we show that $K$ is 2-solvable; in
Proposition~\ref{prop:buildingbipolar}, we  show that $K$ is 2-bipolar; and in
Proposition~\ref{prop:gropebounding}, we show that $K$ bounds a grope of
height 4 in~$D^4$. Writing $K_i$ for the knot resulting from the satellite
construction on $(R,\alpha_i^{\pm}, J_i^{\pm})$, we have $K = \#_{i=1}^N K_i$.
Let $M_{K_i}$ be the zero-surgery manifold of $K_i$ and write $Y:=
\bigsqcup_{i=1}^N M_{K_i}$.

The main idea of our proof is as follows.  If there were a surface $\Sigma$ of genus $g$ embedded in $D^4$ with boundary $K$, then there would be an associated 4-manifold $Z$ with boundary $Y$ and a quotient $\Gamma$ of $\pi_1(Z)$ such that the $L^{(2)}$ $\rho$-invariant \[\rho^{(2)}(Y,\Gamma) := \rho^{(2)}(Y,\phi \colon \pi_1(Y) \to \pi_1(Z) \to \Gamma)\] would be bounded above by a constant depending only on $g$ and  the base knot $R$. However, by choosing the infection knots $\{J_i^{\pm}\}$ to have suitably large Tristram-Levine signature functions, $L^{(2)}$-induction will imply that $\rho^{(2)}(Y,\Gamma)$ must be very large so long some curve $\alpha_i^{\pm}$ represents an element of $\pi_1(Y)$  mapping nontrivially to $\Gamma$. The key difficulty is to show that this must always be the case, recalling that $\Gamma$ depends on the hypothesized surface $\Sigma$.

%
%
%

In Example~\ref{subsection:example2solvable} we
present a slightly simpler construction of a family of $2$-solvable knots with
arbitrary 4-genera, starting with connected sums of the ribbon knot
$8_8$ and performing a single satellite construction on each copy of~$8_8$ as indicated in Figure~\ref{fig:88}.

\subsubsection*{Coefficient systems: comparison with earlier methods}

To show the nontriviality of some $\alpha_i^{\pm}$ in $\Gamma$, we use twisted
homology over a metabelian representation to define the coefficient system.
Although the representation is non-abelian, we use the ideas of Casson and
Gordon~\cite{Casson-Gordon:1986-1} to define finitely generated twisted homology
modules over a \emph{commutative} principal ideal domain.  The commutativity enables us to consider the ``size'' of the twisted homology modules in terms of the minimal number of generators, generalizing the abelian representation case in
e.g.~\cite{Cha:2006-1}.  Supposing that the 4-genus is small compared to the
size of the twisted first homology, we show that there is a \emph{meta-metabelian}
quotient $\Gamma$ of $\pi_1(Z)$, i.e. a quotient whose third derived subgroup vanishes,  in which one of the $\alpha_i^{\pm}$ is nontrivial in order to eventually obtain a contradiction.  In previous approaches to slice obstructions using  $L^{(2)}$-signature defects corresponding to representations to groups with nontrivial $n$th derived subgroups for $n \geq 2$, the homology modules associated to non-abelian representations were over non-commutative rings, for which it is still unknown how to implement an analogous generating rank argument.

In our method, it is also crucial to use $L^{(2)}$-signatures
over amenable groups that are \emph{not torsion-free}, which were developed
in~\cite{Cha-Orr:2009-01, Cha:2014-1} and deployed in a similar context in~\cite{MilPow17}.

\subsubsection*{The smooth slice genus}

We remark that concordance obstructions predicated on a smooth embedding cannot
be used to draw conclusions about locally flat surfaces, and hence cannot be
used to prove our result. On the other hand our knots have arbitrarily large
smooth 4-genus, since a smooth embedding of a surface in $D^4$ is in particular
a locally flat embedding.

If we were interested in the smooth 4-genus version
of Theorem~\ref{thm:mainthm-intro}, currently known techniques using Heegaard
Floer homology or gauge theory would not apply.  It is unknown whether the
Rasmussen $s$-invariant, which does provide a lower bound on the smooth 4-genus
of a knot, must vanish for 2-bipolar knots.  Our knots are even the first
examples in the literature of 1-bipolar knots with large 4-genus, though for that result one could use a simpler Casson-Gordon signature argument analogous to~\cite{Gilmer:1982-1}.

\subsubsection*{Horn's results}

The fact that $g_4(K)$ is large implies that the base surface of any embedded
grope in $D^4$ with boundary $K$ must have large genus. The main theorem of
Horn~\cite{Horn:2011} gives examples, for each $g$ and each $n$, of knots
bounding height $n$ gropes such that the base surface of any height $n$ grope
must have genus at least $g$. However, Horn's example knots are not known to
have large 4-genera: he was only able to provide lower bounds on the genera of
surfaces that extend to an embedding of a height $n$ grope.

\subsubsection*{Organization of the paper}

The next four sections are concerned with background theory.
Section~\ref{section:solvable} recalls the definitions of the derived series of
a group, a useful variation called the local derived series, and what it means
for a knot to be $h$-solvable or $h$-bipolar.  We also explain here how to
construct $h$-solvable  and $h$-bipolar knots using the satellite construction.
Section~\ref{section:disconnected} introduces some conventions for dealing with
disconnected manifolds, in particular as relates to representations of their
fundamental groupoids and associated twisted homology groups.
Section~\ref{section:L2-invariants} recalls the Cheeger-Gromov von Neumann
$L^{(2)}$ $\rho$-invariant $\rho^{(2)}(Y,\phi)$ of a closed 3-manifold $Y$
together with a homomorphism of its fundamental group $\pi_1(Y) \to \Gamma$ to a
group $\Gamma$, and gives the facts about this invariant that we will need.
Section~\ref{section:metabelian-homology} describes homology twisted with
metabelian representations.  In particular we consider coefficient systems
inspired by Casson-Gordon invariants~\cite{Casson-Gordon:1986-1}.

Section~\ref{section:statement} begins the proof of
Theorem~\ref{thm:mainthm-intro}, by precisely stating the criteria that will
imply certain knots have large topological 4-genus, giving a brief outline of
the proof, and providing examples meeting those criteria.
Section~\ref{section:controlling-homology-groups} proves some technical lemmas
that are vital in arranging that the representation used for our
$\rho$-invariant computation is suitably nontrivial. For this, we control the
size of the homology groups of certain covering spaces. In
Section~\ref{section:a-standard-cobordism}  we review a standard cobordism used
in the proof of Theorem~\ref{thm:mainthm-intro}, and carefully  investigate the
way metabelian representations extend over this cobordism.
Section~\ref{section:proof-of-main-theorem} proves the main theorem by bounding
the $\rho$-invariant in two different ways as described above.
Section~\ref{section:height-four-gropes} proves that our knots bound height four
embedded gropes.

\subsubsection*{Acknowledgements}

The first and third authors thank the Max Planck Institute for Mathematics in Bonn, where they were visiting when part of the work on this paper occurred. The second author thanks Shelly Harvey for stimulating conversations.   The first author was partly supported by NRF grant 2019R1A3B2067839.  Finally, we thank the anonymous referee for a careful reading and useful suggestions which improved the paper.

\section{The solvable and bipolar filtrations}\label{section:solvable}

In this section we recall the definitions of the solvable and bipolar
filtrations, and how to construct highly solvable or bipolar knots.  We will
also need, later in the article, not just the standard derived series of a group
but also the local derived series~\cite{Cochran-Harvey:2004-1,
Cochran-Harvey:2007-01, Cha:2014-1}.

\begin{defn}\label{defn:nderived}
Let $G$ be a group. The \emph{$h$th derived subgroup}  $G^{(h)}$ of $G$  is
defined recursively via $G^{(0)}:=G$ and $G^{(h)}= [G^{(h-1)}, G^{(h-1)}]$ for
$h\geq1$. Moreover, for any sequence  $\mathcal{S}= (S_i)_{i \in \mathbb{N}}$ of
abelian groups, define the \emph{$h$th $\mathcal{S}$-local derived subgroup} of
$G$ recursively by $G_\mathcal{S}^{(0)}:=0$ and, for $h \geq 1$,
\[
  G_{\mathcal{S}}^{(h)}:= \ker\Bigl\{ G^{(h-1)}_\mathcal{S}
  \to G^{(h-1)}_\mathcal{S} / [ G^{(h-1)}_\mathcal{S} ,  G^{(h-1)}_\mathcal{S} ]
  \to \bigl( G^{(h-1)}_\mathcal{S} /
  [ G^{(h-1)}_\mathcal{S} ,  G^{(h-1)}_\mathcal{S} ] \bigr)
  \otimes_{\Z} S_{h} \Bigr\}.
\]
\end{defn}
We remark that a group $G$ is called \emph{metabelian} if $G^{(1)} \neq 0$ but $G^{(2)}=0$ and analogously \emph{meta-metabelian} if $G^{(2)} \neq 0$ but $G^{(3)}=0$. This explains some language from the introduction.

For any sequence $\mathcal{S}$ and any $h \in \mathbb{N}$ we have
that $G^{(h)} \subseteq G^{(h)}_\mathcal{S}$. Note that since for fixed $h \in \mathbb{N}$ the subgroup $G^{(h)}_\mathcal{S}$ only depends on the first $h$ terms of $\mathcal{S}$,
we will often take $\mathcal{S}=(S_1, \dots, S_h)$ to be a partial sequence.
We will be particularly interested
in $\mathcal{S}= (\Q, \Z_{p}, \Q)$ for
 a prime $p$.

For $h \in \mathbb{N}_{\geq 0}$, we now define $h$-solvability of a knot. As indicated in the
introduction, there is an extension of this definition to $h \in \frac{1}{2} \mathbb{N}_{\geq 0}$.
We do not require this more general definition, and refer the reader to~\cite[Definition 1.2]{Cochran-Orr-Teichner:1999-1}
for details.
\begin{defn}\label{defn:solvable}
  A knot $K$ is \emph{$h$-solvable} if there exists a compact spin 4-manifold
  $W$ such that $\partial W= M_K$, the inclusion induced map $H_1(M_K) \to
  H_1(W)$ is an isomorphism, and there exist embedded surfaces with trivial
  normal bundle $D_1, \dots, D_k$ and $L_1, \dots, L_k$ in $W$ such that
  \begin{enumerate}
    \item The surfaces are pairwise disjoint except for $D_j$ and $L_j$, which
    for each $j =1,\dots,k$ intersect transversely in a single point.
    \item The second homology classes represented by $D_1, \dots, D_k, L_1,
    \dots, L_k$ generate $H_2(W)$.
    \item The inclusion induced maps $\pi_1(D_i) \to \pi_1(W)$ and $\pi_1(L_i)
    \to \pi_1(W)$ have image contained in $\pi_1(W)^{(h)}$.
  \end{enumerate}
\end{defn}

This gives a filtration of the knot concordance group by subgroups
$\mathcal{F}_h$ consisting of the concordance classes of $h$-solvable knots,
explored in \cite{Cochran-Orr-Teichner:1999-1, Cochran-Orr-Teichner:2002-1,
Cochran-Teichner:2003-1, Cochran-Harvey-Leidy:2009-1}, among others. Every
$1$-solvable knot is algebraically slice and every $2$-solvable knot has
vanishing Casson-Gordon invariant sliceness obstruction. In particular, as
mentioned in the introduction, the traditional 4-genus lower bounds of
Tristram-Levine and Casson-Gordon signatures cannot be usefully employed with
$2$-solvable knots.

The satellite operation interacts particularly nicely with the solvable
filtration. We remind the reader that given a knot $R$, \emph{infection curves}
$\alpha_1, \dots, \alpha_k$ in $S^3 \ssm \nu(R)$ that form an unlink in $S^3$,
and infection knots $J_1, \dots, J_k$, the \emph{satellite of $R$ by $\{J_i\}$
along $\{\alpha_i\}$} is defined to be the image of $R$ in
\[
  \Big( S^3 \ssm \bigsqcup_{i=1}^k \nu(\alpha_i)\Big) \cup
  \bigsqcup_{i=1}^k E_{J_i} \cong S^3,
\]
where $E_{J_i}$ is the exterior of $J_i$ and the identification is made so that
a $0$-framed longitude of $\alpha_i$, denoted by $\lambda(\alpha_i)$, is
identified with a meridian of $J_i$ and vice versa. We denote this knot
by~$R_{\alpha}(J)$. The next proposition comes from
\cite[Proposition~3.1]{Cochran-Orr-Teichner:2002-1}.  We will apply it with
$h=2$ to see that the knots we construct are $2$-solvable.

\begin{prop}\label{prop:infection2solvable}
  Let $R$ be a slice knot and $\{\alpha_i\}_{i=1}^k$ be a collection of
  unknotted, unlinked curves in $S^3 \ssm R$ such that $[\alpha_i] \in
  \pi_1(M_R)^{(h)}$ for all $i=1, \dots, k$. If for each  $i=1, \dots, k$ the
  knot $J_i$ has $\Arf(J_i)=0$, then  $R_{\alpha}(J)$ is $h$-solvable.
\end{prop}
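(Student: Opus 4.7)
The plan is to construct an explicit $h$-solution $W$ for $R_\alpha(J)$ by gluing three standard pieces along their boundaries. First, since $R$ is slice, its slice disc exterior gives a compact spin $4$-manifold $V$ with $\partial V = M_R$, $H_1(V) \cong \Z$ generated by a meridian of $R$, and $H_2(V) = 0$; in particular, $V$ is vacuously an $n$-solution for every~$n$. Second, $\Arf(J_i) = 0$ implies that $J_i$ is $(0.5)$-solvable, realised by a compact spin $4$-manifold $V_i$ with $\partial V_i = M_{J_i}$, with $\pi_1(V_i)$ normally generated by the meridian $\mu_{J_i}$ of $J_i$, and with $H_2(V_i)$ generated by embedded dual pairs of surfaces $\{D^i_j, L^i_j\}$ with trivial normal bundles, where $\pi_1(D^i_j) \subseteq \pi_1(V_i)^{(1)}$. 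Third, there is a standard spin satellite cobordism $E$ from $M_R \sqcup \bigsqcup_i M_{J_i}$ to $M_{R_\alpha(J)}$ with $H_2(E) = 0$, obtained from the satellite identification $M_{R_\alpha(J)} = (M_R \ssm \bigsqcup_i \nu(\alpha_i)) \cup \bigsqcup_i E_{J_i}$, in which each $\mu_{J_i}$ is identified in $\pi_1(E)$ with the $0$-framed longitude $\lambda_{\alpha_i}$ of $\alpha_i$. Since $\alpha_i$ is unknotted in $S^3$, $\lambda_{\alpha_i}$ is freely homotopic in $M_R$ to $\alpha_i$ itself.

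Define $W := V \cup_{M_R} E \cup \bigsqcup_i V_i$, a compact spin $4$-manifold with $\partial W = M_{R_\alpha(J)}$. A Mayer--Vietoris argument, together with the fact that each piece has $H_1 \cong \Z$ generated by a meridian (and that the gluings identify these meridians compatibly), shows that $H_1(M_{R_\alpha(J)}) \to H_1(W) \cong \Z$ is an isomorphism. Since $H_2(V) = 0 = H_2(E)$, we have $H_2(W) \cong \bigoplus_i H_2(V_i)$, generated by the surfaces $\{D^i_j, L^i_j\}$, which retain their trivial normal bundles and dual geometric intersection pattern inside $W$.

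The key step is the $\pi_1$ condition. By hypothesis $[\alpha_i] \in \pi_1(M_R)^{(h)}$, and the inclusion $M_R \hookrightarrow W$ induces $\pi_1(M_R)^{(h)} \to \pi_1(W)^{(h)}$, so $[\alpha_i]$ maps into $\pi_1(W)^{(h)}$. Via $E$, the meridian $\mu_{J_i}$ is identified in $\pi_1(W)$ with a conjugate of $[\alpha_i]$. Since $\pi_1(V_i)$ is normally generated by $\mu_{J_i}$, the image of $\pi_1(V_i)$ in $\pi_1(W)$ is contained in the normal closure of $[\alpha_i] \in \pi_1(W)$; because $\pi_1(W)^{(h)}$ is a normal subgroup containing $[\alpha_i]$, this image lies inside $\pi_1(W)^{(h)}$. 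Consequently $\pi_1(L^i_j) \subseteq \pi_1(V_i)$ maps into $\pi_1(W)^{(h)}$, and $\pi_1(D^i_j) \subseteq \pi_1(V_i)^{(1)}$ maps into $[\pi_1(W)^{(h)}, \pi_1(W)^{(h)}] = \pi_1(W)^{(h+1)} \subseteq \pi_1(W)^{(h)}$, so $W$ is an $h$-solution.

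The main obstacle is the pair of technical inputs about the pieces $V_i$ and $E$: namely that $\Arf(J_i) = 0$ provides a $(0.5)$-solution $V_i$ whose fundamental group is normally generated by $\mu_{J_i}$ (arising from a surgery on the $4$-manifold obtained by attaching a $0$-framed $2$-handle to $D^4$ along $J_i$, with the Arf hypothesis used to refine the resulting intersection form), together with the careful description of the satellite cobordism $E$. Granted these, the proof distills to the group-theoretic fact that the normal closure of an element of a normal subgroup is itself contained in that subgroup.
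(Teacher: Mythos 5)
The paper itself gives no proof of this proposition; it simply cites \cite[Proposition~3.1]{Cochran-Orr-Teichner:2002-1}. Your reconstruction follows the same strategy as that reference: glue a slice disc exterior, a satellite cobordism, and solution manifolds for the $J_i$, then check the $H_1$, $H_2$ and $\pi_1$ conditions. The overall architecture is correct, and the key $\pi_1$ computation at the end (normal closure of an element of $\pi_1(W)^{(h)}$ lies in $\pi_1(W)^{(h)}$, so the images of $\pi_1(D^i_j)$ and $\pi_1(L^i_j)$ land where they should) is right. However, there are two points that need repair.

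First, the claim that the satellite cobordism $E$ has $H_2(E)=0$ is false, and this breaks the step where you conclude $H_2(W)\cong\bigoplus_i H_2(V_i)$ from $H_2(V)=0=H_2(E)$. For the standard cobordism -- built from $M_R\times[0,1]$ and $\bigsqcup_i M_{J_i}\times[0,1]$ by identifying a solid torus neighbourhood of $\alpha_i\times\{1\}$ with one of $\mu_{J_i}\times\{1\}$ -- a Mayer--Vietoris computation gives $H_1(E)\cong\Z$ and $H_2(E)\cong\Z^{k+1}$, generated by the capped Seifert surfaces of $R$ and of the $J_i$ pushed in from the incoming boundary components. (Indeed $H_2(E)=0$ is impossible here: by Poincar\'e--Lefschetz duality $H_3(E,\partial E)\cong H^1(E)\cong\Z$, which cannot surject onto $H_2(\partial E)\cong\Z^{k+2}$.) The good news is that your desired conclusion survives: since $H_1(M_R)\to H_1(V)$ and $H_1(M_{J_i})\to H_1(V_i)$ are isomorphisms, duality forces $H_2(M_R)\to H_2(V)$ and $H_2(M_{J_i})\to H_2(V_i)$ to vanish, and a two-step Mayer--Vietoris argument (first $V\cup_{M_R}E$, then attach the $V_i$) does yield $H_2(W)\cong\bigoplus_i H_2(V_i)$ with the isomorphism induced by inclusion. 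But this requires the argument just sketched; the ``since $H_2(E)=0$'' shortcut is not available.

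Second, two smaller points. The condition $\Arf(J_i)=0$ is equivalent to $J_i$ being \emph{$0$-solvable}, not $(0.5)$-solvable (it is $\mathcal{F}_{0.5}$ that coincides with the algebraically slice knots, while $\mathcal{C}/\mathcal{F}_0\cong\Z_2$ is detected by $\Arf$). Fortunately this does not damage your argument: with a $0$-solution $V_i$ you have no a priori control on $\pi_1(D^i_j)$ beyond it lying in $\pi_1(V_i)$, but the normal-closure argument already puts the entire image of $\pi_1(V_i)$ inside $\pi_1(W)^{(h)}$, which is all that Definition~\ref{defn:solvable} asks. Finally, the hypothesis that $\pi_1(V_i)$ is normally generated by $\mu_{J_i}$ is genuinely needed and is not part of the definition of a $0$-solution; you flag it as a technical input but it should be justified. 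It does hold for the standard construction, in which $V_i$ is built from $M_{J_i}\times[0,1]$ by attaching handles to the top boundary and capping off, so that $\pi_1(V_i)$ is a quotient of $\pi_1(M_{J_i})$ and therefore normally generated by a meridian.
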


While our discussions have been thus far focused on the topological category, there are  analogous notions of smooth sliceness, concordance, and 4-genera of knots. There is considerable interest in understanding the structure of $\mathcal{T}$, the collection of topologically slice knots modulo smooth concordance.
Here the $h$-solvable filtration is of no use, since every topologically slice knot lies in $\bigcap_{h=0}^{\infty} \mathcal{F}_h$.  This prompted Cochran-Harvey-Horn to define the \emph{bipolar filtration} as follows.

\begin{defn}\label{defn:positive}
  A knot $K$ is $h$-positive (respectively $h$-negative) if there exists a
  smoothly embedded disc $D$ in a smooth simply connected 4-manifold $V$ such
  that $\partial(V, D)= (S^3, K)$ and such that there exist disjointly embedded
  surfaces $S_1, \dots, S_k$ in $V \ssm \nu(D)$ which form a basis for $H_2(V)$
  such that for each $i=1, \dots k$,
  \begin{enumerate}
    \item The surface $S_i$ has $S_i \cdot S_i=+1$ (respectively $S_i \cdot
    S_i=-1$).
    \item The inclusion induced map $\pi_1(S_i) \to \pi_1(V \ssm D)$ has image
    contained in~$\pi_1(V\ssm D)^{(h)}$.
  \end{enumerate}
\end{defn}

Note that smoothly slice knots are $h$-positive for all $h \in \mathbb{N}$, that
the connected sum of two $h$-positive knots is $h$-positive, and that any knot
that can be unknotted by changing crossings from positive to negative (negative
to positive) is $0$-positive ($0$-negative)~\cite{Cochran-Harvey-Horn:2012-1}.

\begin{defn}
  We say that a knot $K$ is \emph{$h$-bipolar} if it is both $h$-positive and $h$-negative.
\end{defn}

The following proposition, inspired by \cite[Lemma 2.3]{Cha-Kim:2017}, gives us
a way to construct $h$-bipolar knots; we will apply it when $h=2$.

\begin{prop}\label{prop:buildingbipolar}
  Let $R$ be a smoothly slice knot and let $\eta^+$ and $\eta^-$ be curves in
  the complement of $R$ that form an unlink in~$S^3$.   Suppose that each
  $\eta^{\pm}$ represents a class in $\pi_1(S^3 \ssm R)^{(h)}$, and that for
  any knot $J$ we have that $R_{\eta^+}(J)$ and $R_{\eta^-}(J)$ are both
  smoothly slice. Then for any  $0$-positive knot $J^+$  and 0-negative knot
  $J^-$, the satellite knot $R_{\eta^+, \eta^-}(J^+, J^-)$ is $h$-bipolar.
\end{prop}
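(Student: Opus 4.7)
By the symmetry of the hypothesis between the $\pm$ decorations, it suffices to establish $h$-positivity; the $h$-negativity case is entirely analogous, using that $R_{\eta^+}(J^+)$ is smoothly slice and $J^-$ is $0$-negative. Since $\eta^+$ and $\eta^-$ are disjoint in $S^3$, I can rewrite $K = R_{\eta^+, \eta^-}(J^+, J^-)$ as the satellite of the smoothly slice knot $R_{\eta^-}(J^-)$ along $\eta^+$ with companion $J^+$. The proof then follows the strategy of \cite[Lemma~2.3]{Cha-Kim:2017}: I fix a smooth slice disc $D_0 \subset D^4$ for $R_{\eta^-}(J^-)$, and a simply connected smooth 4-manifold $V^+$ realizing $0$-positivity of $J^+$, with smooth disc $D^+ \subset V^+$ bounding $J^+$ and disjointly embedded basis surfaces $\{S_i^+\}_{i=1}^k \subset V^+ \ssm \nu(D^+)$ of $H_2(V^+)$ satisfying $S_i^+ \cdot S_j^+ = \delta_{ij}$.

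The plan is to use the standard satellite/infection cobordism to fuse $D^4$ and $V^+$ into a single simply connected smooth 4-manifold $V$ with $\partial V = S^3 \ni K$ and a smooth disc $D := D_0 \subset V$ bounding $K$, such that the collection $\{S_i^+\}$ remains a disjoint basis of $H_2(V) = H_2(V^+)$ with $+1$ self-intersections lying in $V \ssm \nu(D)$. Simple connectivity of $V$ will follow from Van Kampen, since both building blocks are simply connected and the gluing region is controlled; the intersection form is inherited from $V^+$, after a framing analysis to ensure that any extra homology class arising from the infection cobordism (a sphere built from discs bounding $\eta^+$ in $D^4$ and $J^+$ in $V^+$) is cancelled or absorbed into the $+1$-basis via an appropriate handle attachment.

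The heart of the proof is the verification of the derived series condition $\pi_1(S_i^+) \to \pi_1(V \ssm \nu(D))^{(h)}$. Since $V^+$ is simply connected, $\pi_1(V^+ \ssm \nu(D^+))$ is normally generated by the meridian $\mu_{J^+}$, so it suffices to show that the image of $\mu_{J^+}$ in $\pi_1(V \ssm \nu(D))$ lies in $\pi_1(V \ssm \nu(D))^{(h)}$. Under the satellite gluing, $\mu_{J^+}$ is identified with the longitude $\lambda_{\eta^+}$, which is freely homotopic to (hence conjugate in $\pi_1$ to) $\eta^+$. A general-position grope argument in the 3-dimensional space $S^3 \ssm R$ upgrades the hypothesis $\eta^+ \in \pi_1(S^3 \ssm R)^{(h)}$ to $\eta^+ \in \pi_1(S^3 \ssm R \ssm \eta^-)^{(h)}$, since a grope representing $\eta^+$ can be made disjoint from the 1-dimensional curve $\eta^-$. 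By naturality of the derived series under the composition of inclusions $S^3 \ssm R \ssm \eta^- \hookrightarrow S^3 \ssm R_{\eta^-}(J^-) \hookrightarrow V \ssm \nu(D)$, the image of $\eta^+$ lies in $\pi_1(V \ssm \nu(D))^{(h)}$, and hence so does the conjugate image of $\mu_{J^+}$. Normality of $\pi_1(V \ssm \nu(D))^{(h)}$ then propagates this containment to the full normal closure of $\mu_{J^+}$, and in particular to the image of each $\pi_1(S_i^+)$.

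The main obstacle will be carrying out the satellite cobordism carefully so that $V$ has exactly the required $+I$ intersection form on the basis $\{S_i^+\}$, managing the extra homology generator via framings and possibly a supplementary handle attachment. Once $V$ is in hand, the crucial derived-series tracking $\mu_{J^+} = \lambda_{\eta^+} \sim \eta^+$ transfers the $\pi_1^{(h)}$-hypothesis on $\eta^+$ to the required condition on the basis surfaces, completing the proof of $h$-positivity; $h$-negativity follows by the fully symmetric argument.
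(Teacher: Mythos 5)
You take the same starting decomposition as the paper, rewriting $R_{\eta^+,\eta^-}(J^+,J^-) = (R_{\eta^-}(J^-))_{\eta^+}(J^+)$, but where the paper's proof simply cites \cite[Proposition~3.3]{Cochran-Harvey-Horn:2012-1}, you attempt to re-derive that proposition by hand. Your outline of the construction of $V$ and your reduction of the derived-series condition to tracking the meridian $\mu_{J^+}\sim\lambda_{\eta^+}\sim\eta^+$ is the right plan and is the content of the cited result.

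The gap is in the step where you claim a ``general-position grope argument'' upgrades $\eta^+\in\pi_1(S^3\ssm R)^{(h)}$ to $\eta^+\in\pi_1(S^3\ssm R\ssm\eta^-)^{(h)}$, ``since a grope representing $\eta^+$ can be made disjoint from the 1-dimensional curve $\eta^-$.'' This is not a general position matter: a map of a $2$-complex into a $3$-manifold meets an embedded circle transversally in isolated points, which cannot be removed by perturbation. Tubing the grope surfaces along $\eta^-$ to avoid those points introduces new genus whose new symplectic basis curves are meridians of $\eta^-$, and those lie nowhere deep in the derived series. In fact the implication you need is false even granting that $\eta^+\cup\eta^-$ is an unlink: take $R$, $\eta^-$, $\eta^+$ to be the three components of the Borromean rings. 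Every two-component sublink is an unlink and $\eta^+$ is null-homotopic in $S^3\ssm R$, so $\eta^+\in\pi_1(S^3\ssm R)^{(h)}$ for all $h$; yet $\eta^+$ represents $[\mu_R,\mu_{\eta^-}]$ in $\pi_1(S^3\ssm R\ssm\eta^-)\cong F_2$, which lies in $F_2^{(1)}$ but not $F_2^{(2)}$. So you cannot establish $\mu_{J^+}\in\pi_1(V\ssm\nu(D))^{(h)}$ by factoring the derived-series membership of $\eta^+$ through $S^3\ssm R\ssm\eta^-$; that inclusion simply does not carry the needed depth.

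The paper does not run into this because it invokes the Cochran--Harvey--Horn proposition as a black box applied to the infected slice knot, so the $\pi_1$ bookkeeping happens inside the exterior of $R_{\eta^-}(J^-)$ (or $R_{\eta^+}(J^+)$) rather than inside $S^3\ssm R\ssm\eta^-$. If you want a self-contained argument, what must actually be proved is that $\eta^+$ has image in $\pi_1(S^3\ssm R_{\eta^-}(J^-))^{(h)}$; this is a genuine statement about how infection along a curve in the first derived subgroup interacts with the derived series of the knot group, and it must be argued algebraically, not read off from general position.
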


\begin{proof}
  Since $R_{\eta^+}(J^+)$ is slice and $J^-$ is 0-negative, the knot
  \[
    R_{\eta^+, \eta^-}(J^+, J^-)= \bigl(R_{\eta^+}(J^+)\bigr)_{\eta_-}(J^-)
  \]
  is $h$-negative by \cite[Proposition 3.3]{Cochran-Harvey-Horn:2012-1}. We see
  that
  \[
    R_{\eta^+, \eta^-}(J^+, J^-) = \bigl(R_{\eta^-}(J^-)\bigr)_{\eta_+}(J^+)
  \]
  is $h$-positive by a symmetric argument.
\end{proof}

\section{Disconnected manifolds, fundamental groups and twisted homology}\label{section:disconnected}

We will need to understand the twisted homology of a connected 4-manifold $X$
with disconnected boundary $Y$.  In this section, we establish some technical
details in this setting, for example by defining inclusion maps from the twisted
homology of $Y$ to that of $X$ and showing that there is a long exact sequence of
the homology of the pair $(X, Y)$. On a first reading we encourage the reader to
skim this section, focusing on the paragraph leading into
Definition~\ref{defn:disconnextension} and the statement of
Proposition~\ref{prop:les-pairs}. A similar discussion can be found in
\cite[Section~2.1]{FK06}.

We note once and for all that manifolds are oriented and either compact or
arising as an infinite cover of a compact manifold.  For a manifold $V$, we
write $p \colon \wt{V} \to V$ for the universal cover.

Let $Y = \bigsqcup_{i=1}^N Y_i$ be a compact $\ell$-dimensional manifold with
$N$ connected components.  Let $y_i \in Y_i$ be a basepoint for each connected
component. Let $S$ be a ring with unity and let $A$ be a left $S$-module.  A
representation $\Phi$ of the fundamental groupoid of $Y$ into $\Aut(A)$ is
equivalent to a homomorphism
\[
  \Phi = \coprod_{i=1}^N \Phi_i \colon
  \coprod_{i=1}^N \pi_1(Y_i,y_i) \to \Aut(A)
\]
from the free product of the fundamental groups of the connected components to
$\Aut(A)$. We will use the following examples.

\begin{enumerate}[(a)]

  \item Let $\Gamma$ be a group.  Then we will take $A = S= \Z\Gamma$, with
  $\Phi_i \colon \pi_1(Y_i,y_i) \to \Gamma \subseteq \Aut(A)$, where $g \in
  \Gamma$ acts on $A$ by left multiplication.  We will also take
  $A=\mathcal{N}\Gamma$, the group Von Neumann algebra of $\Gamma$, discussed in
  Section~\ref{section:L2-invariants}.

  \item  The ring $S$ is a commutative PID and $A= S^r$, together with a
  homomorphism
  \[
    \Phi_i \colon \pi_1(Y_i,y_i) \to GL_r(S) = \Aut(S^r).
  \]

\end{enumerate}

For each $i$ we use the representation $\Phi_i\colon \pi_1(Y_i,y_i) \to \Aut(A)$
to give $A$ a right $\Z[\pi_1(Y_i,y_i)]$-module structure.  Then we let
$C_*(\wt{Y}_i)$ be a cellular chain complex obtained by lifting  some CW decomposition of $Y_i$
(or a CW complex homotopy equivalent to $Y_i$ in the case that $Y_i$ is a
topological 4-manifold), and define the homology of $Y$ twisted by $\Phi$ to be
\[
  H_*(Y;A) := \bigoplus_{i=1}^N
  H_*\bigl(A \otimes_{\Z[\pi_1(Y_i,y_i)]} C_*(\wt{Y}_i)\bigr).
\]

Now suppose that $Y = \partial X$, where $X$ is a compact connected
$(\ell+1)$-dimensional manifold with $\partial X=Y = \bigsqcup_{i=1}^N Y_i$. A
schematic of a similar situation is shown in Figure~\ref{fig:diagramzz}.  Let $x
\in X$ be a basepoint and let $\tau_i \colon [0,1] \to X$ be a path from $x$ to
$y_i$. The paths $\tau_i$ induce homomorphisms $\iota_i \colon \pi_1(Y_i,y_i)
\to \pi_1(X,x)$, by $\gamma \mapsto \tau_i \gamma \overline{\tau_i}$.

\begin{defn}\label{defn:disconnextension}
  We say that $\Phi \colon \coprod_{i=1}^N \pi_1(Y_i,y_i) \to \Aut(A)$
  \emph{extends over $X$} if there is a homomorphism $\Psi \colon \pi_1(X,x) \to
  \Aut(A)$ such that $\Psi \circ \iota_i = \Phi_i$ for each $i=1,\dots,N$.
\end{defn}

Use the inclusion $j_i\colon Y_i \to X$ to define the pullback cover of $Y_i$ in
terms of  the universal cover of $X$ via the diagram:
\[
  \xymatrix{
    \wt{Y}_i^X \ar[r] \ar[d] & \wt{X} \ar[d]^p \\
    Y_i \ar[r]_{j_i} & X
  }
\]
The pullback $\wt{Y}_i^X$ is given by pairs $\{(y, \widetilde{x}) \in Y_i \times
\wt{X} \mid j_i(y) = p(\widetilde{x})\}.$  Apply the action of the group
$\pi_1(X,x)$ on $\wt{X}$ to the second factor to obtain an action of
$\pi_1(X,x)$ on $\wt{Y}_i^X$. This is defined since the action on $\wt{X}$ is
equivariant with respect to~$p$. The action of $\pi_1(X,x)$ on $\wt{Y}_i^X$
induces an action of $\Z[\pi_1(X,x)]$ on the chain complex $C_*(\wt{Y}_i^X)$.

\begin{lem}\label{lemma:pullback-covers}
  We have a homeomorphism
  \[
    \pi_1(X,x) \times_{\pi_1(Y,y_i)} \wt{Y}_i \cong \wt{Y}_i^X,
  \]
  where by definition the left hand side means:
  \[
    \pi_1(X, x) \times \wt{Y_i}/ \big((\gamma, \wt{y}) \sim (\gamma', \wt{y}')
    \text{ if there is } g \in \pi_1(Y, y_i) \text{ such that }
    \gamma  \iota_i(g) = \gamma' \text{ and } g \cdot \wt{y}= \wt{y}'\big).
  \]
\end{lem}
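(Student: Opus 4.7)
The plan is to construct an explicit continuous map from the quotient $\pi_1(X,x) \times_{\pi_1(Y,y_i)} \wt{Y}_i$ to the pullback $\wt{Y}_i^X$ and verify that it is a homeomorphism of covering spaces of $Y_i$. The key ingredient is a distinguished lift of $j_i$ chosen using the auxiliary path $\tau_i$.

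First I would use $\tau_i$ to select a canonical lift $\wt{j}_i \colon \wt{Y}_i \to \wt{X}$ of $j_i$, characterized by sending the basepoint $\wt{y}_i \in \wt{Y}_i$ (corresponding to the constant path at $y_i$) to the endpoint of the lift of $\tau_i$ to $\wt{X}$ starting at $\wt{x}$. Standard covering theory guarantees both the existence and uniqueness of this lift, as well as its $\iota_i$-equivariance, which intertwines the deck action of $\pi_1(Y_i, y_i)$ on $\wt{Y}_i$ with the action of $\pi_1(Y_i, y_i)$ on $\wt{X}$ given by $g \cdot \wt{x} := \iota_i(g) \cdot \wt{x}$. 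Next, define
\[
  \Theta \colon \pi_1(X, x) \times \wt{Y}_i \to Y_i \times \wt{X}, \qquad
  \Theta(\gamma, \wt{y}) := \bigl( p_i(\wt{y}), \; \gamma \cdot \wt{j}_i(\wt{y}) \bigr).
\]
Since $p(\wt{j}_i(\wt{y})) = j_i(p_i(\wt{y}))$, the image of $\Theta$ lies in the pullback $\wt{Y}_i^X$. A short calculation using the equivariance of $\wt{j}_i$ shows that $\Theta(\gamma \iota_i(g), g \cdot \wt{y}) = \Theta(\gamma, \wt{y})$, so $\Theta$ descends to a continuous map $\overline{\Theta}$ on the quotient.

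Finally I would show $\overline{\Theta}$ is a bijection and hence a homeomorphism. Surjectivity uses that each fiber $p^{-1}(j_i(y))$ is a free transitive $\pi_1(X, x)$-torsor, so given any $(y, \wt{x}) \in \wt{Y}_i^X$ and a choice of $\wt{y}_0 \in p_i^{-1}(y)$ there is a unique $\gamma \in \pi_1(X, x)$ with $\gamma \cdot \wt{j}_i(\wt{y}_0) = \wt{x}$. Injectivity reduces to showing that two preimages of the same point necessarily differ by the stated equivalence relation, a direct consequence of equivariance and freeness of the deck actions. Since both the source and target of $\overline{\Theta}$ are covering spaces of $Y_i$ and $\overline{\Theta}$ commutes with the projections to $Y_i$, a continuous bijection over $Y_i$ is automatically a homeomorphism. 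The main subtlety is the invariance computation in the previous paragraph: one must keep careful track of the conventions for the deck actions, since the specific form of the equivalence relation in the statement is dictated by the authors' convention for $g \cdot \wt{y}$, which in turn determines the equivariance formula for $\wt{j}_i$ needed to make the calculation balance.
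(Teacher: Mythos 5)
Your approach is genuinely different from the paper's, and is a valid alternative route. The paper argues abstractly: it views $\wt Y_i \to Y_i$ as a principal $\pi_1(Y_i,y_i)$-bundle, applies the change-of-fibre (balanced product) construction to get a covering of $Y_i$ with fibre $\pi_1(X,x)$, and then observes that both $\pi_1(X,x)\times_{\pi_1(Y_i,y_i)}\wt Y_i$ and $\wt Y_i^X$ are coverings of $Y_i$ with the same monodromy data (that determined by $\iota_i$), so they are homeomorphic by the classification of covering spaces. You instead construct the homeomorphism directly: fix the lift $\wt j_i$ of $j_i$ using $\tau_i$, define $\Theta$ by $(\gamma,\wt y)\mapsto(p_i(\wt y),\gamma\cdot\wt j_i(\wt y))$, and check descent, bijectivity, and the automatic-homeomorphism fact for maps of coverings over a common base. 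Your version is more self-contained and exhibits the map concretely; the paper's is shorter but invokes the correspondence between coverings and monodromy actions.

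One point needs to be tightened, however, and it is not purely a matter of ``keeping track'': the two formulas you state are mutually inconsistent as written. You assert the equivariance $\wt j_i(g\cdot\wt y)=\iota_i(g)\cdot\wt j_i(\wt y)$, and then claim $\Theta(\gamma\iota_i(g),g\cdot\wt y)=\Theta(\gamma,\wt y)$. Substituting the first into the second yields $\bigl(p_i(\wt y),\gamma\,\iota_i(g)\,\iota_i(g)\cdot\wt j_i(\wt y)\bigr)$, which is not $\bigl(p_i(\wt y),\gamma\cdot\wt j_i(\wt y)\bigr)$ unless $\iota_i(g)^2=1$. The computation balances against the paper's stated relation only if the equivariance of $\wt j_i$ reads $\wt j_i(g\cdot\wt y)=\iota_i(g)^{-1}\cdot\wt j_i(\wt y)$, which is the correct formula when ``$g\cdot\wt y$'' is the monodromy action on fibres rather than the deck-transformation action defined by path concatenation at the source. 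You correctly anticipate that the deck-action convention determines the outcome, but since you commit to a specific equivariance formula that does not match the invariance you need, the proposal as written has a sign discrepancy that must actually be resolved (by fixing the convention and rederiving the equivariance) rather than merely flagged.
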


\begin{proof}
  Start with the covering space $\pi_1(Y_i,y_i) \to \wt{Y}_i \to Y_i$ with fibre
  $\pi_1(Y_i,y_i)$, and then apply the `product over $\pi_1(Y_i,y_i)$'
  construction to obtain a covering space $\pi_1(X,x) \times_{\pi_1(Y_i,y_i)}
  \pi_1(Y_i,y_i) \to \pi_1(X,x) \times_{\pi_1(Y_i,y_i)} \wt{Y_i} \to Y_i.$ Since
  $\pi_1(X,x) \times_{\pi_1(Y_i,y_i)} \pi_1(Y_i,y_i) \cong \pi_1(X,x)$ as
  discrete spaces and affine sets over $\pi_1(X,x)$, this fibre bundle is
  homeomorphic to
  \[
    \pi_1(X,x) \to \pi_1(X,x) \times_{\pi_1(Y_i,y_i)} \wt{Y_i} \to Y_i.
  \]
  Since both $\pi_1(X,x) \times_{\pi_1(Y_i,y_i)} \wt{Y_i}$ and $\wt{Y}^X$ are
  covering spaces of $Y_i$ corresponding to the homeomorphism $\iota_i \colon
  \pi_1(Y_i,y_i) \to \pi_1(X,x)$, they are homeomorphic by the classification of
  covering spaces.
\end{proof}

It follows from Lemma~\ref{lemma:pullback-covers} that we have an chain
isomorphism $\Z[\pi_1(X,x)] \otimes_{\Z[\pi_1(Y_i,y_i)]}  C_*(\wt{Y}_i) \cong
C_*(\wt{Y}_i^X)$. Consider the sequence of chain maps:
\begin{align*}
  A \otimes_{\Z[\pi_1(Y_i,y_i)]} C_*(\wt{Y}_i) & \xrightarrow{\cong}
  A \otimes_{\Z[\pi_1(X,x)]} \Z[\pi_1(X,x)] \otimes_{\Z[\pi_1(Y_i,y_i)]}
  C_*(\wt{Y}_i)
  \\
  & \xrightarrow{\cong}
  A \otimes_{\Z[\pi_1(X,x)]} C_*(\wt{Y}^X_i)
  \\
  & \xrightarrow{\hphantom{\cong}}
  A \otimes_{\Z[\pi_1(X,x)]} C_*(\wt{X}).
\end{align*}
The first map sends $a \otimes c \mapsto a \otimes 1 \otimes c$. The second map
uses the isomorphism discussed above, and the third map is induced by~$j_i$.

This chain level map induces a map on homology $(j_i)_*\colon H_*(Y_i;A) \to
H_*(X;A)$, which in turn induces
\[
  \bigoplus_{i=1}^N (j_i)_* \colon \bigoplus_{i=1}^N H_*(Y_i;A)
  \cong H_*(Y;A) \to H_*(X;A).
\]
Let $\wt{Y}^X := \bigsqcup_{i=1}^N \wt{Y}_i^X$. Then by identifying $\wt{Y}^X$
with its image in $\wt{X}$ we also have relative twisted homology groups
\[
  H_*(X,Y;A) := H_*(A \otimes_{\Z[\pi_1(X,x)]} C_*(\wt{X},\wt{Y}^X)).
\]
The chain maps above fit into a short exact sequence of chain complexes
\[
  0 \to \bigoplus_{i=1}^N A \otimes_{\Z[\pi_1(Y_i,y_i)]} C_*(\wt{Y}_i)
  \to A \otimes_{\Z[\pi_1(X,x)]} C_*(\wt{X})
  \to  A \otimes_{\Z[\pi_1(X,x)]} C_*(\wt{X},\wt{Y}^X) \to 0.
\]
That this is exact follows from the chain isomorphism
\[
  A \otimes_{\Z[\pi_1(Y_i,y_i)]} C_*(\wt{Y}_i)
  \cong A \otimes_{\Z[\pi_1(X,x)]} C_*(\wt{Y}^X_i).
\]
The short exact sequence of chain complexes gives rise to a long exact sequence
in homology, which we record in the next proposition.

\begin{prop}\label{prop:les-pairs}
  With a fixed choice of paths $\{\tau_i\}$ and a representation $\Phi\colon
  \coprod_{i=1}^n \pi_1(Y_i, y_i) \to \Aut(A)$ that extends over $X$, there is a
  long exact sequence in twisted homology
  \[
    \cdots \to H_k(Y;A) \to H_k(X;A) \to H_k(X,Y;A) \to H_{k-1}(Y;A) \to \cdots
  \]
  with $H_k(Y;A) \to H_k(X;A)$ the inclusion induced map discussed above.
\end{prop}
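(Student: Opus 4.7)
The plan is to apply the standard zig-zag (snake) lemma to the short exact sequence of chain complexes already written down just before the proposition, namely
\[
  0 \to \bigoplus_{i=1}^N A \otimes_{\Z[\pi_1(Y_i,y_i)]} C_*(\wt{Y}_i)
  \to A \otimes_{\Z[\pi_1(X,x)]} C_*(\wt{X})
  \to A \otimes_{\Z[\pi_1(X,x)]} C_*(\wt{X},\wt{Y}^X) \to 0.
\]
The bulk of the work is to verify exactness of this sequence of chain complexes; once this is done, the long exact sequence in homology is immediate, and the identification of the first connecting map with the direct sum of inclusion-induced maps $(j_i)_*$ follows because the chain-level map into $A \otimes_{\Z[\pi_1(X,x)]} C_*(\wt{X})$ is, by construction, the very composition used in the paragraph preceding the proposition to define~$(j_i)_*$.

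To verify exactness, first I would choose CW decompositions of $X$ and $Y$ (or CW complexes homotopy equivalent to them in the topological category) so that each $Y_i$ appears as a subcomplex of~$X$. Lifting to~$\wt{X}$ gives a free $\Z[\pi_1(X,x)]$-basis for $C_k(\wt{X})$ consisting of one lift per cell of $X$. Partitioning the cells of~$X$ into those in $Y$ and those not in~$Y$ induces a degreewise direct sum decomposition
\[
  C_k(\wt{X}) \cong C_k(\wt{Y}^X) \oplus C_k(\wt{X},\wt{Y}^X)
\]
as $\Z[\pi_1(X,x)]$-modules (not as chain complexes). Applying $A \otimes_{\Z[\pi_1(X,x)]}(-)$ to this direct sum preserves the decomposition, so in each degree the sequence of abelian groups is split short exact; combined with the chain isomorphism
\[
  A \otimes_{\Z[\pi_1(Y_i,y_i)]} C_*(\wt{Y}_i)
  \cong A \otimes_{\Z[\pi_1(X,x)]} C_*(\wt{Y}^X_i)
\]
deduced from Lemma~\ref{lemma:pullback-covers} and associativity of the tensor product, this gives the desired short exact sequence of chain complexes. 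Crucially, this argument bypasses any flatness hypothesis on $A$ over $\Z[\pi_1(X,x)]$, since the degreewise splitting exists for purely cellular reasons; this is the one subtle point, since $A \otimes_{\Z[\pi_1(X,x)]}(-)$ is not in general left exact.

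Finally I would invoke the zig-zag lemma to extract the long exact sequence in homology. The map $H_k(Y;A) \to H_k(X;A)$ induced by the left-hand map of the short exact sequence agrees on the nose with the $(j_i)_*$ assembled in the paragraph preceding the proposition, because both are induced by the same three-step composition (inserting $1 \otimes -$, applying the chain isomorphism from Lemma~\ref{lemma:pullback-covers}, and then including $\wt{Y}_i^X \hookrightarrow \wt{X}$). The hypothesis that $\Phi$ extends to a representation $\Psi\colon\pi_1(X,x)\to\Aut(A)$ is used here only to give $A$ the $\Z[\pi_1(X,x)]$-module structure required to form the middle and right terms of the short exact sequence; without such an extension, the target $H_k(X;A)$ would not even be defined.
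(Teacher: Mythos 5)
Your proposal is correct and takes essentially the same approach as the paper: apply the chain isomorphism from Lemma~\ref{lemma:pullback-covers} to rewrite the left-hand term, observe that the resulting short exact sequence of chain complexes is exact, and invoke the zig-zag lemma. The paper simply asserts exactness "follows from the chain isomorphism," whereas you also spell out the degreewise-split (cells-of-$Y$ versus cells-of-$X\ssm Y$) justification that tensoring with $A$ preserves exactness; this is a welcome clarification of a step the paper leaves implicit, but it is not a different route.
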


In later sections  we work with many different representations of a given
fundamental group(oid), and so we emphasize the representation $\Phi$ rather
than the module $A$ by writing $H_k^\Phi(Y)$ for $H_k(Y; A)$.

\section{$L^{(2)}$-signature invariants}\label{section:L2-invariants}

In this section we introduce the Von Neumann $L^{(2)}$ $\rho$-invariant of a
closed (not necessarily connected) 3-manifold equipped with a representation of
its fundamental group or groupoid, and we recall the key properties of this
invariant required for the proof of Theorem~\ref{thm:mainthm-intro}.  In
particular, we review the Cheeger-Gromov bound, a satellite formula, and an
upper bound in terms of the second Betti number of a bounding $4$-manifold.

\begin{defn}\label{defn:rho-inv}
  Let $Y$ be a closed oriented 3-manifold, let $\Gamma$ be a discrete group, and
  let $\phi \colon \pi_1(Y) \to \Gamma$ be a representation. Note that $Y$ might
  be disconnected, in which case we use the conventions of
  Section~\ref{section:disconnected}. Suppose that $\phi$ extends to $\Phi\colon
  \pi_1(W) \to \Gamma$ where $W$ is a compact oriented 4-manifold with $\partial
  W = Y$. The \emph{von Neumann $L^{(2)}$ $\rho$-invariant} of $(Y, \phi)$ is
  the signature defect
  \[
    \rho^{(2)}(Y,\phi) = \sigma^{(2)}_\Gamma(W, \Phi)- \sigma(W),
    \]
  where $\sigma^{(2)}_\Gamma(W, \Phi)$ is the $L^{(2)}$-signature of the
  intersection form  $\lambda_{\Gamma} \colon H_2(W, \mathcal{N}\Gamma) \times
  H_2(W, \mathcal{N}\Gamma) \to \mathcal{N}\Gamma$ and $\sigma(W)$ is the
  ordinary signature of the intersection form on $H_2(W;\Q)$.  Here the
  $L^{(2)}$-signature is defined via the completion $\Z\Gamma \to
  \mathbb{C}\Gamma \to \mathcal{N}\Gamma$ to the Von Neumann algebra, and the
  spectral theory of operators on $\mathcal{N}\Gamma$-modules. We refer to
  \cite[Section~5]{Cochran-Orr-Teichner:1999-1} and
  \cite[Section~3.1]{Cha:2014-1} for more details. In particular,
  $\rho^{(2)}(Y,\phi)$ only depends on the pair $(Y,\phi)$  since both the
  $L^{(2)}$ signature and the ordinary signature satisfy Novikov additivity and
  also $\sigma^{(2)}_\Gamma(V, \Phi)= \sigma(V)$ for a \emph{closed}
  4-manifold~$V$. (See~\cite[p.~323]{Chang-Weinberger:2003-1}
  and~\cite[Lemma~5.9]{Cochran-Orr-Teichner:1999-1}.)
\end{defn}

This invariant was originally defined by Cheeger and Gromov via Riemannian
geometry and $\eta$-invariants, independently of any bounding 4-manifold, so the
above definition could be taken as a proposition that the two definitions
coincide. For our purposes, as is common in the knot concordance literature, it
is simpler to take the above as the definition; for a discussion, see
\cite[Section~5]{Cochran-Orr-Teichner:1999-1} and
\cite{Cochran-Teichner:2003-1}.

\begin{exl}\label{example:abelian-rep-integral}
  Let $M_J$ be the zero-framed surgery manifold of a knot $J \subset S^3$ and
  let $\phi \colon \pi_1(M_J) \to \Z$ be the abelianization map. Then
  \[\rho_0(J):= \rho^{(2)}(M_J,\Z) = \int_{\omega \in S^1} \sigma_{\omega}(J)
  \,d \omega,\] where $\sigma_{\omega}(J)$ is the Tristram-Levine signature of
  $J$ at $\omega \in S^1$, that is the signature of $(1-\omega)V +
  \overline{(1-\omega)}V^T$ for $V$ a Seifert matrix of~$J$.  See
  \cite[Lemma~5.4]{Cochran-Orr-Teichner:1999-1} for the proof.
\end{exl}

We will need the following theorem of Cheeger and Gromov, establishing a
universal bound for the $\rho$-invariants of a fixed closed 3-manifold $Y$.

\begin{thm}[\cite{Cheeger-Gromov:1985-1}]\label{thm:cheeger-gromov-thm}
  Let $Y$ be a closed oriented 3-manifold. Then there exists a constant $C$ such
  that $ | \rho^{(2)}(Y, \phi) | \leq C$ for any discrete group $\Gamma$ and any
  representation $\phi \colon \pi_1(Y) \to \Gamma$.
\end{thm}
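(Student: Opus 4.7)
The plan is to use the topological definition of $\rho^{(2)}$ via bounding $4$-manifolds, combined with an amalgamation trick and the induction property of $\rho^{(2)}$ under injective group homomorphisms, so that the constant $C$ can be expressed in terms of a single auxiliary $4$-manifold $W_0$ with $\partial W_0 = Y$.

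First I would fix a null-bordism: since the oriented $3$-dimensional bordism group vanishes, there exists a compact oriented $4$-manifold $W_0$ with $\partial W_0 = Y$. By attaching $2$-handles along generators of $\ker(\pi_1(Y) \to \pi_1(W_0))$ and then $3$-handles as necessary, one may further assume that $\pi_1(Y) \to \pi_1(W_0)$ is injective, altering $b_2(W_0)$ by an amount depending only on $Y$.

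Next, given any representation $\phi \colon \pi_1(Y) \to \Gamma$, form the amalgamated free product
\[
  \wt\Gamma := \pi_1(W_0) *_{\pi_1(Y)} \Gamma,
\]
using the injection $\pi_1(Y) \hookrightarrow \pi_1(W_0)$ and $\phi$. By the classical theory of amalgamated products, the natural map $\iota \colon \Gamma \to \wt\Gamma$ is injective, and the composition $\iota \circ \phi$ extends to a representation $\Phi \colon \pi_1(W_0) \to \wt\Gamma$. The induction property of $\rho^{(2)}$ under injective homomorphisms (a standard consequence of the compatibility of von Neumann traces under subgroup inclusions; see for instance Section~5 of~\cite{Cochran-Orr-Teichner:1999-1}) yields $\rho^{(2)}(Y, \phi) = \rho^{(2)}(Y, \iota \circ \phi)$, and using $W_0$ as the bounding $4$-manifold gives
\[
  \rho^{(2)}(Y, \phi) = \sigma^{(2)}_{\wt\Gamma}(W_0, \Phi) - \sigma(W_0).
\]

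The proof concludes by bounding each term uniformly: $|\sigma(W_0)| \leq b_2(W_0)$ is elementary, and $|\sigma^{(2)}_{\wt\Gamma}(W_0, \Phi)| \leq b_2(W_0)$ is a standard estimate, following from the fact that the $L^{(2)}$-signature of a hermitian form on an $\mathcal{N}\wt\Gamma$-module is bounded by the $\mathcal{N}\wt\Gamma$-dimension of that module, and in turn by the ordinary rational second Betti number of $W_0$. Setting $C := 2b_2(W_0)$ then yields a constant depending only on $Y$. The main obstacle I anticipate is justifying the induction property of $\rho^{(2)}$ along the amalgamated inclusion $\iota$, which hinges on its injectivity and on the fact that the canonical trace on $\mathcal{N}\wt\Gamma$ restricts to that on $\mathcal{N}\Gamma$; an alternative and more classical route is the original analytic proof of Cheeger and Gromov via heat kernel asymptotics for the flat Hilbert bundle associated to $\phi$, where the short-time heat kernel expansion is local in $Y$ and the monodromy essentially drops out after dividing by the fiber dimension, giving a bound that is uniform in the representation.
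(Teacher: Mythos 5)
The paper does not prove this theorem: it is stated as a citation to \cite{Cheeger-Gromov:1985-1}, and the surrounding text notes that a proof using the signature-defect definition (as you are attempting) was given by \cite{Cha:2016-CG-bounds}, with explicit bounds in terms of triangulation complexity. Your proposal is in the same topological spirit as Cha's argument, but it contains a concrete gap that makes the main construction fail as written.

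The problematic step is the assertion that ``by attaching $2$-handles along generators of $\ker(\pi_1(Y) \to \pi_1(W_0))$ and then $3$-handles as necessary, one may further assume that $\pi_1(Y) \to \pi_1(W_0)$ is injective.'' This does not work. The elements of $\ker(\pi_1(Y)\to\pi_1(W_0))$ already map to $1\in\pi_1(W_0)$, so attaching a $2$-handle in the interior of $W_0$ along them changes nothing; attaching $2$-handles along other interior curves only kills more of $\pi_1(W_0)$ and hence can only \emph{enlarge} the kernel of $\pi_1(Y)\to\pi_1(W_0)$; and attaching handles along curves in $\partial W_0$ would change the boundary, which must remain $Y$. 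What handle moves let you arrange by this method is \emph{surjectivity} of $\pi_1(Y)\to\pi_1(W_0)$, not injectivity. There is no general construction of a compact $W_0$ with $\partial W_0 = Y$ and $\pi_1(Y)\hookrightarrow\pi_1(W_0)$, and without injectivity your amalgamation $\widetilde\Gamma := \pi_1(W_0) *_{\pi_1(Y)}\Gamma$ is merely a pushout, not a classical amalgamated free product.

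This feeds directly into a second unjustified claim: you invoke ``the classical theory of amalgamated products'' to conclude $\Gamma\to\widetilde\Gamma$ is injective. The classical Schreier/Bass--Serre result that the factors embed requires \emph{both} structural maps from the amalgamating subgroup to be injective; here $\phi\colon\pi_1(Y)\to\Gamma$ is an arbitrary representation and is typically far from injective. When only one leg is injective the pushout is $\bigl(\pi_1(W_0)/\langle\!\langle i(\ker\phi)\rangle\!\rangle\bigr) *_{\bar C} \Gamma$ for the induced $\bar C$, and the kernel of $\bar C\to\Gamma$ can be nontrivial, so the argument becomes circular without further control. Since $L^{(2)}$-induction is only valid along injective inclusions, this is where the bound $\rho^{(2)}(Y,\phi)=\rho^{(2)}(Y,\iota\circ\phi)$ would break down. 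Cha's actual topological proof avoids fixing a single $W_0$: it builds, for each $\phi$, a bounding $4$-manifold of \emph{uniformly controlled} complexity over a suitably enlarged group, using explicit controlled chain-level constructions rather than an amalgamation over a fixed null-bordism; that extra work is exactly what makes the uniform bound come out. Your closing paragraph correctly recalls that the original Cheeger--Gromov proof is analytic (bounded geometry and $L^{(2)}$ $\eta$-invariants), but as written that sketch is too coarse to substitute for either argument.
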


We will refer to the infimum of all such constants $C$ as the Cheeger-Gromov
constant of $Y$, denoted $C(Y)$. We note that \cite{Cha:2016-CG-bounds} has
given a proof of Theorem~\ref{thm:cheeger-gromov-thm} using the signature defect
definition of $\rho^{(2)}(Y,\phi)$ given above, and has given explicit bounds
for $C(Y)$ in terms of the triangulation complexity of~$Y$.

The following proposition comes from~\cite{Cochran-Harvey-Leidy:2009-1}.

\begin{prop}\label{prop:additivity}
  Let $K= R_{\alpha}(J)$ be the result of a satellite operation on a knot $R$ by
  infection knots $\{J_k\}$ along infection curves $\{\alpha_k\}$. Let $\phi
  \colon \pi_1(M_K) \to \Gamma$, and suppose that  for some $h \in \mathbb{N}$
  we have $\alpha_k \in \pi_1(M_R)^{(h)}$ for all $k$ and  $\Gamma^{(h+1)}=1$.
  Suppose that for all $k$, either $\phi(\alpha_k) = 1$ or $\phi(\alpha_k)$ is
  infinite order in $\Gamma$. Then the restriction induced maps $\pi_1(M_R \ssm
  \bigsqcup  \nu(\alpha_k)) \to \Gamma$ and $\pi_1(E_{J_k}) \to \Gamma$ extend
  uniquely to  $\phi_0 \colon \pi_1(M_R) \to \Gamma$ and $\phi_k\colon
  \pi_1(M_{J_k}) \to \Gamma$ and we have
  \begin{align*}
    \rho^{(2)}(M_K, \phi)&= \rho^{(2)}(M_R, \phi_0)+
    \sum_{k} \rho^{(2)}(M_{J_k}, \phi_k).
  \end{align*}
\end{prop}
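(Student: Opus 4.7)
My plan is to prove the additivity formula via a standard 4-dimensional cobordism argument. Specifically, construct a compact 4-manifold $V$ with
\[
\partial V = M_K \sqcup (-M_R) \sqcup \bigsqcup_k (-M_{J_k}),
\]
extend $\phi$ to a representation $\Phi \colon \pi_1(V) \to \Gamma$ whose restrictions to the boundary components are $\phi$, $\phi_0$, and $\phi_k$, and verify that $\sigma(V) = 0$ and $\sigma^{(2)}_\Gamma(V, \Phi) = 0$. The formula then follows from $\rho^{(2)}(\partial V, \Phi|_{\partial V}) = \sigma^{(2)}_\Gamma(V, \Phi) - \sigma(V) = 0$ combined with Novikov additivity of both signatures over disjoint boundary components.

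The cobordism $V$ is built from the satellite operation itself: start with $M_R \times [0,1]$, and for each $k$ attach $E_{J_k} \times [0,1]$ along the torus neighborhood $\partial \nu(\alpha_k) \times [0,1] \subset M_R \times \{1\}$ using the satellite identifications $\mu(\alpha_k) \leftrightarrow \lambda(J_k)$, $\lambda(\alpha_k) \leftrightarrow \mu(J_k)$. The remaining meridional solid torus boundaries inside $M_{J_k} \times \{1\}$ are capped by $0$-framed $2$-handles to produce the boundary components $-M_{J_k}$, while the opposite end is identified with $M_K$.

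For the extension: the group $\pi_1(V)$ is obtained from $\pi_1(M_K)$ by imposing the additional relations $\lambda(J_k) = 1$ coming from the capping $2$-handles, so an extension of $\phi$ exists and is unique precisely when $\phi(\lambda(J_k)) = 1$ for all $k$. To establish this vanishing I would argue that $\phi$ restricted to $\pi_1(E_{J_k})$ has abelian image: since $\pi_1(E_{J_k})$ is normally generated by $\mu(J_k)$ and since $\phi(\mu(J_k)) = \phi(\lambda(\alpha_k))$ can be shown to lie in the abelian subgroup $\Gamma^{(h)}$ (abelian because $\Gamma^{(h+1)} = 1$) using the derived-series hypothesis $\alpha_k \in \pi_1(M_R)^{(h)}$ together with the trivial/infinite-order dichotomy on $\phi(\alpha_k)$, the full image of $\pi_1(E_{J_k})$ lies in $\Gamma^{(h)}$. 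Since $\lambda(J_k) \in \pi_1(E_{J_k})^{(1)}$ is a product of commutators, abelianness of the image forces $\phi(\lambda(J_k)) = 1$; the maps $\phi_0$ and $\phi_k$ are then uniquely determined by the gluing data.

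Finally, $\sigma(V) = 0$ follows from a Mayer--Vietoris computation showing that $H_2(V; \mathbb{Q})$ is carried by the gluing tori $\partial \nu(\alpha_k) \times [0,1]$, which have trivial self-intersection. The main obstacle I expect is the parallel computation that $\sigma^{(2)}_\Gamma(V, \Phi) = 0$: here the infinite-order hypothesis on $\phi(\alpha_k)$ is essential, since in that case $1 - \phi(\alpha_k) \in \mathcal{N}\Gamma$ is a weak isomorphism, making the corresponding torus contribution to $H_2(V; \mathcal{N}\Gamma)$ vanish in the $L^{(2)}$ sense, while the case $\phi(\alpha_k) = 1$ is handled by a direct argument. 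With both signatures zero, Novikov additivity yields $\rho^{(2)}(M_K,\phi) - \rho^{(2)}(M_R, \phi_0) - \sum_k \rho^{(2)}(M_{J_k}, \phi_k) = 0$, completing the proof.
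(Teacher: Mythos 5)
Your cobordism argument reconstructs the proof of \cite[Lemma~2.3]{Cochran-Harvey-Leidy:2009-1}, which the paper simply cites, observing that the PTFA hypothesis there can be weakened to the assumption that each $\phi(\alpha_k)$ is trivial or of infinite order since this still forces $H_1(\alpha_k; \Z\Gamma) = 0$ in the Mayer--Vietoris/$L^{(2)}$-signature step. Your proposal captures this same key point via the weak-isomorphism observation and is therefore essentially the same approach as the paper's; one small inaccuracy is that the trivial/infinite-order dichotomy plays no role in establishing $\phi(\lambda(\alpha_k)) \in \Gamma^{(h)}$, which follows from $\alpha_k \in \pi_1(M_R)^{(h)}$ alone (the satellite construction does not lower the level of $\lambda(\alpha_k)$ in the derived series of $\pi_1(M_K)$).
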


\begin{proof}
  The proof of \cite[Lemma~2.3]{Cochran-Harvey-Leidy:2009-1} applies, with the
  following modification. The original statement of this proposition assumes the
  additional hypothesis that $\Gamma$ is a poly-torsion-free-abelian (PTFA)
  group. However, an inspection of the proof shows that we need only assume that
  for each $k$ either $\phi(\alpha_k) = 1 \in \Gamma$ or $\phi(\alpha_k)$ is
  infinite order.  In the case that $\phi(\alpha_k) \neq 1$ in $\Gamma$, they
  need in the proof of \cite[Lemma~2.3]{Cochran-Harvey-Leidy:2009-1} that
  $H_1(\alpha_k;\Z\Gamma) =0$.  But since $\phi(\alpha_k)$ is infinite order,
  $H_1(\alpha_k;\Z\Gamma)$ is the first homology of $\mathbb{R} \times
  \Gamma/\langle \alpha_k \rangle$, which vanishes.
\end{proof}

We will apply Proposition~\ref{prop:additivity} when $\Gamma= G/ G^{(3)}_{(\Q,
\Z_p, \Q)}$ for some group $G$ and $h=2$. For such $\Gamma$, any curves
$\alpha_k \in \pi_1(M_R)^{(2)}$ satisfy the hypothesis of the proposition, since
then $\phi(\alpha_k) \in \Gamma^{(2)}$ and $\Gamma^{(2)}/ \Gamma^{(3)}$ is
torsion-free.

Under the assumptions of Proposition~\ref{prop:additivity}, we have that the map
$\phi_k \colon \pi_1(M_{J_k}) \to \Gamma$ factors through the abelianization
map. To see this, note that each meridian of $J_k$ is identified with a
longitude of $\alpha_k$, which lies in $\pi_1(M_R)^{(h)}$ and hence is sent to
$\Gamma^{(h)}$. So the image of $\phi_k\colon \pi_1(M_{J^k})\to \Gamma$ is
contained in $\Gamma^{(h)}$, which is an abelian group since $\Gamma^{(h+1)}=1$.
When $\Gamma^{(h)}/ \Gamma^{(h+1)}$ is torsion-free, as occurs when $\Gamma= G/
G^{(3)}_{(\Q, \Z_p, \Q)}$ and $h=2$, we therefore have that $\phi_k$ is either
the zero map or maps onto a copy of $\Z$ in $\Gamma$. By the principle of
$L^{(2)}$-induction \cite[Proposition~5.13]{Cochran-Orr-Teichner:1999-1} and
Example~\ref{example:abelian-rep-integral}, we have that

\begin{align*}
  \rho^{(2)}(M_{J_k}, \phi_k)= \begin{cases}
  \rho_0(J_k) & \text{ if } \phi_k \neq 0 \\
  0 & \text{ if } \phi_k =0.
  \end{cases}
\end{align*}
Finally, note that since a meridian of $J_k$ is identified with a longitude
$\lambda(\alpha_k)$ of $\alpha_k$ in $M_K$, we have that $\phi_k$ is the zero
map if and only if $\phi(\lambda(\alpha_k))=0$. We summarize the results of the
above discussion for later use.

\begin{prop}\label{prop:ouradditivity}
  Let $K=R_{\alpha}(J)$ be the result of a satellite operation on $R$ by
  infection knots $\{J_k\}$ along infection curves $\{\alpha_k\}$ lying
  in~$\pi_1(M_R)^{(2)}$. Let $\Gamma= G/ G_{(\Q, \Z_p, \Q)}^{(3)}$ for some
  group $G$ and prime~$p$, and let $\phi\colon \pi_1(M_K) \to \Gamma$. Then the
  restriction induced maps $\pi_1(M_R \ssm \bigsqcup \nu(\alpha_k)) \to \Gamma$
  and $\pi_1(E_{J_k}) \to \Gamma$ extend uniquely to maps $\phi_0\colon
  \pi_1(M_R) \to \Gamma$ and $\phi_k\colon \pi_1(M_{J_k}) \to \Gamma$. Moreover,
  \[
    \rho^{(2)}(M_K, \phi) = \rho^{(2)}(M_R, \psi_0) +
    \sum_k \rho^{(2)}(M_{J_k}, \phi_k) =\rho^{(2)}(M_R, \phi_0) +
    \sum_k \delta_k(\psi) \rho_0(J_k),
  \]
  where
  \[
    \delta_k(\psi)= \begin{cases} 0  &\text{ if }  \psi(\lambda(\alpha_k))=0 \\
    1 &\text{ if }  \psi(\lambda(\alpha_k)) \neq 0. \end{cases}
  \]
\end{prop}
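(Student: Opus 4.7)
My plan is to derive this proposition as a direct application of Proposition~\ref{prop:additivity} with $h=2$, together with the $L^{(2)}$-induction principle applied to each summand. The proof decomposes into verifying the hypotheses of Proposition~\ref{prop:additivity}, applying it to obtain the first equality, and then identifying each infection contribution.

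First I would check the hypotheses of Proposition~\ref{prop:additivity}. Since the ordinary derived series is contained in the local one, $\Gamma^{(3)}\subseteq \Gamma^{(3)}_{(\Q,\Z_p,\Q)}$, and the latter is trivial because $\Gamma = G/G^{(3)}_{(\Q,\Z_p,\Q)}$; hence $\Gamma^{(3)}=1$. The curves $\alpha_k$ lie in $\pi_1(M_R)^{(2)}$ by hypothesis, so $\phi(\alpha_k)\in \Gamma^{(2)}$. The key structural observation is that $\Gamma^{(2)}$ is torsion-free: by Definition~\ref{defn:nderived}, $\Gamma^{(2)}_{(\Q,\Z_p,\Q)}/\Gamma^{(3)}_{(\Q,\Z_p,\Q)}$ injects into a $\Q$-vector space, and since the denominator vanishes, $\Gamma^{(2)}_{(\Q,\Z_p,\Q)}$ is torsion-free, as is its subgroup $\Gamma^{(2)}$. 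Therefore each $\phi(\alpha_k)$ is either trivial or of infinite order, and Proposition~\ref{prop:additivity} applies, delivering the unique extensions $\phi_0$ and $\phi_k$ and the first equality
\[
  \rho^{(2)}(M_K,\phi) = \rho^{(2)}(M_R,\phi_0) + \sum_k \rho^{(2)}(M_{J_k},\phi_k).
\]

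Second, I would identify each summand $\rho^{(2)}(M_{J_k},\phi_k)$. Under the satellite identification, a meridian $\mu(J_k)$ of $J_k$ is glued to $\lambda(\alpha_k)$; since $\lambda(\alpha_k)$ is freely homotopic to $\alpha_k$, it also represents a class in $\pi_1(M_R)^{(2)}$, so $\phi_k(\mu(J_k)) = \phi(\lambda(\alpha_k)) \in \Gamma^{(2)}$. Because $\pi_1(M_{J_k})$ is normally generated by $\mu(J_k)$ and $\Gamma^{(2)}\trianglelefteq \Gamma$, the image of $\phi_k$ lies in $\Gamma^{(2)}$, which is abelian (as $\Gamma^{(3)}=1$); thus $\phi_k$ factors through $H_1(M_{J_k})\cong\Z$. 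If $\phi(\lambda(\alpha_k))=0$ then $\phi_k$ is trivial and $\rho^{(2)}(M_{J_k},\phi_k)=0$. Otherwise $\phi(\lambda(\alpha_k))$ is a nonzero element of the torsion-free group $\Gamma^{(2)}$, hence of infinite order, so $\phi_k$ factors through an injection $\Z \hookrightarrow \Gamma$; by $L^{(2)}$-induction \cite[Proposition~5.13]{Cochran-Orr-Teichner:1999-1} and Example~\ref{example:abelian-rep-integral}, $\rho^{(2)}(M_{J_k},\phi_k) = \rho_0(J_k)$. In either case $\rho^{(2)}(M_{J_k},\phi_k) = \delta_k(\phi)\rho_0(J_k)$, and summing gives the second equality.

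The one genuinely substantive step is verifying that $\Gamma^{(2)}$ is torsion-free: this single property simultaneously unlocks Proposition~\ref{prop:additivity} and collapses each infection term to a classical Tristram-Levine signature integral via $L^{(2)}$-induction. Everything else is bookkeeping about the extension of representations and the standard fact that a trivial representation contributes zero to the $\rho$-invariant.
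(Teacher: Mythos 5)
Your proposal is correct and follows exactly the route the paper takes: the paper does not give a separate proof of Proposition~\ref{prop:ouradditivity} but instead derives it, in the paragraphs immediately preceding the statement, by applying Proposition~\ref{prop:additivity} with $h=2$ (using that $\Gamma^{(3)}=1$ and that $\Gamma^{(2)}$ is torsion-free so $\phi(\alpha_k)$ is trivial or of infinite order), then noting that $\phi_k$ factors through abelianization and invoking $L^{(2)}$-induction together with Example~\ref{example:abelian-rep-integral} to identify $\rho^{(2)}(M_{J_k},\phi_k)$ with $\delta_k(\phi)\rho_0(J_k)$. Your explicit justification of the torsion-freeness of $\Gamma^{(2)}$ via the $\Q$-tensoring in Definition~\ref{defn:nderived} is a small elaboration of what the paper asserts directly, but the logical content is identical.
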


The following straightforward consequence of \cite[Theorem~3.11]{Cha:2014-1}
will provide our key upper bound on $L^{(2)}$-signatures.  Strebel's class of
groups $D(\Z_p)$ was defined in \cite{Strebel:1974-1}; we will not recall the
definition.   We will use the fact that for any group $G$ and any $h \in
\mathbb{N}$, we have that $\Lambda = G/G^{(h)}_\mathcal{S}$ is amenable and lies
in $D(\Z_p)$ provided $S_i$ is either $\Q$ or $\Z_{p^{a_i}}$ for every $i \in
\mathbb{N}$~\cite[Lemma~6.8]{Cha-Orr:2009-01}.

\begin{thm}\label{thm:upperbound}
  Let $Z$ be a 4-manifold with boundary $\partial Z=Y$ and let $\phi \colon
  \pi_1(Y) \to \pi_1(Z) \to \Lambda$ be a homomorphism, where $\Lambda$ is
  amenable and in Strebel's class $D(\mathbb{Z}_p)$. Then $|\rho^{(2)}(Y, \phi)|
  \leq 2 \dim_{\Z_p} H_2(Z, \mathbb{Z}_p).$
\end{thm}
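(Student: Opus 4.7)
The plan is to proceed directly from the signature-defect definition of the $\rho$-invariant together with the bounding 4-manifold $Z$ that we are already given. Since $\phi$ extends to $\Phi \colon \pi_1(Z) \to \Lambda$, we may compute
\[
  \rho^{(2)}(Y,\phi) = \sigma^{(2)}_\Lambda(Z,\Phi) - \sigma(Z),
\]
so the triangle inequality reduces the problem to bounding $|\sigma^{(2)}_\Lambda(Z,\Phi)|$ and $|\sigma(Z)|$ separately, each by $\dim_{\Z_p} H_2(Z;\Z_p)$.

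For the ordinary signature, the standard bound $|\sigma(Z)| \le \dim_\Q H_2(Z;\Q)$ holds because the signature of any symmetric bilinear form over $\Q$ is at most the rank. By the universal coefficient theorem $\dim_\Q H_2(Z;\Q) \le \dim_{\Z_p} H_2(Z;\Z_p)$, and so $|\sigma(Z)| \le \dim_{\Z_p} H_2(Z;\Z_p)$. For the $L^{(2)}$-signature, the analogous rank bound is $|\sigma^{(2)}_\Lambda(Z,\Phi)| \le \dim_{\mathcal{N}\Lambda} H_2(Z;\mathcal{N}\Lambda)$, which is essentially the same argument carried out in the spectral theory of $\mathcal{N}\Lambda$-modules (see the discussion in Section~\ref{section:L2-invariants}).

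The main step, and the one that genuinely uses the hypotheses on $\Lambda$, is to control $\dim_{\mathcal{N}\Lambda} H_2(Z;\mathcal{N}\Lambda)$ by $\dim_{\Z_p} H_2(Z;\Z_p)$. This is precisely the content of \cite[Theorem~3.11]{Cha:2014-1}, whose statement requires that $\Lambda$ be amenable and lie in Strebel's class $D(\Z_p)$ so that the inclusion $\Z_p \to \mathcal{N}\Lambda \otimes_{\Z\Lambda} \Z_p$ and the relevant homological algebra yield a dimension-preserving comparison between mod-$p$ coefficients and Von Neumann algebra coefficients. I would simply invoke this theorem here: the remaining verification is that our hypotheses place us in its setting, which is immediate from the statement.

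Combining, $|\rho^{(2)}(Y,\phi)| \le |\sigma^{(2)}_\Lambda(Z,\Phi)| + |\sigma(Z)| \le 2 \dim_{\Z_p} H_2(Z;\Z_p)$, as required. I do not expect any genuine obstacle: the only nontrivial ingredient is the cited theorem of Cha, and the rest is bookkeeping. The one thing to be slightly careful about is that $Y = \partial Z$ may be disconnected, so the signature-defect formula must be interpreted using the disconnected-boundary conventions of Section~\ref{section:disconnected}; this however does not affect the inequalities above since both the $L^{(2)}$- and ordinary signatures are still defined from the intersection form on $Z$.
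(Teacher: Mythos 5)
Your proposal is essentially identical to the paper's proof: both read off $\rho^{(2)}(Y,\phi) = \sigma^{(2)}_\Lambda(Z,\Phi) - \sigma(Z)$ from the signature-defect definition, bound each term by the rank of $H_2$ with the corresponding coefficients, and invoke \cite[Theorem~3.11]{Cha:2014-1} together with universal coefficients to dominate everything by $\dim_{\Z_p} H_2(Z;\Z_p)$. The only detail the paper makes explicit that you elide is that a compact topological $4$-manifold with boundary has the homotopy type of a finite $3$-dimensional CW complex (via~\cite{KS69} and~\cite{Wa66}), which is needed to put $Z$ into the setting of Cha's theorem.
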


\begin{proof}[Proof of Theorem~\ref{thm:upperbound}]

  Let $\widetilde{Z}$ be the cover of $Z$ induced by the homomorphism $\pi_1(Z)
  \to \Lambda$. Since $Z$ is a compact 4-manifold with boundary, it has the
  homotopy type of a finite 3-dimensional CW complex. This follows from
  \cite[\textsection 1(III)]{KS69} to get a finite CW complex, combined with
  \cite[Corollary 5.1]{Wa66} to restrict the dimension of the CW complex to
  three. Let $C_*$ be the corresponding chain complex, and let
  $\widetilde{C_*}:= C_*(\widetilde{Z})$ denote the chain complex of
  $\widetilde{Z}$. Since $\Lambda$ is amenable and in
  Strebel's~\cite{Strebel:1974-1} class $D(\Z_p)$,
  \cite[Theorem~3.11]{Cha:2014-1} tells us that
  \begin{align*}
    \dim^{(2)} H_2(Z; \mathcal{N} \Lambda)
    & = \dim_{\Z_p} H_2( \Z_p \otimes_{\Z \Lambda} \widetilde{C_*})
    \\
    &= \dim_{\Z_p} H_2( \Z_p \otimes_{\Z} C_*)= \dim_{\Z_p} H_2(Z; \Z_p).
  \end{align*}
  It follows that
  \begin{align*}
    |\rho^{(2)}(Y, \phi)|&= | \sigma_\Lambda^{(2)}(Z)- \sigma(Z)|
    \leq \dim^{(2)} H_2(Z; \mathcal{N} \Lambda) + \dim_{\Q} H_2(Z;\Q)
    \leq 2\dim_{\Z_p} H_2(Z; \Z_p).
  \end{align*}
  We use the universal coefficient theorem to deduce that $\dim_{\Q} H_2(Z;\Q)
  \leq \dim_{\Z_p} H_2(Z; \Z_p)$ for the final inequality.
\end{proof}

\section{Metabelian twisted homology}\label{section:metabelian-homology}

In this section we review Casson-Gordon type metabelian representations of knot
groups, and the resulting twisted homology.  The behavior of infection curves in
this twisted homology will be key to our proof of
Theorem~\ref{thm:mainthm-intro}.

We now let $S$ denote a commutative PID  and let $Q$ denote its quotient field.
We will often take  $S= \F[t^{\pm1}]$ and $Q= \F(t)$  for some field $\F$, as
well as $S= \Z$ and $Q=\Q$.

Let $X$ be a space homotopy equivalent to a finite CW-complex and let $A$ be a
left $S$-module given the structure of a right $\Z[\pi_1(X)]$-module by a
homomorphism $\phi\colon \pi_1(X) \to \Aut(A)$. Note that $\pi_1(X)$ naturally
acts on $\widetilde{C_*}$, the chain complex of the universal cover
$\widetilde{X}$ of $X$, on the left. Then as in
Section~\ref{section:disconnected}, the twisted homology $H_*(X; A)$ is defined
to be
\begin{align*}
  H_*^\phi (X):= H_*(A \otimes_{\Z[ \pi_1(X)]} \widetilde{C_*}).
\end{align*}

We will be particularly interested in the following metabelian representations.
Suppose that we have a preferred surjection $\varepsilon \colon H_1(X) \to \Z$.
For every $r \in \mathbb{N}$, we let $p_r\colon \Z \to \Z_r$ be the usual
projection map and let $X^r$ denote the $r$-fold cyclic cover of $X$
corresponding to $\ker(p_r \circ \varepsilon)$. Note that covering
transformations give $H_1(X^r)$ the structure of a
$\mathbb{Z}[\mathbb{Z}_r]$-module.
 Choosing  a preferred element $\gamma_0 \in \pi_1(X)$ with
$\varepsilon(\gamma_0)=+1$ then gives us a map
\begin{align*}
  \rho_{\gamma_0}\colon \pi_1(X) \to \Z \ltimes H_1(X^r) \quad\text{by }
  \gamma \mapsto \big(t^{\varepsilon(\gamma)},
  [\gamma_0^{-\varepsilon(\gamma)} \gamma]\big),
\end{align*}
where $\gamma_0^{-\varepsilon(\gamma)} \gamma \in \pi_1(X_r) \leq \pi_1(X)$ and
$[\gamma_0^{-\varepsilon(\gamma)} \gamma]$ denotes the image of
$\gamma_0^{-\varepsilon(\gamma)} \gamma$ under the Hurewicz map.

Given any choice of a homomorphism $\chi\colon H_1(X^r) \to \Z_m$, we let
$\xi_m= e^{2 \pi i/ m}$ and  obtain a map $\theta_\chi \colon \Z \ltimes
H_1(X^r) \to \GL_r(\Q(\xi_m)[t^{\pm1}])$ by
\[
  (t^j,a) \mapsto
  \begin{bmatrix}
    0& \dots &0&t \vphantom{\xi_m^{\chi(a)}}\\
    1&0& \dots &0 \vphantom{\xi_m^{\chi(a)}}\\
    \vdots &\ddots & \ddots & \vdots \\
    0&\dots & 1 & 0 \vphantom{\xi_m^{\chi(a)}}
  \end{bmatrix}^j
  \begin{bmatrix}
    \xi_m^{\chi(a)} & 0 & \dots & 0 \\
    0&\xi_m^{\chi(t\cdot a)} & \dots& 0\\
    \vdots&\vdots&\ddots & \vdots\\ 0&0&\dots &\xi_m^{\chi(t^{k-1} \cdot a)}
  \end{bmatrix}.
\]
We then let $S=\Q(\xi_m)[t^{\pm1}]$ and $A= \Q(\xi_m)[t^{\pm1}]^r$, noting that
 $\theta_\chi \circ \rho_{\gamma_0}$ gives $A$ a right $\Z[\pi_1(X)]$-module
 structure. These representations appear in \cite{Kirk-Livingston:1999-2, Let00,
 Friedl:2003-4, HKL08, Friedl-Powell:2010-1}, modelled on the covering spaces
 used in the definition of Casson-Gordon invariants~\cite{Casson-Gordon:1986-1}.
 We refer to such representations as \emph{Casson-Gordon type representations}.

In particular, given an oriented knot $K$ and a preferred meridian $\mu \in
\pi_1(X_K)$, the canonical abelianization map $\varepsilon\colon \pi_1(X_K) \to
\Z$ has $\varepsilon(\mu)= +1$. Note that since the zero-framed longitude
$\lambda_K$ of~$K$ is an element of $\pi_1(X_K)^{(2)}$, for every $r \in
\mathbb{N}$ the map $\rho_\mu\colon \pi_1(X_K) \to \Z \ltimes H_1(X^r_K)$
extends uniquely over $\pi_1(M_K)$. The homology $H_1(X^r)$ splits canonically
as $H_1(\Sigma_r(K)) \oplus \Z$, where $\Sigma_r(K)$ is the $r$th cyclic
branched cover of $S^3$ along $K$. Our map $\chi\colon H_1(X^r) \to \Z_m$ will
always be chosen to factor through the projection map to $H_1(\Sigma_r(K))$.

In the case $r=2$ we have that $t$ must act by $-1$ on $H_1(\Sigma_2(K))~$, as discussed in the first paragraphs of~\cite{Davis95}, and so
we can conveniently decompose $\theta_\chi \circ \rho_\mu$ differently as
$\theta \circ f_\chi$, where
\begin{align*}
  f_\chi \colon \pi_1(M_K) &\to \Z \ltimes \Z_m \\
  \gamma &
  \mapsto  (t^{\varepsilon(\gamma)}, \chi([\mu^{-\varepsilon(\gamma)} \gamma]))
\end{align*}
and
\begin{align*}
  \theta\colon \Z \ltimes \Z_m &\to \GL_2(\Q(\xi_m)[t^{\pm1}]) \\
  (t^j, a) &\mapsto
  \begin{bmatrix}
    0 & t \\
    1 & 0 \\
  \end{bmatrix}^j
  \begin{bmatrix}
    \xi_m^{a} & 0 \\
    0 & \xi_m^{-a}
  \end{bmatrix}.
\end{align*}

The following proposition is a slight modification of a result of
\cite[Prop.~7.1]{MilPow17}, and gives the key connection between a certain
derived series and metabelian homology, when $m=q^s$ is a prime power.

\begin{prop}\label{prop:7-1-MP}
  Let $W$ be a 4-manifold with boundary $\partial W= Y$. Let $\Phi \colon
  \pi_1(W) \to \Aut(A)$ be a representation  that factors through $\Z \ltimes
  \Z_{q^s}$ for some prime $q$, and let $\phi\colon \pi_1(Y) \to \Aut(A)$ be the
  composition of $\Phi$ with the inclusion map $\pi_1(Y) \to \pi_1(W)$. Let
  $\eta \in \pi_1(Y)^{(2)}$ and suppose $\eta$ is sent to the identity in
  $\pi_1(W)/\pi_1(W)^{(3)}_{(\Q, \Z_{q^s}, \Q)}$. Then for any $v \in A$ and any
  $\widetilde{\eta}$, a lift of $\eta$ to the cover of $W$ induced by $\Phi$, we
  have that the class $[v \otimes \widetilde{\eta}]$  in $H_1^{\phi}(Y)$ maps to
  $0$ in $H_1^{\Phi}(W)$.
\end{prop}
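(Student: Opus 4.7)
The plan is to construct a 2-chain in $A\otimes_{\Z[\pi_1(W)]}C_*(\widetilde{W})$ whose boundary represents an integer multiple of $v\otimes\widetilde\eta$, and then use the $\Q$-vector space structure on $A$ (present in the applications of this proposition later in the paper, where $A=\Q(\xi_m)[t^{\pm1}]^r$) to remove that integer factor. Set $G := \pi_1(W)$ and $\mathcal{S} := (\Q, \Z_{q^s}, \Q)$.

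First I unpack the hypothesis. The condition $\eta \in G^{(3)}_\mathcal{S}$ means $\eta$ lies in $G^{(2)}_\mathcal{S}$ and maps to zero in $(G^{(2)}_\mathcal{S})^{\mathrm{ab}}\otimes_\Z\Q$, so for some integer $n\ge 1$ we have a factorization $\eta^n = \prod_{i=1}^k [a_i, b_i]$ in $G$ with $a_i, b_i \in G^{(2)}_\mathcal{S}$. Since $\Phi$ factors through $\Z\ltimes\Z_{q^s}$, functoriality of the local derived series gives $\Phi(G^{(2)}_\mathcal{S})\subseteq(\Z\ltimes\Z_{q^s})^{(2)}_\mathcal{S}$. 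A direct computation---verifying $(\Z\ltimes\Z_{q^s})^{(1)}_\mathcal{S} = \{0\}\times\Z_{q^s}$ and then that the canonical map $\Z_{q^s}\to\Z_{q^s}\otimes_\Z\Z_{q^s}\cong\Z_{q^s}$ is the identity---yields $(\Z\ltimes\Z_{q^s})^{(2)}_\mathcal{S}=0$. Hence $\Phi(\eta) = \Phi(a_i) = \Phi(b_i) = 1$, and the lifts of $\eta$ and of each $a_i, b_i$ to $\widetilde{W}_\Phi$ are closed loops.

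Next I perform a chain-level calculation in $A\otimes_{\Z G}C_*(\widetilde{W})$. Using the product-of-paths rule $\widetilde{gh}=\widetilde g+g\cdot\widetilde h$ and the inversion rule $\widetilde{g^{-1}}=-g^{-1}\cdot\widetilde g$, one expands
\[
  \widetilde{[a,b]}=\widetilde a+a\widetilde b-(aba^{-1})\widetilde a-[a,b]\widetilde b.
\]
For $a,b\in\ker\Phi$, all four deck translations act trivially after tensoring with $v$ over $\Z G$, so $v\otimes\widetilde{[a,b]}=0$. Iterating with the product-of-paths formula gives $v\otimes\widetilde{\eta^n}=0$ when we represent $\eta^n$ as the commutator product $\prod[a_i,b_i]$. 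On the other hand, representing $\eta^n$ as the $n$-fold iterate of $\eta$ gives $v\otimes\widetilde{\eta^n}=n\,(v\otimes\widetilde\eta)$, again using $\Phi(\eta)=1$. Since these two path representatives of the loop $\eta^n$ are homotopic rel basepoint in $W$, their lifts differ by the boundary of a 2-chain in $C_*(\widetilde W)$, and after tensoring with $v$ we conclude $n(v\otimes\widetilde\eta)$ is a boundary in the twisted chain complex. Therefore $n[v\otimes\widetilde\eta]=0$ in $H_1^\Phi(W)$.

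Finally, in our applications $A=\Q(\xi_m)[t^{\pm1}]^r$ is a $\Q$-vector space, so $H_1^\Phi(W)$ inherits a $\Q$-vector space structure, and any $n$-torsion class vanishes. This gives $[v\otimes\widetilde\eta]=0$ in $H_1^\Phi(W)$, which is precisely the image of the corresponding class in $H_1^\phi(Y)$ under the inclusion-induced map, as required. The main obstacle is the careful chain-level bookkeeping needed to compare the two path representatives of $\widetilde{\eta^n}$, together with verifying that $\Phi$ annihilates $G^{(2)}_\mathcal{S}$ via the specific computation of the local derived series of the metabelian group $\Z\ltimes\Z_{q^s}$.
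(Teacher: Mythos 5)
Your argument is correct and is essentially the proof of \cite[Prop.~7.1]{MilPow17} that the paper invokes: unwind the local derived series condition to write $\eta^n=\prod_i[a_i,b_i]$ with $a_i,b_i\in\pi_1(W)^{(2)}_{(\Q,\Z_{q^s},\Q)}$, note that $\Phi$ kills these elements because $(\Z\ltimes\Z_{q^s})^{(2)}_{(\Q,\Z_{q^s},\Q)}=0$ and the local derived series is functorial, then compute at the chain level (via the Fox-derivative/path-lifting rules) that $n(v\otimes\widetilde\eta)$ is a boundary, and finally divide by $n$. The one point worth making explicit is that the final division genuinely requires $A$ to be $\Q$-divisible; this holds in every application here since $A=\Q(\xi_{q^s})[t^{\pm1}]^r$, and the proposition should be read with that standing hypothesis even though the statement leaves it implicit.
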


\begin{proof}
  The proof of \cite[Prop.~7.1]{MilPow17} (with its first and last sentences
  deleted) applies verbatim.
\end{proof}

Finally, we recall the twisted Blanchfield form. In analogy to the linking form
on the torsion part of the ordinary first homology of a closed oriented
3-manifold, if $\phi\colon \pi_1(M_K) \to GL_r(\Q(\xi_m)[t^{\pm1}])$ arises as
above then there is a metabelian twisted Blanchfield form~\cite{MilPow17}
\[
  \Bl^{\phi}\colon H_1^{\phi}(M_K) \times H_1^{\phi}(M_K)
  \to \Q(\xi_m)(t)/ \Q(\xi_m)[t^{\pm1}].
\]
Note that in the above circumstance  $H_1^\phi(M_K)$ is a torsion
$\Q(\xi_m)[t^{\pm1}]$-module, by the corollary
to~\cite[Lemma~4]{Casson-Gordon:1986-1}; see also~\cite{Friedl-Powell:2010-1}.
In Section~\ref{subsection:example2solvable},  we will need to know that this
form is  \emph{sesquilinear}~\cite{Powell:2016-1}. That is, letting
$\widebar{\cdot}$ denote the involution of $\Q(\xi_m)[t^{\pm1}]$ induced by
sending $t \to t^{-1}$ and $a+bi \mapsto a-bi$,  we have
\[
  \Bl^{\phi}(px, qy)= p \widebar{q} \Bl^{\phi}(x,y)
  \quad\text{for every } p, q \in \Q(\xi_m)[t^{\pm1}]
  \text{ and } x, y \in H_1^{\phi}(M_K).
\]

\section{Main theorem and examples}\label{section:statement}

Here is the result that we use to show that certain satellite knots have large 4-genus.

\begin{thm}\label{thm:general}
  Let $R$ be a ribbon knot and let $\eta^1, \dots, \eta^r$ be curves in $S^3
  \ssm \nu(R)$ that form an unlink in $S^3$ such that each $\eta^j$ represents
  an element of $\pi_1(M_R)^{(2)}$. Suppose that there is a prime $p$ such that
  for every nontrivial character  $\chi\colon H_1(\Sigma_2(R)) \to \Z_p$ we have
  \begin{enumerate}[\upshape\selectfont(1)]
    \item The module $H_1^{\theta \circ f_{\chi}}(M_R):= H_1 \left(\Q(\xi_p)[t^{\pm1}]^2  \otimes_{\Z[\pi_1(M_R)]} C_*(\widetilde{M_R}) \right)$ is nontrivial and
    generated by the collection  $\{[1,0] \otimes [\eta^j]\}_{j=1}^r$.
    \item The order of $H_1^{\theta \circ f_{\chi}}(M_R)$ is relatively prime to
    $\Delta_R(t)$ over $\Q(\xi_{p^a})(t)$ for all $a>0$.
  \end{enumerate}
Let $m_R>0$ denote the generating rank of the $p$-primary part of $H_1(\Sigma_2(R))$ and let $d_R$ denote the number of distinct orders of $H_1^{\theta \circ f_{\chi}}(M_R)$ as $\chi$ ranges over all nontrivial characters from $H_1(\Sigma_2(R))$ to $\Z_p$.

Now fix $g>0$ and suppose that $N\geq \frac{4g(d_R+1)+2}{m_R}$ and
  that the collection of knots $\{J_i^j\mid 1 \leq i \leq N, 1 \leq j \leq r\}$
  satisfy
  \begin{align*}
    |\rho_0(J^j_i)|> 2(2g+N-1) + N C_R
    + \sum_{k=1}^{i-1} \sum_{\ell=1}^r |\rho_0(J^\ell_k)|
    + \sum_{\ell=1}^{j-1} |\rho_0(J^\ell_i)|
  \end{align*}
  for each $1 \leq i \leq N$, $1 \leq  j \leq r$. Then the knot $K= \#_{i=1}^N
  R_{\eta^1, \dots, \eta^r}(J^1_i, \dots, J^r_i)$ has 4-genus at least $g+1$.
\end{thm}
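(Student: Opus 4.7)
The plan is to argue by contradiction, playing an upper bound against a lower bound for a single $L^{(2)}$-signature defect. Suppose $g_4(K) \le g$, witnessed by a genus-$g$ locally flat surface $\Sigma \subset D^4$ with $\partial \Sigma = K$. Write $V := D^4 \ssm \nu(\Sigma)$, let $E$ denote the standard connected-sum cobordism (to be reviewed in Section~\ref{section:a-standard-cobordism}) with boundary $Y := \bigsqcup_{i=1}^N M_{K_i}$ on one end and $M_K$ on the other, and form $Z := V \cup_{M_K} E$, so that $\partial Z = Y$. A routine Mayer--Vietoris computation will then yield the key bound $\dim_{\Z_p} H_2(Z;\Z_p) \le 2g + N - 1$.

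The central step is to produce a character $\chi = (\chi_1,\dots,\chi_N)$, with each $\chi_i\colon H_1(\Sigma_2(K_i)) \to \Z_p$, such that the assembled metabelian representation $\phi \colon \pi_1(Y) \to \GL_2(\Q(\xi_p)[t^{\pm 1}])$ built from $\theta \circ f_{\chi_i}$ factors, after passing through $\pi_1(Y) \to \pi_1(Z)$, through the amenable quotient $\Gamma := \pi_1(Z)/\pi_1(Z)^{(3)}_{(\Q,\Z_p,\Q)}$, and such that for at least one $i$ the class $[1,0]\otimes\widetilde{\eta^j}$ has nonzero image in $H_1^\Phi(Z)$ for some $j$. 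To accomplish this I would combine the long exact sequence of Proposition~\ref{prop:les-pairs} for the pair $(Z,Y)$ with the duality-type bounds proved in Section~\ref{section:controlling-homology-groups} to control the kernel of $H_1^\phi(Y)\to H_1^\Phi(Z)$, and then run a Casson--Gordon-style pigeonhole argument over $\Z_p$. Hypothesis~(1) guarantees that whenever $\chi_i$ is nontrivial on the $H_1(\Sigma_2(R))$-summand coming from the $i$-th copy of $R$ one has a candidate class $[1,0]\otimes\widetilde{\eta^j}$ generating $H_1^{\theta\circ f_{\chi_i}}(M_R)$, while hypothesis~(2) ensures that the metabelian twisted Blanchfield form does not collapse under extension of scalars to $\Q(\xi_{p^a})(t)$, so the pigeonhole can distinguish different order profiles across components. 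The precise inequality $N \ge (4g(d_R+1)+2)/m_R$ is calibrated so that the dimension $Nm_R$ of the $R$-summands exceeds the codimension imposed by $H_2(Z;\Z_p)$ together with the $(d_R+1)$ possible order-matching patterns.

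With such a $\chi$ in hand, the contrapositive of Proposition~\ref{prop:7-1-MP} guarantees that for some $i,j$ the longitude $\lambda(\eta^j_i)$ is nontrivial in $\Gamma$, so that $\delta_{i,j}(\phi) = 1$ in the notation of Proposition~\ref{prop:ouradditivity}. Applying that proposition componentwise,
\[
  \rho^{(2)}(Y,\phi) = \sum_{i=1}^N \rho^{(2)}(M_R,\phi_{i,0}) + \sum_{i=1}^N\sum_{j=1}^r \delta_{i,j}(\phi)\,\rho_0(J_i^j),
\]
with at least one $\delta_{i,j}=1$. The Cheeger--Gromov bound (Theorem~\ref{thm:cheeger-gromov-thm}) controls $|\rho^{(2)}(M_R,\phi_{i,0})|\le C_R$, while Theorem~\ref{thm:upperbound} (applicable since $\Gamma$ is amenable and in $D(\Z_p)$) gives $|\rho^{(2)}(Y,\phi)| \le 2\dim_{\Z_p} H_2(Z;\Z_p) \le 2(2g+N-1)$. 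The engineered growth of $\{|\rho_0(J_i^j)|\}$ in the hypothesis forces the single dominating nonzero-$\delta$ term to exceed the combined total of all other $\rho_0$ contributions plus $NC_R + 2(2g+N-1)$, contradicting the upper bound.

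The main obstacle is the character-selection step in the middle paragraph: one must simultaneously arrange that $\chi$ is compatible with $Z$, that it acts nontrivially on some $R$-summand, and that the corresponding class in $H_1^\Phi(Z)$ is not annihilated by the inclusion-induced map. Balancing these three requirements against the bound on $H_2(Z;\Z_p)$ is what dictates the precise arithmetic of the hypothesis on $N$, and it is here that the coprimality hypothesis~(2) on $\Delta_R(t)$ does its essential work by preventing the twisted Blanchfield modules from degenerating after the necessary change of coefficients.
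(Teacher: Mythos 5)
Your high-level strategy matches the paper's: build $Z = V \cup_{M_K} U$ from the hypothesized genus-$g$ surface exterior and the standard connected-sum cobordism, bound $\dim_{\Z_p} H_2(Z;\Z_p)$ by $2g+N-1$, choose characters so that a metabelian representation on $Y$ extends over $Z$, show some $[1,0]\otimes\lambda(\eta^j_i)$ survives to $H_1^\Phi(Z)$, apply Proposition~\ref{prop:7-1-MP} and Proposition~\ref{prop:ouradditivity}, and derive a contradiction between Theorem~\ref{thm:upperbound} and the engineered size of $\rho_0(J^j_i)$. These are the correct ingredients in the correct order.

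However, there is a genuine gap in the central technical step, and it comes with a misidentification that suggests the intended argument would not go through as sketched. You attribute the role of hypothesis~(2) to non-degeneration of the twisted Blanchfield form under the extension of scalars $\Q(\xi_p)(t) \hookrightarrow \Q(\xi_{p^a})(t)$. In fact the Blanchfield form plays no role in the proof of this theorem (it is used only to verify the hypotheses of the theorem in Example~\ref{subsection:example2solvable}). Hypothesis~(2) does its work in a different and much more concrete way: if all the nonzero components of $H_1^\phi(Y)$ coming from the nontrivial $\chi_i$ landed in $\ker\bigl(i_1\colon H_1^\phi(Y) \to H_1^\Phi(Z)\bigr)$, then $\Imm(i_1)$ would be a quotient of $\bigl(\mathcal{A}_\Q(R)\otimes\F[t^{\pm1}]\bigr)^{N-k}$, whose order is a power of $\Delta_R$; on the other hand, a chain of module-theoretic reductions (passing through $\ker(i_1)\cong\coker(j_2)$, Lemma~\ref{lem:intersecttorsion} to strip off the free part of rank $2g$ in $H_2^\Phi(Z,Y)$, and Lemma~\ref{lemma:moregeneralhomology} to dualize $\coker(j_2|_T)\cong\ker(j_1|_T)=\Imm(i_1)$) shows $\Imm(i_1)$ must contain a nonzero torsion submodule of order coprime to $\Delta_R$, provided $\lceil k/d_R\rceil - 2g > 0$. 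The inequality $N\ge\frac{4g(d_R+1)+2}{m_R}$ together with $k\ge n=\frac{m_RN-4g}{2}$ is exactly calibrated to make $\lceil k/d_R\rceil - 2g > 0$ hold. That contradiction is the heart of Claim~\ref{claim:not-contained}, and your description (``pigeonhole can distinguish different order profiles across components'') does not capture it. Two smaller points: the amenable quotient must be taken with $\Z_{p^a}$ for $a$ possibly larger than $1$ because the character obtained from Proposition~\ref{prop:step2} may only extend over $H_1(V^2)$ after enlarging its codomain to $\Z_{p^a}$; and the map to $\GL_2(\Q(\xi_{p^a})[t^{\pm1}])$ is used purely for the twisted-homology argument and should be kept notationally distinct from the map $\psi\colon\pi_1(Y)\to\Lambda$ that feeds the $\rho$-invariant bounds, as conflating them obscures where Proposition~\ref{prop:7-1-MP} is invoked.
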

We remark that both $m_R$ and $d_R$ depend not only on the ribbon knot $R$ but also on the choice of prime $p$, though for convenience we suppress this from the notation.  We remark also that
Theorem~\ref{thm:general} can be generalized to consider higher prime power branched covers by appropriately changing the constants; we leave the details of that to the interested reader.

For convenience, we write $\eta^j_i$ for the curve $\eta^j$ in the $i$th copy of
$R$ in $\#_{i=1}^N R$. Note that we can also write $K= \#_{i=1}^N K_i$, where
$K_i:= R_{\eta^1, \dots, \eta^j}(J^1_i, \dots, J^r_i)$. We will prove
Theorem~\ref{thm:general} in Section~\ref{section:proof-of-main-theorem} by
assuming that $g_4(K) \leq g$ and obtaining a contradiction as follows.

Under the assumption that $g_4(K) \leq g$,  we construct a manifold $Z$ with $\partial Z=Y:= \bigsqcup_{i=1}^{N}M_{K_i}$, $\chi(Z)=2g$, and a few other nice properties.  We then let
 \[\psi \colon\pi_1(Y) \to \pi_1(Z) \to \Lambda:= \pi_1(Z)/ \pi_1(Z)^{(3)}_{(\Q, \Z_{p^a}, \Q)}\] be the map induced by inclusion. Since $\Lambda$ is amenable and in $D(\Z_p)$~\cite[Lemma~4.3]{Cha:2014-1}, Proposition~\ref{thm:upperbound} gives  an upper bound on $|\rho^{(2)}(Y, \psi)|$ in terms of $g$. Our result follows from  obtaining a contradictory lower bound on $|\rho^{(2)}(Y, \psi)|$. By Proposition~\ref{prop:ouradditivity} and our choices of the $J^j_i$ knots, we will obtain a contradiction if for some $i$ and $j$ we have  $\lambda(\eta_i^j) \not \in  \pi_1(Z)^{(3)}_{(\Q, \Z_{p^a}, \Q)}$. By Proposition~\ref{prop:7-1-MP}, this will be implied if we can construct some representation $\phi \colon \pi_1(Y) \to \Z \ltimes \Z_{p^a}$ which extends over $\pi_1(Z)$ to $\Phi$ such that for some $i$ and $j$, the
element $[1,0] \otimes [\lambda(\eta_i^j)] \in H_1^{\phi}(Y)$ is not in the kernel of the inclusion induced map $H_1^{\phi}(Y) \to H_1^{\Phi}(Z)$.
The technical work of the proof consists of showing that such a map $\phi$ must exist under the assumption that $g_4(K) \leq g$ together with our construction of $K$.\\

%

We will first give some examples of knots satisfying the hypotheses of the
theorem and then prove some technical lemmas in the next two sections. In
particular we will need to gain some control over the size of certain homology
groups, in order to show that some curve $\eta_i^j$ always survives into a
suitable 3-solvable quotient of the fundamental group of the complement of a
hypothesized locally flat embedded surface of genus~$g$. Of course
Theorem~\ref{thm:mainthm-intro} follows immediately from
Theorem~\ref{thm:general} together with the examples exhibited in
Section~\ref{subsection:example2bipolar} below and (for the grope bounding result) Proposition~\ref{prop:gropebounding}.

It is relatively easy to find examples of seed ribbon knots satisfying the
hypotheses of Theorem~\ref{thm:general}, at least with the help of a computer
program to compute twisted metabelian homology, as developed in \cite{MilPow17}.
In Section~\ref{subsection:example2solvable} we give one such example of a pair
$(R,\alpha)$, and describe the appropriate infection by knots with Arf invariant
zero and large signature in order to obtain 2-solvable large 4-genus knots.

It is a little harder to find  suitable seed knots $R$ that also satisfy
Proposition~\ref{prop:buildingbipolar}, and therefore produce 2-solvable and
2-bipolar large 4-genus examples for the proof of
Theorem~\ref{thm:mainthm-intro}..  Nonetheless, we exhibit such a seed knot $R$
with suitable infection curves in Section~\ref{subsection:example2bipolar}, and
describe the appropriate infection by 0-bipolar knots with large signature in
order to obtain 2-bipolar, 2-solvable large 4-genus knots.

\subsection{Example 1: a 2-solvable knot with large 4-genus}
\label{subsection:example2solvable}

Let $R$ denote the ribbon knot $8_8$, with the unknotted curve $\alpha$ in $S^3
\ssm \nu(R)$ illustrated in Figure~\ref{fig:88}.

\begin{figure}[ht]
  \includegraphics[angle=90, height=5cm]{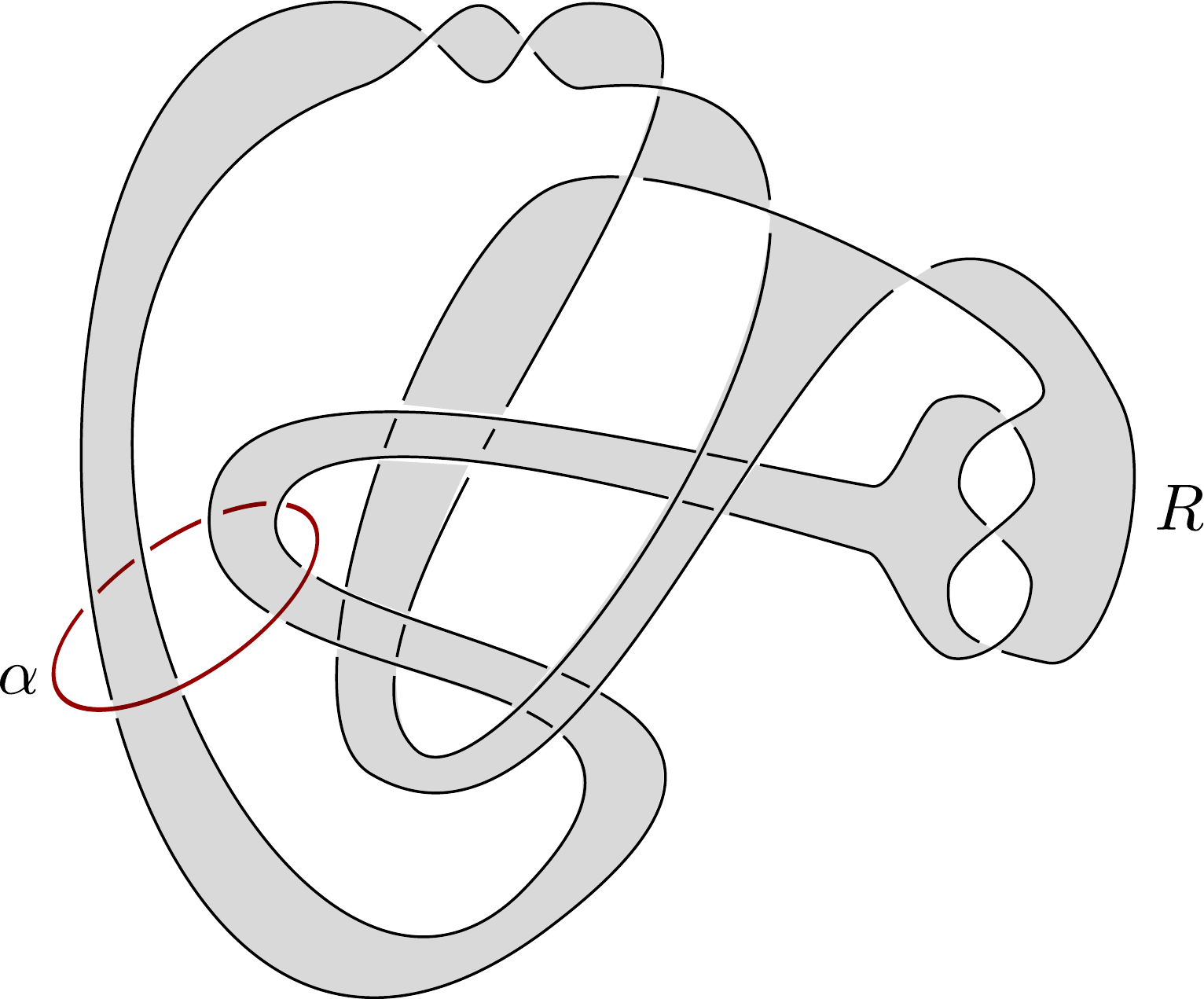}
  \caption{The knot $R=8_8$ with a grey Seifert surface and a red
    infection curve~$\alpha$.}
  \label{fig:88}
\end{figure}

This is the same knot-curve pair $(R,\alpha)$ as in
 \cite[Example~8.1]{MilPow17}, with a slight isotopy to make it more obvious
 that the curve $\alpha$ bounds a surface in the complement of a Seifert surface
 for $R$, and hence lies in $\pi_1(M_R)^{(2)}$. We will need a few computations
 from that paper. First, $H_1(\sr) \cong \Z_{25}$. Note that rescaling a
 character $\chi: H_1(\sr) \to \Z_5$ by a nonzero element of $\Z_5$ does not
 change the underlying $(\Z \ltimes \Z_5)$-covering space of $M_{R}$, and hence
 preserves the isomorphism type of the twisted homology. It is therefore not
 hard to check that given any nontrivial character  $\chi\colon H_1(\Sig_2(R))
 \to \Z_5$, the corresponding twisted homology $H_1^{\theta \circ
 f_\chi}(M_{R})$ is  isomorphic to $\Q(\xi_5)[t^{\pm1}]/ \langle t^2-3t+1
 \rangle$.  Since $\alpha$ is in $\pi_1(M_R)^{(2)}$, it  lifts to a curve
 $\widetilde{\alpha}$ in the $f_\chi$ covering space of $M_R$, and hence
 $x:=[1,0] \otimes \widetilde{\alpha}$ is an element of $H_1^{\theta \circ
 f_\chi}(M_{R})$. Finally, the metabelian twisted Blanchfield pairing
 $Bl^{\theta \circ f_\chi}(x, x)$ is non-zero in $\Q(\xi_5)(t)/
 \Q(\xi_5)[t^{\pm1}]$, as was computed in  \cite[Example~8.1]{MilPow17}.

\begin{lem}
   The element $x$ generates $H_1^{\theta\circ f_{\chi}}(M_R)$.
\end{lem}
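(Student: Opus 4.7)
The plan is to use the non-vanishing of the metabelian Blanchfield pairing $\Bl^{\theta \circ f_{\chi}}(x, x)$ together with the sesquilinearity of this form to rule out $x$ lying in any proper submodule of $M := H_1^{\theta \circ f_{\chi}}(M_R) \cong \Q(\xi_5)[t^{\pm 1}]/\langle t^2 - 3t + 1 \rangle$. Since $M$ is a cyclic module of order $t^2 - 3t + 1$ over the PID $\Q(\xi_5)[t^{\pm 1}]$, showing that $x$ generates $M$ is equivalent to showing that the order of $x$ equals $t^2 - 3t + 1$.

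First I would factor $t^2 - 3t + 1 = (t - a)(t - a')$ over $\Q(\xi_5)[t^{\pm 1}]$, where $a, a' = (3 \pm \sqrt{5})/2$; this factorization is valid because $\sqrt{5} \in \Q(\xi_5)$ via the standard Gauss sum. The Chinese remainder theorem then gives $M \cong M_+ \oplus M_-$ with $M_\pm \cong \Q(\xi_5)[t^{\pm 1}]/\langle t - a^\pm \rangle$, each a one-dimensional $\Q(\xi_5)$-vector space. These summands together with the zero submodule exhaust the proper submodules of $M$, so it suffices to rule out $x \in M_+$, $x \in M_-$, and $x = 0$; the last case is immediate from $\Bl^{\theta \circ f_{\chi}}(0, 0) = 0$.

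The key step, which I expect to be the most delicate, is showing that the Blanchfield pairing vanishes on $M_\pm \times M_\pm$. For any $y \in M_-$ we have $ty = a' y$ and hence $(t - a') y = 0$. Sesquilinearity applied in each variable in turn, using that $\bar{a'} = a'$ since $a'$ is real, yields the two annihilator relations $(t - a') \Bl^{\theta \circ f_{\chi}}(y, y) = 0$ and $(1 - a' t) \Bl^{\theta \circ f_{\chi}}(y, y) = 0$. A $\Q(\xi_5)$-linear combination of these produces $(1 - (a')^2) \Bl^{\theta \circ f_{\chi}}(y, y) = 0$; a routine check gives $(a')^2 = (7 - 3\sqrt{5})/2 \ne 1$, so the coefficient is a unit in $\Q(\xi_5)[t^{\pm 1}]$ and $\Bl^{\theta \circ f_{\chi}}(y, y) = 0$. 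The same argument with $a$ replacing $a'$ handles $M_+$.

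Putting the pieces together, the hypothesis $\Bl^{\theta \circ f_{\chi}}(x, x) \ne 0$ excludes $x$ from $M_+$, from $M_-$, and from being zero, so the cyclic submodule of $M$ generated by $x$ must be all of $M$. The only arithmetic facts requiring verification are that $\sqrt{5} \in \Q(\xi_5)$ and that $1 - (a^\pm)^2 \ne 0$, both of which are elementary cyclotomic computations.
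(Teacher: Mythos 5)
Your proof is correct, and it lands on the same core mechanism as the paper's---sesquilinearity of $\Bl^{\theta\circ f_\chi}$, the factorization $t^2-3t+1=(t-w_+)(t-w_-)$ over $\Q(\xi_5)$, and the cited non-vanishing $\Bl^{\theta\circ f_\chi}(x,x)\ne 0$---but it packages the module theory differently. You classify the proper submodules via the Chinese remainder theorem and, for an element $y$ of either one-dimensional summand, combine the two annihilator relations $(t-a^\pm)\Bl(y,y)=0$ and $(1-a^\pm t)\Bl(y,y)=0$ by a $\Q(\xi_5)$-linear combination to force $\Bl(y,y)=0$; the arithmetic input you need is $(a^\pm)^2\ne 1$. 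The paper instead picks a generator $y$, writes $x=c(t-w_*)y$, and computes $\Bl(x,x)$ directly, using $w_+w_-=1$ (the constant term of the order polynomial) to collapse $(t-w_+)\overline{(t-w_+)}$ to a unit times $t^2-3t+1$, which vanishes in the quotient. Both arguments are valid and essentially equivalent in strength; yours avoids choosing a generator and is slightly more structural, while the paper's is a shorter direct computation that exploits the specific relation $w_+w_-=1$. One small point of phrasing worth cleaning up: the $M_\pm$ in your CRT decomposition are direct summands, so when you "rule out $x\in M_\pm$" you should mean $x$ lying in the corresponding submodule $M_+\oplus 0$ or $0\oplus M_-$ of $M$; with that understood, the list of proper submodules and the rest of the argument are correct.
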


 \begin{proof}
  Supposing for a contradiction that $x$ does not generate. Let $y$ be some
  generator for $H_1^{\theta\circ f_{\chi}}(M_R)$. Note that $\Bl^{\theta \circ
  f_\chi}(y,y)$ is of the form $\frac{q(t)}{t^2-3t+1}$ for some $q(t) \in
  \Q(\xi_5)[t^{\pm1}]$.   The polynomial  $t^2-3t+1$ factors as $(t-w_+)(t-w_-)$
  for $w_{\pm}=\frac{3\pm \sqrt{5}}{2} \in \Q(\xi_5)$. Therefore, if $x$ does
  not generate then it must be homologous to $c(t-w_{*}) y$ for some $c \in
  \Q(\xi_5)[t^{\pm1}]$ and $*\in \{ \pm\}$; without loss of generality, say
  $*=+$. But then we can obtain a contradiction, since
  \begin{align*}
    \Bl^{\theta \circ f_\chi}(x,x)&= \Bl^{\theta \circ f_\chi}( c(t-w_{+}) y,  c(t-w_{+}) y)\\
    &= (t-w_+)\overline{(t-w_+)} c \overline{c} \Bl^{\theta \circ f_\chi}(  y,  y) \\
    &= (t-w_+)(t^{-1}-w_+)c \overline{c} \frac{q(t)}{t^2-3t+1} \\
    &=-t^{-1}w_+(t-w_+)(t-w_-) c \overline{c} \frac{q(t)}{t^2-3t+1} \\
    &= -t^{-1} w_+ c \overline{c} q(t)=0 \in \Q(\xi)(t)/ \Q(\xi_5)[t^{\pm1}].\qedhere
  \end{align*}
\end{proof}

Now, fix some $g \geq 0$ and let $N=8g+2$, noting that $m_R$, the generating rank of the 5-primary part of $H_1(\Sigma_2(R))$, is 1 and that there is only one isomorphism class of $H_1^{\theta \circ f_\chi}(M_R)$ and so $d_R=1$ as well. Note that $t^2-3t+1$ is relatively prime to $\Delta_R(t)=
2-6t+9t^2-6t^3+2t^4$, even considered over $\mathbb{C}$.
By \cite[Theorem~1.9]{Cha:2016-CG-bounds}, $C_R =10^9$ is an upper bound for the Cheeger-Gromov constant $C(M_R)$ of  the 0-surgery on $R$. For $k=1, \dots, N$,
let $J_k$ be a knot with $\Arf(J_k)=0$ and
\begin{align} \label{eqn:choosingjknots}
  \int_{\omega \in S^1} \sigma_{\omega}(J_k) \,d \omega > 2(2g+N-1)+ C_RN +
  \sum_{i=1}^{k-1}\Big( \int_{\omega \in S^1} \sigma_{\omega}(J_i) \, d \omega\Big).
\end{align}
We can achieve this by taking $J_k$ to be a sufficiently large even connected
sum of negative trefoils, since for the negative trefoil
\[
  \int_{\omega \in S^1} \sigma_{\omega}(J_i) \, d \omega = 4/3>0.
\]
In fact, the numerically minded reader can easily verify that
Equation~(\ref{eqn:choosingjknots}) is satisfied if we define $J_k$ to be the
connected sum of $10^{k+10} g$ negative trefoils.

We note that $K=\#_{i=1}^N R_\alpha(J_i)$ is a 2-solvable knot (by
Proposition~\ref{prop:infection2solvable}) which satisfies the hypotheses of
Theorem~\ref{thm:general}, and hence has topological 4-genus at least~$g+1$.

\subsection{Example 2: a 2-solvable, 2-bipolar knot with large 4-genus}
\label{subsection:example2bipolar}

Let $R$ be the knot depicted  on the left of Figure~\ref{fig:11n74}. On the
right of Figure~\ref{fig:11n74} we see a genus 2 Seifert surface $F$ for $R$
along with two sets of \emph{derivative curves} for $R$: each of the two component links
 $D=D_1 \cup D_2$ (blue) and $L= L_1 \cup L_2$ (red)
  generates a half-rank summand of $H_1(F)$,  forms a
slice link (in fact, an unlink) in $S^3$, and is 0-framed by~$F$.

\begin{figure}[ht]
  \includegraphics[height=6cm]{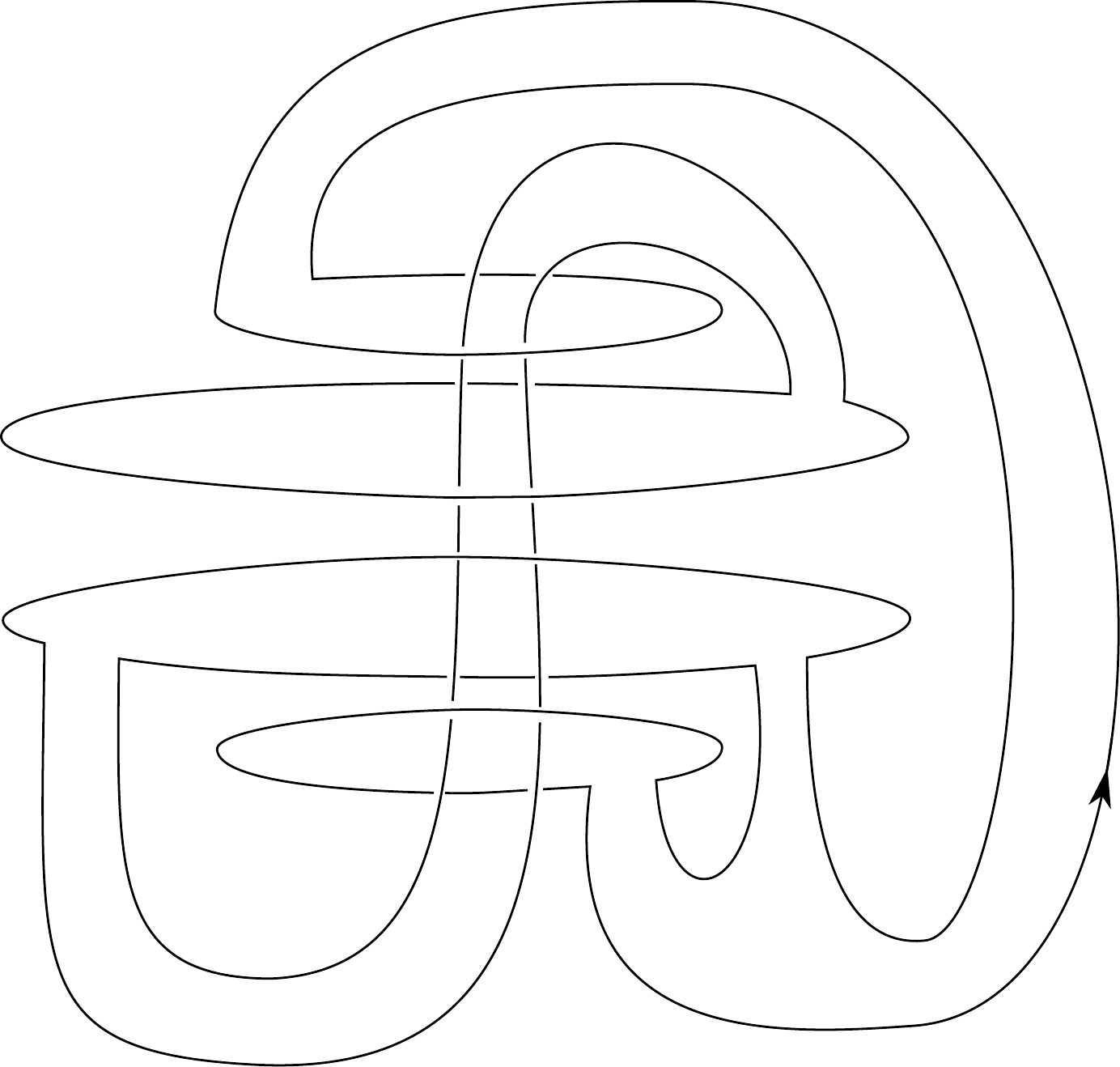} \qquad \qquad
  \includegraphics[height=6cm]{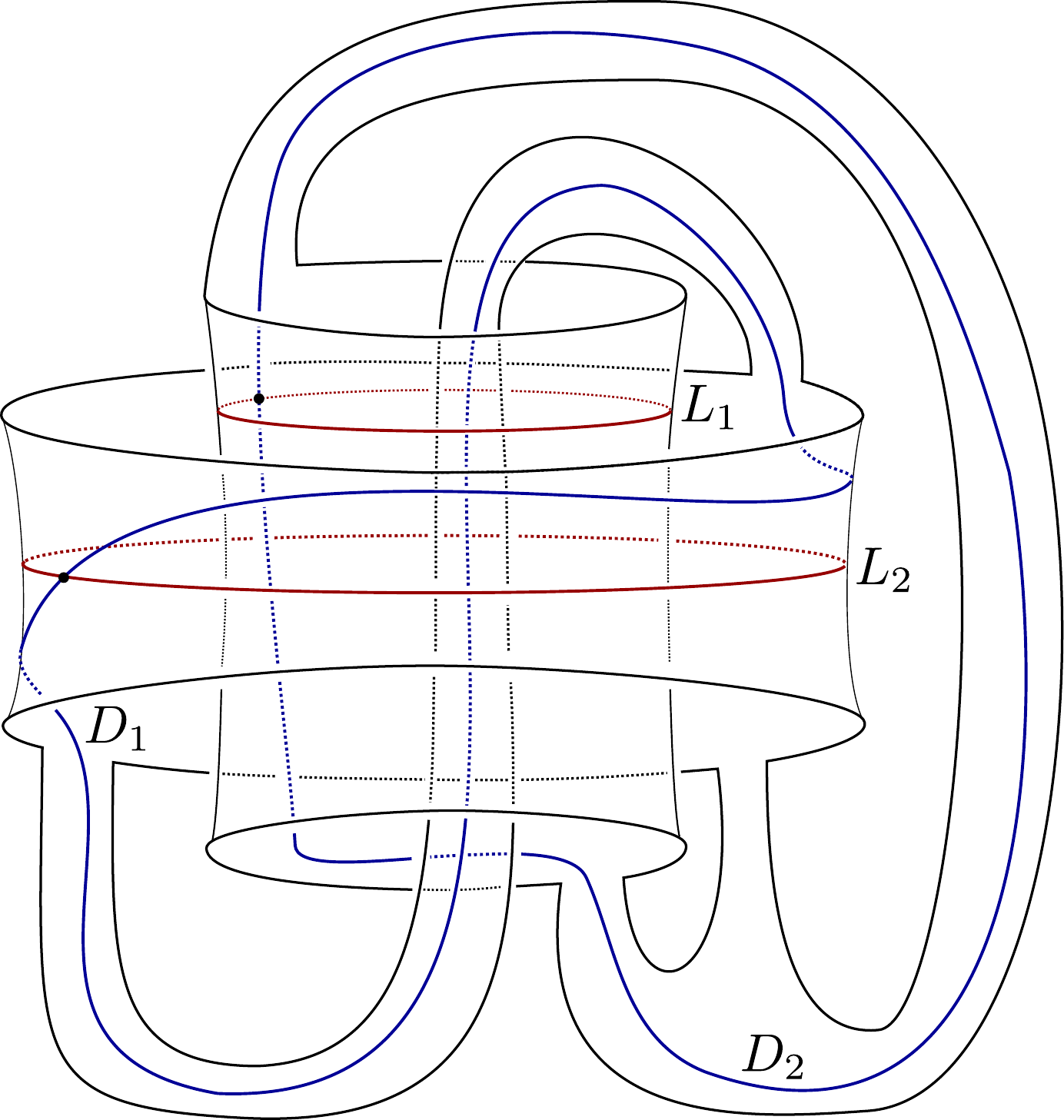}
  \caption{The knot $R=11_{n74}= P(3,-3,3,-2)$ (left) and a Seifert surface for
    $R$ along with two derivative links $L=L_1 \cup L_2$ and $D= D_1 \cup D_2$(right).}
  \label{fig:11n74}
\end{figure}

Now let $\beta_1, \beta_2 ,\gamma_1, \gamma_2$ be the curves indicated on the
left of Figure~\ref{fig:11n742}. These curves are disjoint from $F$ and hence
lie in  $\pi_1(M_R)^{(1)}$. Note that the indicated basepoints should be thought
of as living in a plane `far below' the plane of the diagram; in that plane
they can be connected, uniquely up to homotopy, to a single preferred basepoint
for~$\pi_1(M_R)$.

\begin{figure}[ht]
\includegraphics[height=6cm]{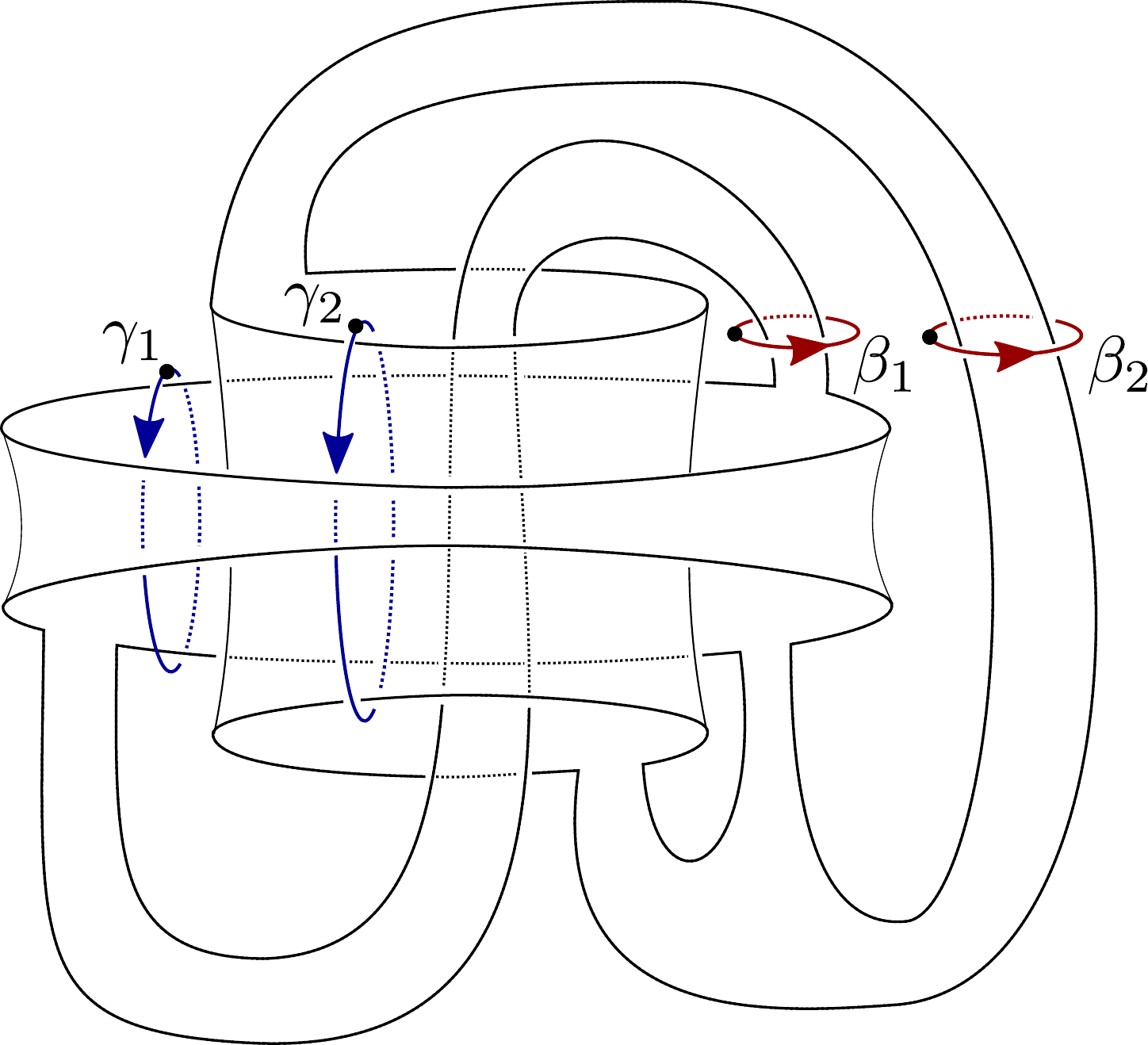}\qquad \quad
\includegraphics[height=6cm]{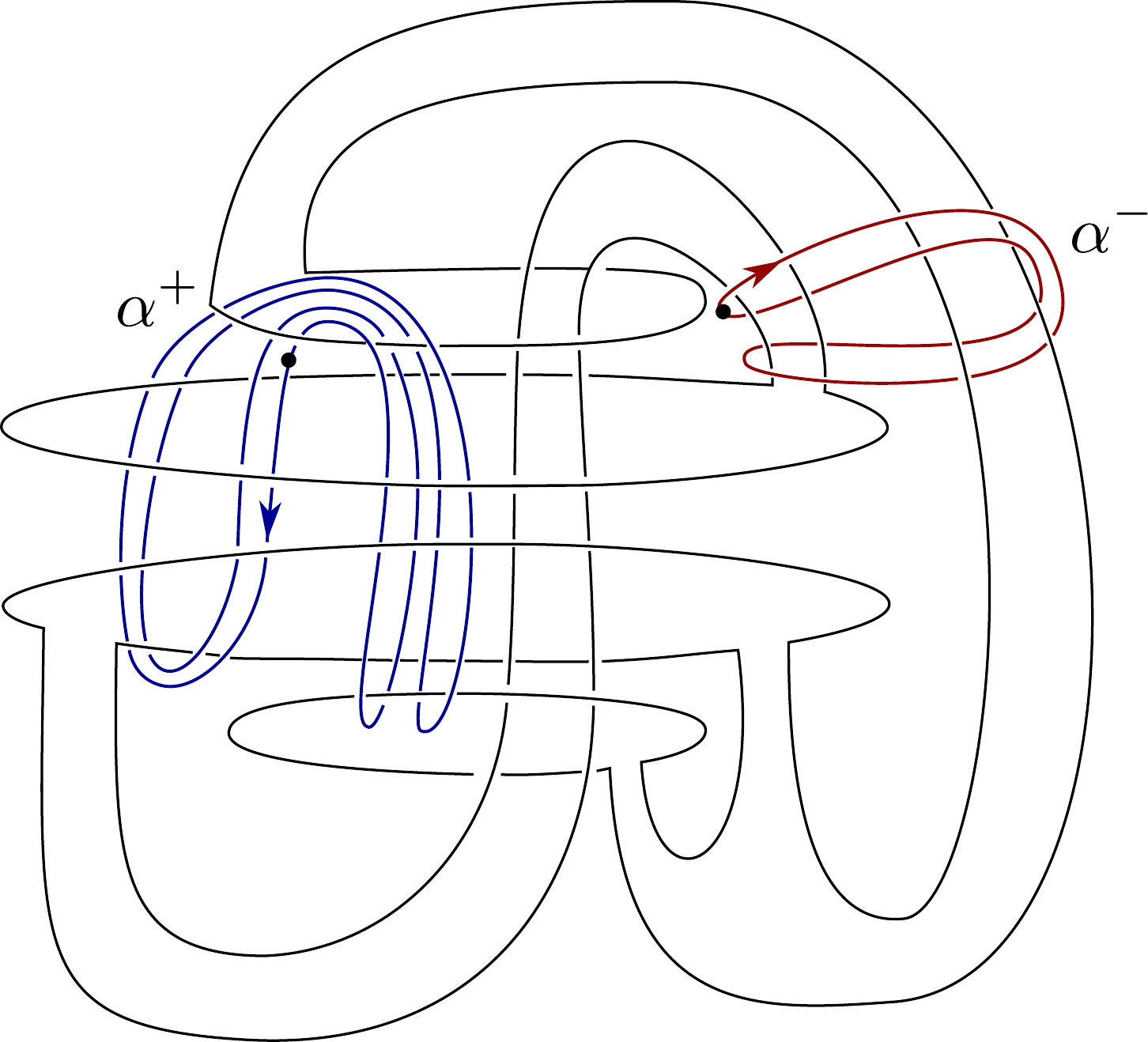}
\caption{Unknotted curves $\beta_1, \beta_2 ,\gamma_1, \gamma_2$ in
  $\pi_1(M_R)^{(1)}$ (left) and $\alpha^{\pm}$ in $\pi_1(M_R)^{(2)}$~(right).}
\label{fig:11n742}
\end{figure}

Let $\alpha^-=[\beta_1, \beta_2]$ and $\alpha^+=[\gamma_2, \gamma_2]$, where
$[v,w]=vwv^{-1}w^{-1}$; unknotted representatives for $\alpha^{\pm}$ are shown
on the right of Figure~\ref{fig:11n742}. Note that $\alpha^{\pm} \in
\pi_1(M_R)^{(2)}$. Since $\alpha^+$ has no geometric linking with either component of
the link $D$, for any knot $J$ the satellite knot
$R_{\alpha^+}(J)$ still has a smoothly slice derivative, and hence is itself
smoothly slice. Similarly, since $\alpha^-$ has no geometric linking with the
either component of the link $L$, the satellite knot $R_{\alpha^-}(J)$ is slice for every knot $J$.
Therefore, by Proposition~\ref{prop:buildingbipolar}, for any 0-positive knot
$J^+$ and 0-negative knot $J^-$, the knot $R_{\alpha^+, \alpha^-}(J^+, J^-)$ is
2-bipolar.

We now proceed to verify the conditions of Theorem~\ref{thm:general}, so that
for appropriate choices of $N \in \mathbb{N}$ and of $J^{\pm}_i$, $i=1, \dots,
N$, the knot $\#_{i=1}^N R_{\alpha^+, \alpha^-}(J_i^+, J_i^-)$ will have large
topological 4-genus.

Note that $H_1(\Sigma_2(R)) \cong \Z_3 \oplus \Z_3$, and so (up to rescaling by
a nonzero constant), there are four nontrivial characters $\chi\colon
H_1(\Sigma_2(R)) \to \Z_3$. We compute that for three of these characters, which
we call $\chi_1, \chi_2,$ and $\chi_3$, the resulting twisted homology is
\[
  H_1^{\theta \circ f_{\chi_i}}(M_R)
  \cong \Q(\xi_3)[t^{\pm1}]/ \langle(t-1)^2 \rangle=:A_1.
\]
For the fourth character, denoted by $\chi_4$,  we compute that the twisted
homology is
\[
  H_1^{\theta \circ f_{\chi_4}}(M_R) \cong \Q(\xi_3)[t^{\pm1}]
  / \langle t^2-14t+1 \rangle=:A_2.
\]
Crucially, for any nontrivial $\chi_i$ the lifts of $\alpha^+$ and $\alpha^-$ to
the cover of $M_R$ induced by $f_{\chi_i}$ generate $H_1^{\theta \circ
f_{\chi_i}}(M_R)$. (More precisely, $[1,0] \otimes [\alpha^+]$ and $[1,0]
\otimes [\alpha^-]$ generate.)

As in \cite{MilPow17}, we computed the twisted homology using a Maple program,
available for download on the authors' websites.  The program obtains a
presentation for the twisted homology using the Wirtinger presentation, taking
the Fox derivatives, and then applying the representation. It then simplifies
the presentation by row and column operations to obtain a diagonal matrix.
Keeping track of how the original generators, which can be identified in the
knot diagram, are modified under the sequence of row and column operations, we
not only compute the twisted homology $H_1^{\theta \circ f_{\chi_i}}(M_R)$ but
also can identify which elements the curves $[1,0] \otimes [\alpha^{\pm}]$
represent in $H_1^{\theta \circ f_{\chi_i}}(M_R)$. Note also that the orders of
$A_1$ and $A_2$ are both relatively prime to $\Delta_R(t)= (t^2-t+1)^2$ even over $\mathbb{C}$.

Now let $g \in \mathbb{N}$ be given. By our discussion above, we have $m_R=d_R=2$, and so we let
\[N:=\frac{4g(2+1)+2}{2}= 6g+1.\]
Let $C_R$ denote an
upper bound for the Cheeger-Gromov constant $C(M_R)$. For each $i=1, \dots, N$,
successively pick $m_i$  to be  even and large enough that $J_i^+:= \#^{m_i}
T_{2,3}$  satisfies
\begin{align*}
  \rho_0(J_i^+)&<-2(2g+N-1)- N C_R
  + \sum_{j=1}^{i-1}  \rho_0(J_i^+)- \sum_{j=1}^{i-1} \rho_0(J_i^-),
\end{align*}
and then pick $m_i'$ to be even and large enough that $J_i^-:=-\#^{m_i'}T_{2,3}$
satisfies
\begin{align*}
  \rho_0(J_i^-)&> 2(2g+N-1)+ NC_R-\sum_{j=1}^{i}  \rho_0(J_i^+)
  + \sum_{j=1}^{i-1} \rho_0(J_i^-) .
\end{align*}
Note that in particular $\rho_0(J_i^+)<0< \rho_0(J_i^-)$ for all~$i$.

We now let  $K_i= R_{\alpha^+, \alpha^-}(J_i^+, J_i^-)$ and  $K= \#_{i=1}^N
K_i$.  Observe that $K$ is 2-solvable by
Proposition~\ref{prop:infection2solvable} and 2-bipolar by
Proposition~\ref{prop:buildingbipolar}; $K$ also satisfies the hypotheses of
Theorem~\ref{thm:general}, so $g_4(K) \geq g+1$.  In
Section~\ref{section:height-four-gropes}, we will show that $K$ bounds an
embedded grope of height four in the 4-ball.  The knot $K$ therefore gives the
example claimed in Theorem~\ref{thm:mainthm-intro}.

It now remains to prove Theorem~\ref{thm:general}.

\section{Controlling the size of some homology groups}
\label{section:controlling-homology-groups}

This section contains some technical results needed for the proof of
Theorem~\ref{thm:general}, with the theme that we need to control the size of
certain homology groups of some covering spaces.

We start this section with an elementary algebraic lemma. This lemma and the one
after it are very similar to, and are inspired by, results of Levine in
\cite{Levine:1994-1}, in particular Lemma 4.3 of Part I and Proposition~3.2 of
Part II.  To avoid citing lemmas that were written for a different situation,
and for the edification of the reader, we provide short self-contained proofs.

\begin{lem}\label{lemma:algebraic-fact-square-matrices}
  Let $F \colon M \to M$ be an endomorphism of a finitely generated free
  $\Z[\Z_2]$-module $M$ such that
  \[
    \Id \otimes F \colon \Z \otimes_{\Z[\Z_2]} M \to \Z \otimes_{\Z[\Z_2]} M
  \]
  is an isomorphism, where $\Z$ is a $\Z[\Z_2]$-module via the trivial action of $\Z_2$ on $\Z$.
  Then
  \[
    \Id \otimes F \colon \Q[\Z_2] \otimes_{\Z[\Z_2]} M
    \to \Q[\Z_2] \otimes_{\Z[\Z_2]} M
  \]
  is also an isomorphism.
\end{lem}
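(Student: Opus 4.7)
The plan is to reduce to showing that $\det(F) \in \Z[\Z_2]$ becomes a unit in $\Q[\Z_2]$, and then verify this by a parity argument. Since $M$ is free of finite rank over the commutative ring $\Z[\Z_2]$, the endomorphism $F$ is represented by a square matrix with entries in $\Z[\Z_2]$. For any ring homomorphism $\Z[\Z_2] \to S$, the induced map $\Id \otimes F$ on $S \otimes_{\Z[\Z_2]} M$ is an isomorphism if and only if the image of $\det(F)$ in $S$ is a unit, so the problem reduces to comparing units in $\Z$ and in $\Q[\Z_2]$.

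First I would identify $\Q[\Z_2]$ as a ring. Writing $T$ for the generator of $\Z_2$, the Chinese Remainder Theorem gives a ring isomorphism
\[
  \Q[\Z_2] \;=\; \Q[T]/(T^2-1) \;\xrightarrow{\;\cong\;}\; \Q \times \Q,
  \qquad T \longmapsto (1,-1),
\]
whose two projections are the augmentation $\epsilon \colon T \mapsto 1$ and the sign representation $\sigma \colon T \mapsto -1$. An element of $\Q[\Z_2]$ is a unit precisely when both its images under $\epsilon$ and $\sigma$ are nonzero in $\Q$.

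Next I would translate the hypothesis: since the trivial $\Z_2$-action on $\Z$ is exactly augmentation, the fact that $\Id \otimes F$ is an isomorphism on $\Z \otimes_{\Z[\Z_2]} M$ amounts to $\epsilon(\det F) = \pm 1 \in \Z^\times$. Writing $\det(F) = a + bT$ with $a, b \in \Z$, this becomes $a + b = \pm 1$, and we must show $\sigma(\det F) = a - b \neq 0$ in order to conclude that $\det(F)$ is a unit in $\Q[\Z_2]$.

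The key (and only) observation is parity: $a + b$ and $a - b$ always have the same parity as integers, so since $a + b = \pm 1$ is odd, $a - b$ is also odd and in particular nonzero. Thus $\det(F)$ has nonzero image under both $\epsilon$ and $\sigma$, hence is a unit in $\Q[\Z_2]$, and $\Id \otimes F$ is an isomorphism on $\Q[\Z_2] \otimes_{\Z[\Z_2]} M$. There is no real obstacle here: the proof hinges on recognizing that $\Q[\Z_2]$ is semisimple and splits as $\Q \times \Q$, at which point the hypothesis on the augmentation component automatically forces the sign component to be nonzero by a parity count.
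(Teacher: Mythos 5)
Your proof is correct and uses essentially the same argument as the paper: both reduce to showing $\det F = a + bT$ becomes a unit in $\Q[\Z_2]$, and both hinge on the parity observation that $a+b = \pm 1$ forces $a - b$ to be odd, hence nonzero. The paper writes the inverse $\frac{1}{a^2-b^2}(a-bT)$ explicitly rather than invoking the Chinese Remainder splitting $\Q[\Z_2] \cong \Q \times \Q$, but this is a cosmetic difference in presentation.
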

\begin{proof}
  Let $A+BT \in \Z[\Z_2]$ be the determinant of $F$, where $A,B \in \Z$ and $T
  \in \Z_2$ denotes the generator.  Then $A+B = \pm 1$ by hypothesis. Thus
  $A^2-B^2 = (A+B)(A-B) = \pm (A-B) \neq 0$ since $A+B \equiv A-B$ modulo~$2$.
  Now \[(A+BT)\cdot \frac{1}{A^2-B^2}(A-BT) = 1,\] so over $\Q[\Z_2]$ the
  determinant of $F$ is invertible, and hence $\Id_{\Q[\Z_2]} \otimes F$ is an
  isomorphism as desired.
\end{proof}

Next we apply this lemma to obtain some control on the size of the homology of
double covering spaces.

\begin{lem}\label{lem:chain-homotopy-lifting}
  Let $f \colon X \to Y$ be a map of finite CW complexes such that
  \[
    f_* \colon H_i(X;\Z) \to H_i(Y;\Z)
  \]
  is an isomorphism for $i=0$ and a surjection for $i=1$.  Let $\varphi \colon
  \pi_1(Y) \to \Z_2$ be a surjective homomorphism and let $X^2,Y^2$ be the
  induced 2-fold covers.  Then
  \[
    f_* \colon H_i(X^2;\Q) \cong H_i(X;\Q[\Z_2])
    \to H_i(Y^2;\Q) \cong H_i(Y;\Q[\Z_2])
  \]
  is also an isomorphism for $i=0$ and a surjection for $i=1$.
\end{lem}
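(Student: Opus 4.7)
The plan is to reduce the lemma to showing that, in degrees $0$ and $1$, the homology of the $\Z[\Z_2]$-chain complex $\tilde{D}_*$ given by the mapping cone of $\tilde{f}_\# \colon C_*(X^2) \to C_*(Y^2)$ vanishes after tensoring over $\Z[\Z_2]$ with $\Q[\Z_2]$; the conclusion then follows from the long exact sequence of the cone together with the identification $\Q[\Z_2] \otimes_{\Z[\Z_2]} C_*(Z^2) \cong C_*(Z^2;\Q)$ for $Z = X,Y$. By cellular approximation we may assume $f$ is cellular, so $\tilde{D}_*$ is a complex of finitely generated free $\Z[\Z_2]$-modules; furthermore $D_* := \Z \otimes_{\Z[\Z_2]} \tilde{D}_*$ is canonically the ordinary mapping cone of $f_\#$, and the hypothesis on $f_*$ translates exactly to $H_0(D_*) = H_1(D_*) = 0$.

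Next, I construct a partial chain contraction of $D_*$ in low degrees: $\partial_1 \colon D_1 \to D_0$ is surjective, so by $\Z$-freeness of $D_0$ it admits a $\Z$-linear section $\sigma_0$ with $\partial_1 \sigma_0 = \Id$; then $\Id - \sigma_0 \partial_1$ takes values in $\ker \partial_1 = \Imm\, \partial_2$ and, by projectivity of $D_1$, lifts to $\sigma_1 \colon D_1 \to D_2$ satisfying $\partial_2 \sigma_1 + \sigma_0 \partial_1 = \Id$. Lifting $\sigma_0$ and $\sigma_1$ basis-by-basis to $\Z[\Z_2]$-linear maps $\tilde{\sigma}_0 \colon \tilde{D}_0 \to \tilde{D}_1$ and $\tilde{\sigma}_1 \colon \tilde{D}_1 \to \tilde{D}_2$ (possible since the $\tilde{D}_i$ are free $\Z[\Z_2]$-modules) yields
\[
\tilde{\partial}_1 \tilde{\sigma}_0 = \Phi_0, \qquad
\tilde{\partial}_2 \tilde{\sigma}_1 + \tilde{\sigma}_0 \tilde{\partial}_1 = \Phi_1,
\]
where $\Phi_i = \Id + (T-1) A_i$ for some $\Z[\Z_2]$-linear endomorphisms $A_0, A_1$, since both sides of each equation reduce to $\Id$ modulo $(T-1)$.

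By Lemma~\ref{lemma:algebraic-fact-square-matrices}, each $\Phi_i$ becomes invertible after tensoring with $\Q[\Z_2]$. The commutativity $\tilde{\partial}_1 \Phi_1 = \Phi_0 \tilde{\partial}_1$, which I had expected to be the main obstacle in defining a \emph{genuine} chain contraction over $\Q[\Z_2]$, actually comes for free: applying $\tilde{\partial}_1$ to the second of the displayed equations and using $\tilde{\partial}_1 \tilde{\partial}_2 = 0$ together with the first equation yields it immediately. Over $\Q[\Z_2]$ this rearranges to $\Phi_0^{-1} \tilde{\partial}_1 = \tilde{\partial}_1 \Phi_1^{-1}$. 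Setting $\sigma_i' := \tilde{\sigma}_i \Phi_i^{-1}$ over $\Q[\Z_2]$, I would then verify
\[
\tilde{\partial}_1 \sigma_0' = \Phi_0 \Phi_0^{-1} = \Id \qquad\text{and}\qquad
\tilde{\partial}_2 \sigma_1' + \sigma_0' \tilde{\partial}_1 = (\tilde{\partial}_2 \tilde{\sigma}_1 + \tilde{\sigma}_0 \tilde{\partial}_1) \Phi_1^{-1} = \Phi_1 \Phi_1^{-1} = \Id,
\]
using $\sigma_0' \tilde{\partial}_1 = \tilde{\sigma}_0 \Phi_0^{-1} \tilde{\partial}_1 = \tilde{\sigma}_0 \tilde{\partial}_1 \Phi_1^{-1}$ to combine the two summands on the left. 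This exhibits a genuine partial chain contraction of $\Q[\Z_2] \otimes_{\Z[\Z_2]} \tilde{D}_*$ in degrees $0$ and $1$, so the homology vanishes there and the lemma follows.
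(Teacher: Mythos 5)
Your proof is correct and takes essentially the same route as the paper: build a partial chain contraction $(\sigma_0,\sigma_1)$ of the integral mapping cone in degrees $0$ and $1$, lift it over $\Z[\Z_2]$, and invoke Lemma~\ref{lemma:algebraic-fact-square-matrices} to invert the resulting endomorphisms after tensoring with $\Q[\Z_2]$. Your post-composition by $\Phi_i^{-1}$ to produce a genuine partial chain contraction over $\Q[\Z_2]$ (together with the verification $\tilde\partial_1\Phi_1 = \Phi_0\tilde\partial_1$) tidies up the last step, which the paper phrases by asserting directly that the lifted maps already form a partial chain contraction; and working with the mapping cone of $f_\#$ rather than the relative complex $C_*(Y,X;\Z[\Z_2])$ is only a cosmetic difference.
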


\begin{proof}
  The zeroth and first relative homology groups vanish, that is $H_i(Y,X;\Z)=0$
  for $i=0,1$.  Thus the cellular chain complex $(C_*(Y,X;\Z),\partial_*)$
  admits a partial chain contraction: writing $C_*$ to abbreviate $C_*(Y,X;\Z)$,
  the partial chain homotopy comprises maps $s_0 \colon C_0 \to C_1$ and $s_1
  \colon C_1 \to C_2$ such that $\partial \circ s_0 = \Id \colon C_0 \to C_0$
  and $\partial \circ s_1 + s_0 \circ \partial = \Id \colon C_1 \to C_1$.

  To see this, follow the proof of the fundamental lemma of homological algebra:
  for each basis element $x_i \in C_0$, choose a lift $y_i \in C_1$ with
  $\partial y_i = x_i$, and define $s_0(x_i) =y_i$, and then extend linearly.
  Such a $y_i$ exists since $\partial \colon C_1 \to C_0$ is surjective.  Then
  for each generator $z_i \in C_1$, consider $\Id(z_i) - s_0 \circ
  \partial(z_i)$. Since
  \[
    \partial (\Id(z_i) -s_0\circ \partial(z_i)) = \partial (z_i)
    - \partial \circ s_0 \circ \partial (z_i) = \partial (z_i) - \Id \circ
    \partial (z_i) =0,
  \]
  we have that $\Id(z_i) -s_0\circ \partial(z_i)$ is a cycle.  Hence there is a
  $v_i \in C_2$ such that $\partial v_i = \Id(z_i) -s_0\circ \partial(z_i)$.
  Define $s_1(z_i):= v_i$, and extend linearly to define $s_1$ on all of $C_1$.
  Then $\partial \circ s_1 (z_i)=  \partial v_i = \Id(z_i) -s_0\circ
  \partial(z_i)$ for every generator $z_i$ of $C_1$.  This completes the
  construction of a partial chain homotopy.

  Now consider the chain complex $D_*:= C_*(Y,X;\Z[\Z_2]) \cong C_*(Y^2,X^2)$,
  the relative chain complex of the 2-fold covering spaces.  Since the cellular
  chain groups are finitely generated free modules, the partial chain
  contraction $s_0,s_1$ lifts to maps $\wt{s}_0 \colon D_0 \to D_1$ and
  $\wt{s}_1 \colon D_1 \to D_2$.

  We claim that these maps induce a partial chain contraction after tensoring
  over $\Q[\Z_2]$.  To see the claim, the maps
  \[
    F:= \wt{s}_0 \circ \partial \colon D_0 \to D_0 \quad\text{and}\quad
    G:= \partial \circ \wt{s}_1 + \wt{s}_0 \circ \partial \colon D_1 \to D_1
  \]
  are endomorphisms of the free modules $D_0$ and $D_1$ respectively, that
  become automorphisms when tensored over~$\Z$.  That is,
  \begin{align*}
    \Id \otimes F \colon \Z \otimes_{\Z[\Z_2]} D_0 &\to \Z \otimes_{\Z[\Z_2]} D_0,
    \\
    \Id \otimes G \colon \Z \otimes_{\Z[\Z_2]} D_1 &\to \Z \otimes_{\Z[\Z_2]} D_1
  \end{align*}
  are isomorphisms.  By Lemma~\ref{lemma:algebraic-fact-square-matrices},
  \begin{align*}
    \Id \otimes F \colon \Q[\Z_2] \otimes_{\Z[\Z_2]} D_0
    & \to \Q[\Z_2] \otimes_{\Z[\Z_2]} D_0,
    \\
    \Id \otimes G \colon \Q[\Z_2] \otimes_{\Z[\Z_2]} D_1
    & \to \Q[\Z_2] \otimes_{\Z[\Z_2]} D_1
  \end{align*}
  are also isomorphisms.  Thus \[\Id \otimes \wt{s}_i \colon \Q[\Z_2]
  \otimes_{\Z[\Z_2]} D_i \to \Q[\Z_2] \otimes_{\Z[\Z_2]} D_{i+1}\] is a partial
  chain contraction and
  \[
    H_i(\Q[\Z_2] \otimes_{\Z[\Z_2]} D_*)
    = H_i(Y^2,X^2;\Q) \cong H_i(Y,X;\Q[\Z_2])=0
  \]
  for $i=0,1$.  The lemma follows from the long exact sequence of the pair
  ~$(Y,X)$ (Proposition~\ref{prop:les-pairs}).
\end{proof}

Our next lemma requires some facts about finitely generated modules over
commutative PIDs, which we remind the reader of in order to establish notation.

\begin{defn}
  Let $S$ be a commutative PID with quotient field $Q$, and let $A$ be a
  finitely generated module over~$S$.

  \begin{enumerate}

    \item $TA:=\{ a \in A \text{ such that } sa= 0 \text{ for some } s \in S\}$,
    the $S$-torsion submodule of~$A$.

    \item  $A^\wedge:= \Hom_S(A, Q/S)$.  If $A$ is torsion (i.e.\ $A=TA$), then $A$
    and $A^\wedge$ are non-canonically isomorphic.

    \item  Given a map of $S$ modules $f\colon A \to B$,  we abbreviate
    $f|_{TA}\colon TA \to TB$ by $f|_T$. We emphasize that $\coker(f|_T)$ is
    therefore isomorphic to $TB/ \Imm(f|_{TA})$, not $B/ \Imm(f|_{TA})$.

    \item We say that $A$ has \emph{generating rank  $k$ over $S$} if $A$ is
    generated as an $S$-module by $k$ elements but not by $k-1$ elements, and
    write $\grr_S A=k$. It follows immediately from the definition that if $A$
    surjects onto $B$ then $\grr_S B \leq \grr_S A$. It is also true and easy to
    check that if $B \leq A$ then $\grr_S B \leq \grr_S A$, though this is less
    obvious and uses that $S$ is a commutative~PID.

    \item By the fundamental theorem of finitely generated modules over PIDs,
    there exist $j, k \in \mathbb{N}$ and  elements $s_1, \dots, s_k \in S$ such
    that there is a (non-canonical) isomorphism \[A \cong S^j \oplus TA \cong
    S^j \oplus \bigoplus_{i=1}^k S/ \langle s_i \rangle.\] When $j>0$ we say
    that the \emph{order} of $A$ is $|A|=0$ and when $j=0$ we say that the order
    of $A$ is $|A|= \prod_{i=1}^k s_i$. This is well-defined up to
    multiplication by units in $S$. The key property of order we use is that if
    $f\colon A \to B$ is a map of $S$-modules with $\ker(f)$ torsion, then
    $|\Imm(f)|= |A|/|\ker(f)|$.

  \end{enumerate}
\end{defn}

We will need the following basic lemma in the proof of Theorem~\ref{thm:general}, noting for future use that $\mathbb{F}[t^{\pm1}]$ is a Euclidean domain whenever $\mathbb{F}$ is a field.

\begin{lem}\label{lem:intersecttorsion}
Let $A$ be a finitely generated module over a Euclidean domain $S$, hence non-canonically isomorphic to $S^{m} \oplus TA$ for some $m \geq 0$.
Suppose that $B$ is a submodule of $A$ such there exists $C \subseteq A/B$ of generating rank $\ell>m$.
Then there exists a module  $C' \subseteq TA/ (B \cap TA)$ of generating rank $\ell-m$ such that the order of $C'$ divides the order of $C$.
\end{lem}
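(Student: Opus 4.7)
The plan is to produce $C'$ by intersecting $C$ with the canonical copy of $TA/(B\cap TA)$ inside $A/B$. The composition $TA \hookrightarrow A \twoheadrightarrow A/B$ has kernel $B \cap TA$, inducing an injection $\iota \colon TA/(B \cap TA) \hookrightarrow A/B$ with image $(B+TA)/B$. The key observation is that the cokernel of $\iota$ is
\[
(A/B)/\bigl((B+TA)/B\bigr) \cong A/(B+TA),
\]
which is a quotient of $A/TA \cong S^m$ and hence has generating rank at most $m$; by the remark earlier in the excerpt on monotonicity of $\grr_S$ under subobjects, every submodule of $A/(B+TA)$ also has generating rank at most $m$.

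Now define $C_0 := \iota^{-1}\bigl(C \cap \iota(TA/(B\cap TA))\bigr) \subseteq TA/(B\cap TA)$, which fits into a short exact sequence
\[
0 \to C_0 \to C \to \overline{C} \to 0,
\]
where $\overline{C}$ denotes the image of $C$ in $A/(B+TA)$. By the previous paragraph $\grr_S \overline{C} \leq m$, and lifting generators of $\overline{C}$ back to $C$ yields $\grr_S C \leq \grr_S C_0 + \grr_S \overline{C}$, so $\grr_S C_0 \geq \ell - m$. To arrange exact generating rank $\ell - m$, invoke the structure theorem and write $C_0 \cong \bigoplus_{i=1}^{s} S/(c_i)$ with $c_1 \mid c_2 \mid \cdots \mid c_s$ and $s \geq \ell-m$; take $C'$ to be the submodule corresponding to $\bigoplus_{i=1}^{\ell-m} S/(c_i)$. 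The divisibility chain $c_i \mid c_{i+1}$ prevents coalescence of summands, so $\grr_S C' = \ell-m$.

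For the order claim, if $|C|=0$ there is nothing to prove since every element of $S$ divides $0$. Otherwise $C$ is torsion, so $C_0$ and $\overline{C}$ are also torsion, and the standard multiplicativity of order in short exact sequences of finitely generated torsion modules over a PID gives $|C| = |C_0| \cdot |\overline{C}|$. Combined with $|C'| = \prod_{i=1}^{\ell-m} c_i$ dividing $|C_0| = \prod_{i=1}^{s} c_i$ from the invariant-factor description, this yields $|C'| \mid |C|$.

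I do not anticipate any serious obstacle. The two key ingredients, namely that the cokernel $A/(B+TA)$ is a quotient of the free part $A/TA \cong S^m$ and that invariant factors of $C_0$ control both the generating rank and the order of a chosen submodule, are routine consequences of the structure theorem; the only bookkeeping needed is to keep the torsion and free parts of $A$ distinct and to use the paper's conventions for $\grr_S$ and order.
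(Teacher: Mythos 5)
Your proof is correct, and it takes a genuinely different route from the paper's. The paper's argument is computational: it fixes a splitting $A \cong S^m \oplus TA$, writes each of the $\ell$ chosen preimages of generators of $C$ in coordinates, row-reduces the $\ell \times m$ matrix of free-part coordinates into Hermite normal form (which is where the Euclidean hypothesis on $S$ is used), and harvests at least $\ell-m$ purely-torsion generators from the zero rows; the divisibility of orders is then immediate since the resulting $C'$ is literally a submodule of $C$. Your argument is structural, built on the canonical short exact sequence $0 \to TA/(B \cap TA) \to A/B \to A/(B+TA) \to 0$: you intersect $C$ with the left-hand term, use subadditivity of generating rank across the induced sequence $0 \to C_0 \to C \to \overline{C} \to 0$ together with the fact that $A/(B+TA)$ is a quotient of $A/TA \cong S^m$ to get $\grr_S C_0 \geq \ell-m$, then truncate the invariant-factor decomposition of $C_0$ (which is torsion, being a submodule of the torsion module $TA/(B\cap TA)$) for the exact count, and finish the order claim via multiplicativity of order in short exact sequences of torsion modules. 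What your version buys is generality and cleanliness: it works verbatim over any PID, and the paper's bookkeeping about which rows vanish and why $\alpha'_{m+1},\dots,\alpha'_\ell$ give generating rank \emph{exactly} $\ell-m$ is replaced by a transparent invariant-factor truncation. Both proofs rest on the same underlying observation -- the rank-$m$ free quotient can absorb at most $m$ of the $\ell$ generators -- but yours packages it as a module-theoretic exact-sequence argument rather than an explicit matrix reduction.
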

\begin{proof}
Let $a_1, \dots, a_{\ell}$ be elements of $A$ such that $[a_1], \dots, [a_\ell]$ generate $C \subseteq A/B$. Pick a decomposition of $A\cong S^m \oplus TA$  and use it to write each $a_i= (s_i^j)_{j=1}^m \oplus \alpha_i$ for $(s_i^j)_{j=1}^m \in S^m$ and $\alpha_i \in TA$.
Since $S$ is a Euclidean domain, row-reduction of the $\ell \times m$ matrix $M$ with $M_{i,j}:=s_{i}^j$ yields a matrix $M'$ in Hermite normal  form. Since $\ell>m$, we have that $M'$ contains at least $\ell-m$ rows of zeros.
By taking the corresponding linear combinations of the $a_i$, we obtain a new collection of elements $a'_i= (t_i^j)_{j=1}^m \oplus \alpha'_i$ such that the collection of $[a'_i]$ generate $C$. Moreover, for $i>m$ we have that $t_i^j=0$ for all $j=1, \dots, m$ and so $a'_i= \alpha'_i \in TA$. Note that for $i>m$ we have that $\alpha_i' \neq 0$, since if $\alpha_i=a'_i=0$ then the generating rank of $C$ would be strictly less than $\ell$. For similar reasons, we see that the generating rank of the submodule of $TA/ (B \cap TA)$ generated by $\alpha'_{m+1}, \dots, \alpha'_{\ell}$ is exactly $\ell-m$. So let $C'$ be this submodule. Finally, the order of $C'$ divides the order of $C$ because $C'$ is a submodule of $C$.
\end{proof}

We will use the next lemma twice, once in the proof of
Proposition~\ref{prop:homology}, and then again in Step 3 of the proof of
Theorem~\ref{thm:general} in Section~\ref{section:proof-of-main-theorem}.

\begin{lem}\label{lemma:moregeneralhomology}
  Let $X$ be a 4-manifold with boundary $\partial X=Y$. Let $S$ be a commutative
  PID with quotient field $Q$ and let $A := S^n$ for some $n \in \mathbb{N}$.
  Suppose there is a representation $\Phi$ of the fundamental groupoid of $Y$
  into $\Aut(A)$ that extends over~$X$, as in
  Section~\ref{section:disconnected}. Consider the long exact sequence
  (Proposition~\ref{prop:les-pairs}) of $S$-modules of the pair, with
  coefficients taken in~$A$:
  \begin{align*}
    \cdots \to H_2(X) \xrightarrow{j_2} H_2(X,Y) \xrightarrow{\partial} H_1(Y) \xrightarrow{i_1} H_1(X) \xrightarrow{j_1} H_1(X,Y)  \to \cdots.
  \end{align*}
  Suppose that $H_1(X,Y)$ is torsion. Then $\ker(j_1|_T) \cong \coker(j_2|_T)$.
\end{lem}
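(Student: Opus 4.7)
The plan is to construct the isomorphism via a diagram chase in the long exact sequence, combining the algebraic structure of the short exact sequences obtained by splitting the LES with the \emph{Bockstein} six-term exact sequences, and using the topological input (that $X$ is a 4-manifold) to ensure that certain torsion-lifting obstructions vanish.

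First I would observe that, because $\Imm(j_1) \subseteq H_1(X,Y)$ is torsion by hypothesis, exactness at $H_1(X)$ gives $\ker(j_1) = \Imm(i_1)$ and hence
\[
\ker(j_1|_T) = TH_1(X) \cap \Imm(i_1) = T\Imm(i_1),
\]
the torsion submodule of $\Imm(i_1)$.

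Next I would split the LES into two short exact sequences,
\[
0 \to \Imm(j_2) \to H_2(X,Y) \xrightarrow{\partial} \ker(i_1) \to 0 \qquad (\mathrm{I})
\]
\[
0 \to \ker(i_1) \to H_1(Y) \xrightarrow{i_1} \Imm(i_1) \to 0 \qquad (\mathrm{II}),
\]
and apply the Bockstein six-term exact sequences to obtain connecting maps $\delta_{\mathrm{I}} \colon T\ker(i_1) \to \Imm(j_2) \otimes_S Q/S$ and $\delta_{\mathrm{II}} \colon T\Imm(i_1) \to \ker(i_1) \otimes_S Q/S$. I would then define a map $\phi \colon \ker(j_1|_T) \to \coker(j_2|_T)$ by diagram chase: given $x \in T\Imm(i_1)$, pick $y \in H_1(Y)$ with $i_1(y) = x$ and $s \in S \setminus \{0\}$ with $sx = 0$; after modifying $y$ by an element of $\ker(i_1)$ if necessary to ensure $sy \in T\ker(i_1)$, and replacing $s$ by a suitable multiple $ks$ to kill $\delta_{\mathrm{I}}(sy) = k \cdot \delta_{\mathrm{I}}(sy)$ (which is possible since every element of $\Imm(j_2) \otimes_S Q/S \cong (Q/S)^{\rk \Imm(j_2)}$ has a nonzero annihilator), we obtain $w \in TH_2(X,Y)$ with $\partial(w) = ksy$. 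Set $\phi(x) = [w] \in \coker(j_2|_T)$.

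I would then verify that $\phi$ is independent of all choices and is an $S$-module homomorphism, construct a symmetric inverse using $\delta_{\mathrm{II}}$, and conclude that $\phi$ is an isomorphism. The main obstacle is precisely arranging that the element $w$ can be chosen in $TH_2(X,Y)$: this requires both modifying $y$ within its coset modulo $\ker(i_1)$ to make $sy$ torsion in $H_1(Y)$ and enlarging $s$ to clear the Bockstein obstruction in $\Imm(j_2) \otimes_S Q/S$. The hypothesis that $X$ is a 4-manifold enters here, because Poincar\'e--Lefschetz duality combined with universal coefficients constrains the free ranks of the modules appearing in the LES in a way that makes the two successive reductions compatible and forces the same adjustments to work simultaneously; without this topological input the statement would fail, as one can construct purely algebraic exact sequences of finitely generated $S$-modules satisfying the hypothesis in which the conclusion is false.
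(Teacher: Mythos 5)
Your approach is genuinely different from the paper's, and as stated it has a gap that I do not see how to repair without essentially reinventing the paper's argument.

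The paper's proof is \emph{not} a diagram chase. It first shows that, because $H_1(X,Y)$ is $S$-torsion, the Bockstein connecting map $\beta\colon \Ext^0_S(H_1(X,Y),Q/S)\to\Ext^1_S(H_1(X,Y),S)$ is an isomorphism (injectivity of $Q$ kills $\Ext^1_S(-,Q)$ and torsion kills $\Ext^0_S(-,Q)$). Combining this with Poincar\'e--Lefschetz duality and universal coefficients, one obtains a \emph{commutative square} of module isomorphisms
\[
TH_2(X)\cong TH_1(X,Y)^\wedge,\qquad TH_2(X,Y)\cong TH_1(X)^\wedge,
\]
which identifies $j_2|_T$ with $(j_1|_T)^\wedge$. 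From there the claim becomes the purely algebraic statement $\coker(f^\wedge)\cong\ker(f)$ for $f=j_1|_T$ a map of torsion modules, which the paper proves by constructing an explicit surjection and matching orders. The role of duality here is to produce an isomorphism of maps, not a constraint on ranks.

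Your diagram-chase map $\phi\colon\ker(j_1|_T)\to\coker(j_2|_T)$, by contrast, is not obviously well-defined. Fixing $x$ and your choices of $y,s,k$, two lifts $w,w'\in TH_2(X,Y)$ of $ksy$ differ by an element of $\ker(\partial)\cap TH_2(X,Y)=T\Imm(j_2)$. But the target is $\coker(j_2|_T)=TH_2(X,Y)/j_2(TH_2(X))$, and in general $j_2(TH_2(X))\subsetneq T\Imm(j_2)$: a non-torsion element of $H_2(X)$ may have torsion image under $j_2$. (The paper is explicit that $\coker(j_2|_T)$ is $TH_2(X,Y)/\Imm(j_2|_{TH_2(X)})$, not $TH_2(X,Y)/T\Imm(j_2)$.) So $[w]$ is only determined modulo $T\Imm(j_2)/j_2(TH_2(X))$, which can be nonzero. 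On top of this, you have not checked independence of $y$, $s$, $k$, and the statement that Poincar\'e--Lefschetz duality ``constrains the free ranks\ldots in a way that makes the two successive reductions compatible'' is too vague to close any of these gaps --- it neither identifies which ranks are constrained nor how that would kill the quotient $T\Imm(j_2)/j_2(TH_2(X))$. To fix the argument you would need precisely the dual identification of $j_2|_T$ with $(j_1|_T)^\wedge$ that the paper builds, at which point the diagram chase becomes unnecessary. One small additional point: exactness at $H_1(X)$ gives $\ker(j_1)=\Imm(i_1)$ unconditionally; the torsion hypothesis on $H_1(X,Y)$ is not needed for that equality.
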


\begin{proof}
  Unless otherwise specified, all homology groups are taken with coefficients in
  $A$. First, we argue that the Bockstein homomorphism $\beta$ is an isomorphism.
  This Bockstein arises in the long exact sequence of $\Ext$
  groups~\cite[IV,~Prop.~7.5]{HS97} associated to the short exact sequence $0
  \to S \to Q \to Q/S \to 0$, as follows:
  \[
    \Ext^0_S(H_1(X,Y), Q) \to \Ext^0_S(H_1(X,Y), Q/S)
    \xrightarrow{\beta} \Ext^1_S(H_1(X,Y),S)\to \Ext^1_S(H_1(X,Y), Q).
  \]
  Since $Q$ is an injective $S$-module, $\Ext^1_S(H_1(X,Y),Q)$ vanishes, and
  $\Ext^0_S(H_1(X,Y),Q)=0$ because $H_1(X,Y)$ is torsion.  Thus $\beta$ is an
  isomorphism.

  Therefore Poincar{\'e} duality, universal coefficients, and the
  (inverse of the) Bockstein homomorphism together induce natural isomorphisms
  fitting into a commutative diagram:
  \[
    \xymatrix@R+.3cm@C-2.5ex{
      TH_2(X) \ar[r]^-{P.D.}_-{\cong} \ar[d]^{j_2|_T} & TH^2(X,Y)  \ar[r]^-{U.C.}_-{\cong} \ar[d] &\Ext^1_S(H_1(X,Y),S) \ar[r]^-{\beta^{-1}}_-{\cong} \ar[d] &\Ext^0_S(H_1(X,Y), Q/S) \ar[r]_-{\cong} \ar[d] & TH_1(X,Y)^\wedge \ar[d]_-{(j_1|_T)^\wedge} \\
      TH_2(X,Y) \ar[r]^-{P.D.}_-{\cong} & TH^2(X) \ar[r]^-{U.C.}_-{\cong} & \Ext^1_S(H_1(X), S)
      \ar[r]^-{\beta^{-1}}_-{\cong} & \Ext^0_S(H_1(X), Q/S) \ar[r]_-{\cong} & TH_1(X)^{\wedge}
    }
  \]

  By the naturality of the above sequence of maps, the following square commutes
  and so $ \coker(j_2|_T) \cong \coker((j_1|_T)^\wedge)$.
  \begin{equation}\label{eqn:squaregeneral}
    \vcenter{\xymatrix @R+0.3cm @C+0.3cm {
      TH_2(X) \ar[r]^{j_2|_T} \ar[d]_{\cong} & TH_2(X,Y) \ar[d]^{\cong}\\
      TH_1(X,Y)^{\wedge}  \ar[r]^-{(j_1|_T)^\wedge} & TH_1(X)^{\wedge}
    }}
  \end{equation}
  Now let $H := \ker(j_1|_T) \leq TH_1(X)$ and define $\Phi\colon
  \coker((j_1|_T)^\wedge) \to H^\wedge$ by $\Phi([f])= f|_{H}.$ Observe that
  $\Phi$ is well-defined, since for any $g \in TH_1(X,Y)^\wedge$ we have
  \[
    (j_1|_T)^\wedge(g)(x)= g(j_1(x))= g(0)=0
    \quad\text{for all } x \in H= \ker(j_1|_T).
  \]
  Also, $\Phi$ is onto: given any $f \in H^\wedge$ (i.e.\ a map $f\colon H \to
  Q/S$), since  $H \leq TH_1(X)$, and using that $Q/S$ is an injective
  $S$-module, we can extend $f$ to a map $g\colon TH_1(X) \to Q/ S$, and will
  have that $\Phi([g])=f$.  Therefore, in order to show that $\Phi$ is an
  isomorphism it is enough to show that $ |\coker((j_1|_T)^\wedge)|=
  |H^\wedge|$. Note that
  \begin{equation}\label{eqn:coker1general}
    | \coker((j_1|_T)^\wedge)|= \frac{|TH_1(X)^\wedge|}{|\Imm((j_1|_T)^\wedge)|}
    = \frac{ |TH_1(X)^\wedge|\,|\ker((j_1|_T)^\wedge)|}{|TH_1(X,Y)^\wedge)|}.
  \end{equation}
  Also, $(j_1|_T)^{\wedge}(f)=0$ if and only if $f(j_1|_T(x))=0$ for all $x \in
  TH_1(X)$ if and only if $f$ vanishes on $\Imm(j_1|_T)$, so
  \[
    |\ker((j_1|_T)^\wedge)|= \frac{|TH_1(X,Y)^\wedge)|}{|\Imm(j_1|_T)|}.
  \]
  Therefore we can rewrite Equation~(\ref{eqn:coker1general}) as
  \[
    |\coker((j_1|_T)^\wedge)|= \frac{|TH_1(X)^\wedge|}{|\Imm(j_1|_T)|}
    =\frac{|TH_1(X)|}{|\Imm(j_1|_T)|} = |H|= |H^{\wedge}|.
  \]
  So $\Phi\colon \coker((j_1|_T)^\wedge) \to \ker(j_1|_T)^\wedge$ is an
  isomorphism. Since $\ker(j_1|_T)$ is a torsion $S$-module, there is an (albeit
  non-canonical) identification $\ker(j_1|_T) \cong (\ker(j_1|_T))^\wedge$, and
  so we have as desired that
  \[
    \coker(j_2|_T) \cong \coker((j_1|_T)^\wedge)
    \cong_\Phi \ker(j_1|_T)^\wedge \cong \ker(j_1|_T).
    \qedhere
  \]
\end{proof}

Note that in particular this implies that $\ker(j_1|_T)_p  \cong
\coker(j_2|_T)_p$ for any prime $p$, where for a $\Z$-module $G$ we write $G_p$
for the $p$-primary part.

Next we apply the control on homology of double covers gained in Lemma~\ref{lem:chain-homotopy-lifting} along with the homological algebra of Lemma~\ref{lemma:moregeneralhomology}  to  manifolds $M^3$ and $V^4$.

\begin{prop}\label{prop:homology}
  Let $M$ be a homology $S^1 \times S^2$, and let $V$ be a connected 4-manifold with
  boundary $M$ such that the inclusion induced map $H_1(M) \xrightarrow{i_*}
  H_1(V)$ an isomorphism. Suppose that $H_1(M^2) \cong \Z \oplus G$, where $G$
  is torsion. Then for any prime $p$ the $p$-primary part of $TH_1(M^2)/
  \ker(TH_1(M^2) \to TH_1(V^2))$ has generating rank at least
  $n:=\frac{m-2\chi(V)}{2}$, where $m$ denotes the generating rank of the
  $p$-primary part of $TH_1(M^2)$.
\end{prop}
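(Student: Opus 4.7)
The plan is to identify the image $I := \Imm(TH_1(M^2)_p \to TH_1(V^2)_p)$ with $\coker(j_2|_T)_p$ and the kernel $K := \ker(TH_1(M^2)_p \to TH_1(V^2)_p)$ with $\coker(j_2)_p$, where $j_2 \colon H_2(V^2) \to H_2(V^2, M^2)$ is the map in the integral long exact sequence of the pair $(V^2, M^2)$. A block decomposition of $j_2 \otimes \Z_{(p)}$ will then give $\grr(K) \leq \grr(I) + 2\chi(V)$; combining with the short exact sequence $0 \to K \to G_p \to I \to 0$, where $G_p := TH_1(M^2)_p$ has generating rank $m$, yields $m \leq 2\grr(I) + 2\chi(V)$, hence $\grr(I) \geq (m-2\chi(V))/2$, as required.

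First I would verify the hypotheses of Lemma~\ref{lem:chain-homotopy-lifting} for the inclusion $M \hookrightarrow V$ to deduce that $H_1(M^2;\Q) \twoheadrightarrow H_1(V^2;\Q)$. The $\Q$-coefficient long exact sequence of the pair, combined with the fact that $H_0(M^2;\Q) \to H_0(V^2;\Q)$ is an isomorphism, then forces $H_1(V^2, M^2;\Q) = 0$, so $H_1(V^2, M^2;\Z)$ is torsion. The transfer map $H_1(V;\Q) \hookrightarrow H_1(V^2;\Q)$ pins down $b_1^{\Q}(V^2) = 1$, and Poincar\'e--Lefschetz duality with the above vanishing gives $b_3^{\Q}(V^2) = 0$. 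Then $\chi(V^2) = 2\chi(V)$ yields $b_2^{\Q}(V^2) = b_2^{\Q}(V^2, M^2) = 2\chi(V)$.

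Next I would invoke Lemma~\ref{lemma:moregeneralhomology} with trivial $\Z$-coefficients, whose torsion hypothesis is now verified, to obtain $\coker(j_2|_T) \cong \ker(j_1|_T)$, where $j_1 \colon H_1(V^2) \to H_1(V^2, M^2)$. Since $H_1(M^2;\Q) \to H_1(V^2;\Q)$ is an isomorphism, the $\Z$-summand of $H_1(M^2) = \Z \oplus G$ maps to $H_1(V^2) = \Z \oplus TH_1(V^2)$ with nonzero free component, so any torsion element of $\Imm(i_1)$ comes from $G = TH_1(M^2)$. Hence $\ker(j_1|_T)_p = I$ and $\coker(j_2|_T)_p \cong I$. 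Similarly, from the LES, $\partial \colon H_2(V^2, M^2) \to H_1(M^2)$ induces an isomorphism $\coker(j_2) \cong \ker(i_1) \subseteq G$, so $\coker(j_2)_p \cong K$.

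The heart of the argument is to localize $j_2$ at $p$. Because both $H_2(V^2)$ and $H_2(V^2, M^2)$ have $\Q$-rank $2\chi(V)$ and torsion maps to torsion, $j_2 \otimes \Z_{(p)}$ has block form $\begin{pmatrix} A & 0 \\ C & D \end{pmatrix}$ with $A \colon \Z_{(p)}^{2\chi(V)} \to \Z_{(p)}^{2\chi(V)}$ an isomorphism over $\Q$ and $D = j_2|_T$. A direct diagram chase then produces a short exact sequence
\[
  0 \to \coker(D)_p \to \coker(j_2)_p \to \coker(A)_p \to 0,
\]
and since $\coker(A)_p$ is a quotient of $\Z_{(p)}^{2\chi(V)}$ it has generating rank at most $2\chi(V)$. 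Subadditivity of generating rank along this sequence then delivers $\grr(K) \leq \grr(I) + 2\chi(V)$, completing the plan. The main obstacle is expected to be carefully justifying the identifications in the previous step and the block decomposition of $j_2$ at $p$; the final rank computation is then routine. As an alternative to the block decomposition, Lemma~\ref{lem:intersecttorsion} applied to the triple $(H_2(V^2, M^2) \otimes \Z_{(p)}, \Imm(j_2), K)$ yields the same inequality.
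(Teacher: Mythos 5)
Your proposal is correct, and it takes a genuinely different route from the paper's. Both proofs share the same preliminary ingredients — Lemma~\ref{lem:chain-homotopy-lifting} to control $H_*(V^2,M^2;\Q)$, the Euler characteristic count to bound $b_2(V^2)$, and Lemma~\ref{lemma:moregeneralhomology} to identify $\coker(j_2|_T)\cong\ker(j_1|_T)$ — but the endgame differs. The paper reduces to showing $\grr\bigl((H_1(M^2)/\ker(i_1))\otimes\Z_p\bigr)\geq n+1$, then runs a case analysis (either $\grr(\coker(j_2)\otimes\Z_p)\leq m-n$ or $\grr(\ker(j_1)\otimes\Z_p)\geq n+1$, with a proof by contradiction in the remaining case) culminating in a chain of inequalities that wraps back around. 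You instead work directly at the level of the two quantities $I$ and $K$, and produce \emph{two} short exact sequences: the tautological $0\to K\to G_p\to I\to 0$, and the sequence $0\to\coker(j_2|_T)_p\to\coker(j_2)_p\to\coker(A)_p\to 0$ extracted from a block-triangular decomposition of $j_2\otimes\Z_{(p)}$ over the free/torsion splitting. Subadditivity of generating rank along both sequences then gives $m\leq\grr(K)+\grr(I)\leq 2\grr(I)+2\chi(V)$ in one stroke, which is cleaner and avoids the contradiction. Your second SES does the same work as the paper's inequality $\grr(\coker(j_2)\otimes\Z_p)\leq b_2(V^2)+\grr(\coker(j_2|_T)\otimes\Z_p)$, just packaged as an exact sequence rather than a chain of $\leq$'s; and, as you note, it can also be read off from Lemma~\ref{lem:intersecttorsion}.

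One place where your proof is more careful than the paper's: both arguments need $\ker(i_1)$ to be torsion (equivalently, $H_1(M^2;\Q)\to H_1(V^2;\Q)$ injective) to make the various identifications between torsion images/kernels line up, and the paper's proof as written does not establish this — it only shows $b_1^\Q(V^2)\leq 1$ and surjectivity on $\Q$-homology. You supply the missing input via the transfer map $H_1(V;\Q)\hookrightarrow H_1(V^2;\Q)$, which forces $b_1^\Q(V^2)=1$ and hence injectivity, and also pins down $b_2(V^2)=b_2(V^2,M^2)=2\chi(V)$ exactly; the paper only needs (and only proves) the inequality $b_2(V^2)\leq 2\chi(V)$. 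For your block decomposition one does need $A\colon\Z_{(p)}^{2\chi(V)}\to\Z_{(p)}^{2\chi(V)}$ to be injective (to get exactness of your second SES at the left), which follows from $A\otimes\Q=j_2\otimes\Q$ being a surjection between equal-dimensional $\Q$-vector spaces; this in turn is where your exact computation of $b_2$ is used. So the extra work you do is not wasted.
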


\begin{proof}
  Observe that $H_i(V, M; \Z)=0$ for $i=0, 1$. It follows from
  Lemma~\ref{lem:chain-homotopy-lifting} that
  \[
    H_i(V^2, M^2; \Q) \cong H_i(V, M; \Q[\Z_2]) =0 \quad\text{for } i=0,1.
  \]
  Therefore $\dim H_1(V^2, \Q) \leq \dim H_1(M^2, \Q) =1$ and
  \[
    \dim H_3(V^2; \Q)= \dim H^3(V^2; \Q)= \dim H_1(V^2, M^2; \Q)=0.
  \]
  Also note that
  \[
    2\chi(V)=\chi(V^2)= 1 - \dim H_1(V^2, \Q)+ \dim H_2(V^2, \Q).
  \]
  We therefore have that $\dim H_2(V^2, \Q) = 2\chi(V) + \dim H_1(V^2, \Q) - 1$
  is at most~$2\chi(V)$. Now consider the following long exact sequence:
  \[
    \dots \to H_2(V^2) \xrightarrow{j_2} H_2(V^2, M^2)
    \xrightarrow{\partial} H_1(M^2) \xrightarrow{i_1} H_1(V^2)
    \xrightarrow{j_1} H_1(V^2, M^2) \dots.
  \]
From above we have $H_1(V^2, M^2; \Q)=0$, and so by
  Lemma~\ref{lemma:moregeneralhomology} we have that $\grr \ker(j_1|_T)_p= \grr
  \coker(j_2|_T)_p$.  Moreover, for any finitely generated abelian group $A$, we have that
  $A \cong \Z^{a} \oplus \, TA$ for some $a \in \mathbb{N}_{\geq 0}$ and hence
  that $\grr(A \otimes \Z_p)= a + \grr(A_p)$.  In particular $\grr(\ker(j_1|_T) \otimes
  \Z_p)= \grr (\coker(j_2|_T) \otimes \Z_p)$.
  Combining  $\grr(A  \otimes \Z_p)= a + \grr(A_p)$  with $H_1(M^2) \cong \Z \oplus
  TH_1(M^2)$,  we  have that in order to show as desired that the generating rank of the
  $p$-primary part of $\left(TH_1(M^2)/ \ker(i_1|_T)\right)$ is at least $n$, it
  suffices to show
  \[
    \grr \left( (H_1(M^2)/ \ker(i_1)) \otimes \Z_p \right) \geq n+1.
  \]

  Note that $H_1(M^2)/ \ker(i_1)\cong \Imm(i_1) \cong \ker(j_1)$, so if
  $\grr\left(\ker(j_1) \otimes \Z_p\right) \geq n+1$ then we are done.
  Similarly, since $\grr\left(H_1(M^2) \otimes \Z_p\right) = m+1$ and
  \[
    \ker(i_1)= \Imm(\partial) \cong H_2(V^2, M^2)/ \ker(\partial)
    = H_2(V^2, M^2)/ \Imm(j_2)= \coker(j_2),
  \]
  if $\grr \left(\coker(j_2) \otimes \Z_p\right) \leq m-n$, then we are also
  done.

  So suppose for a contradiction $\grr\left( \coker(j_2) \otimes \Z_p\right)>
  m-n$ and $\grr\left( \ker(j_1)\otimes \Z_p \right) \leq n$.  Note that since
  $H_3(V^2,M^2) \cong H^1(V^2) \cong H^1(M^2) \cong H_2(M^2)$, the ranks of
  $H_2(V^2,M^2)$ and $H_2(V^2)$ coincide and so $H_2(V^2,M^2)$ splits (albeit
  non-canonically) as $\Z^{b_2(V^2)} \oplus TH_2(V^2,M^2)$. Thus we obtain our
  desired contradiction as follows:
  \begin{align*}
    2\chi(V)+n=m-n&< \grr \left( \coker(j_2) \otimes \Z_p \right) \\
    &\leq \grr \left((H_2(V^2,M^2)/ \Imm(j_2|_{T})) \otimes \Z_p \right)\\
    &= \grr \left(\Z^{b_2(V^2)} \otimes \Z_p \right)
    + \grr \left( (TH_2(V^2, M^2)/ \Imm(j_2|_T))  \otimes \Z_p\right) \\
    &= b_2(V^2) + \grr \left(\coker(j_2|_T )\otimes \Z_p\right) \\
    &= b_2(V^2)+ \grr \left(\ker(j_1|_T) \otimes \Z_p\right)\\
    &\leq b_2(V^2)+ \grr \left(\ker(j_1) \otimes \Z_p\right)\leq 2\chi(V)+n.
    \qedhere
  \end{align*}
\end{proof}

\section{A standard cobordism}\label{section:a-standard-cobordism}

In this section we study a standard cobordism $U$ between the zero-framed
surgery manifold of a connected sum of knots $M_K=M_{\#_{i=1}^N K_i}$ and the
disjoint union $Y:=\bigsqcup_{i=1}^N M_{K_i}$ of the zero-framed surgery
manifolds of the summands $K_i$. In particular we need to understand the
behavior of certain representations of the fundamental groups. We will also
explicitly choose the basepoints $\{x_i\}$ and paths $\{\tau_i\}$ necessary to
define twisted homology for disconnected manifolds, as discussed  in
Section~\ref{section:disconnected}.

\begin{figure}[htbp]
  \includegraphics[height=1.3in]{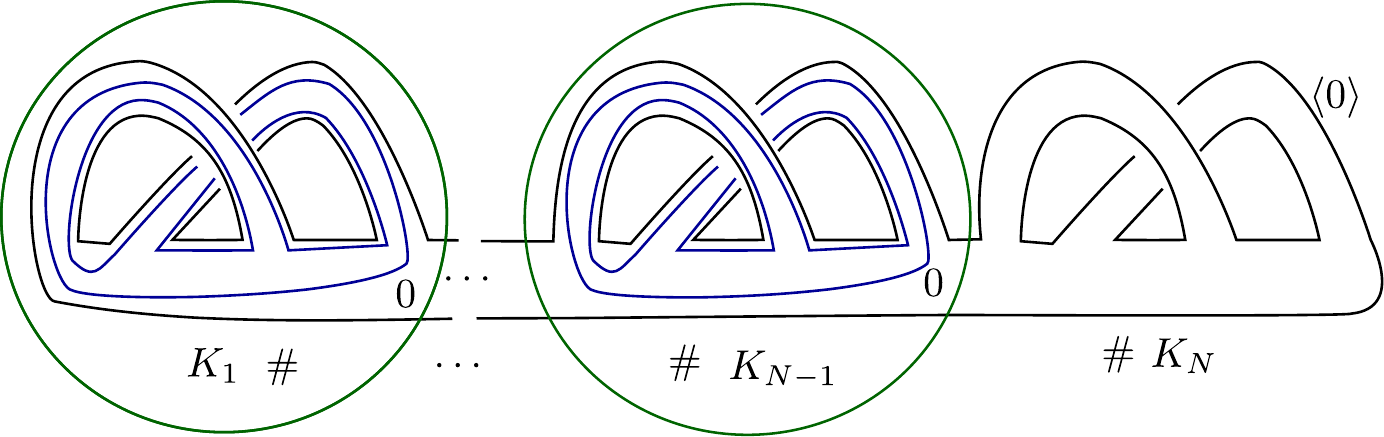}
  \caption{A Kirby diagram for $U$.}
  \label{fig:upart1}
\end{figure}

Let $U'$ be $M_K \times [0,1]$ with $(N-1)$ 0-framed 2-handles attached along
`longitudes of $K_i$.' A schematic of a relative Kirby diagram for $U'$ is given
by the black and blue curves of Figure~\ref{fig:upart1}. Note that we  depict
each $K_i$ as the boundary of a Seifert surface $G_i$, and hence $K=\#_{i=1}^N
K_i$ as the boundary of $\natural_{i=1}^N G_i$.  Since repeatedly sliding the
black 0-framed curve over the blue curves gives the standard surgery diagram for
$Y'$, we have $\partial_+(U')= Y':= \#_{i=1}^N M_{K_i}$. Now let $U''$ be  $Y'
\times [1,2]$  together with $(N-1)$ 3-handles attached along 2-spheres (whose
outline is indicated in green in Figure~\ref{fig:upart1}) so that $\partial_+
U''=Y= \bigsqcup_{i=1}^N M_{K_i}$. Let $U= U' \cup_{Y'} U''$.

We now consider the points, arcs, and closed curves shown in
Figure~\ref{fig:ualex}.

\begin{figure}[H]
  \includegraphics[height=1.75in]{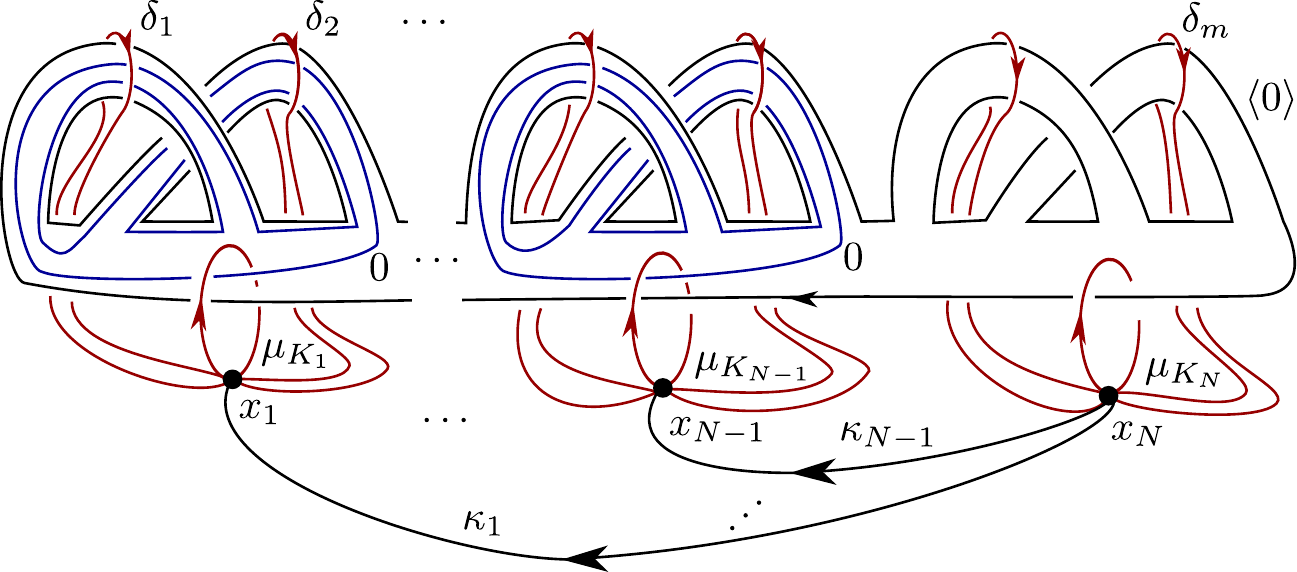}
  \caption{Basepoints $x_i$, arcs $\kappa_i$, and meridians $\mu_i$ for
    $i=1, \dots, N$ and closed curves $\delta_j$ for $j=1, \dots, m$ in $M_K$.}
  \label{fig:ualex}
\end{figure}

Note that the curves $\delta_j$ for $j=1, \dots, m$ form a normal generating set
for the first commutator subgroup of $\pi_1(M_K, x_N)$, when suitably based
using the arcs $\kappa_i$.

The attaching regions for the 2-handles of $U'$ avoid
\[
  \bigcup_{i=1}^N \mu_{K_i} \cup  \bigcup_{i=1}^{N-1} \kappa_i
  \cup \bigcup_{j=1}^m \delta_j \subset M_K,
\]
and so the points $x_i':= x_i \times \{1\}$,  arcs $\kappa_i':= \kappa_i \times
\{1\}$, and loops $\mu_{K_i}':= \mu_{K_i} \times \{1\}$  and $\delta_j':=
\delta_j \times \{1\}$ lie in $\partial_+U'=Y'$ for all $i=1, \dots, N$ and
$j=1, \dots, m$. Similarly, the attaching regions for the 3-handles of $U''$
avoid
\[
  \Big( \bigcup_{i=1}^N \mu_{K_i}' \cup \bigcup_{j=1}^m \delta_j' \Big)
  \subset Y',
\]
and so the loops $\mu_{K_i}'':= \mu_{K_i}' \times \{2\}$ and
$\delta_j'':=\delta_j' \times \{2\}$ lie in $\partial_+U''=Y$ for all $i=1,
\dots, N$ and $j=1, \dots, m$. For each $i=1, \dots N$ we have an
inclusion-induced map
\[
  \iota_i \colon \pi_1(M_{K_i}, x_i' \times \{2\}) \to \pi_1(U'', x_N')
  \quad\text{by }
  \beta\mapsto \kappa_i' \cdot (x_i' \times [1,2]) \cdot \beta \cdot
  \overline{(x_i' \times [1,2])} \cdot \overline{\kappa_i'}.
\]
Let $U=U' \cup_{Y'} U''$, and note that  we also have an inclusion-induced map
\[
  \iota\colon \pi_1(U'', x_N') \to \pi_1(U, x_N)
  \quad\text{by }
  \gamma \mapsto (x_N \times [0,1]) \cdot \gamma
  \cdot \overline{(x_N \times [0,1])}.
\]
In the language of Section~\ref{section:disconnected}, $\iota \circ \iota_i$ is
induced by the path from $x_N$ to $x_i''$ given by
\[
  \tau_i= (x_N \times [0,1]) \cdot \kappa_i' \cdot (x_i' \times [1,2]).
\]

We return to using the notation from Section~\ref{section:metabelian-homology} in order to state and prove the following.

\begin{prop}\label{prop:standardu}
  Let $K= \#_{i=1}^N K_i$ and  $U$ be the standard cobordism from $M_K$  to
  $\bigsqcup_{i=1}^N M_{K_i}$ as above. Let $p \in \mathbb{N}$ and choose maps
  $\chi_i\colon H_1(\Sigma_2(K_i)) \to \mathbb{Z}_p$ for $i=1$, \dots,~$N$, so
  $(\chi_i)_{i=1}^N\colon H_1(\Sigma_2(K)) \to \Z_p.$ Let $\mu_{K_N}$ be the
  preferred meridian for $K$ and  for  $i=1$, \dots,~$N$ let $\mu_{K_i}''$ be
  the preferred meridian for $K_i$. Then the map
  \[
    f^{K}_{(\chi_i)_{i=1}^N}\colon \pi_1(M_K, x_N) \to \Z \ltimes \Z_p
  \]
  extends uniquely to a map $F\colon \pi_1(U, x_N) \to \Z \ltimes \Z_p$. Also,
  the composition
  \[
    f_i\colon  \pi_1(M_{K_i}, x_i' \times \{2\}) \xrightarrow{\iota_i}
    \pi_1(U'', x_N') \xrightarrow{\iota} \pi_1(U, x_N) \xrightarrow{F}
    \Z \ltimes \Z_p
  \]
  satisfies $f_i= f^{K_i}_{\chi_i}$.
\end{prop}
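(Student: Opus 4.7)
My plan has three stages: (i)~confirm the extension $F$ exists and is unique, (ii)~reduce the verification $f_i = f^{K_i}_{\chi_i}$ to a comparison of first-coordinate and second-coordinate data separately, and (iii)~match these coordinates using the connect-sum decomposition of the branched double cover.

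For (i), since a $3$-handle attachment does not alter the fundamental group, the inclusion $Y' \hookrightarrow U''$ is a $\pi_1$-isomorphism, and Van~Kampen applied to $U = U' \cup_{Y'} U''$ gives $\pi_1(U, x_N) \cong \pi_1(U', x_N)$. By construction $\pi_1(U', x_N)$ is the quotient of $\pi_1(M_K, x_N)$ by the normal closure of the attaching circles $\lambda_{K_j}$, $j = 1,\dots, N-1$. In particular, $\pi_1(M_K, x_N) \to \pi_1(U, x_N)$ is surjective, so any extension of $f^K_{(\chi_i)_{i=1}^N}$ is uniquely determined. To see that an extension exists I would check that each $\lambda_{K_j}$ lies in the kernel: since $\lambda_{K_j}$ is null-homologous in $M_K$ we have $\varepsilon(\lambda_{K_j}) = 0$ and therefore $f^K_{(\chi_i)}(\lambda_{K_j}) = (1, \chi([\lambda_{K_j}]))$. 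A Seifert surface $G_j$ for $K_j$ can be chosen inside the $K_j$-summand region of $S^3 \ssm \nu(K)$, where it is disjoint from $K$, hence lifts to two disjoint copies in $\Sigma_2(K)$ each bounded by a lift of $\lambda_{K_j}$; this gives $[\lambda_{K_j}] = 0$ in $H_1(\Sigma_2(K))$, and the extension $F$ follows.

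The main work is the verification $f_i = f^{K_i}_{\chi_i}$. Using the formula $f^{K_i}_{\chi_i}(\beta) = (t^{\varepsilon_i(\beta)}, \chi_i([\mu_{K_i}^{-\varepsilon_i(\beta)}\beta]))$, I would check the two coordinates separately for $\beta \in \pi_1(M_{K_i}, x_i'')$. For the first coordinate, $\varepsilon \circ F \circ \iota \circ \iota_i$ is forced to coincide with $\varepsilon_i$ on $\pi_1(M_{K_i})$ because $\iota \circ \iota_i$ sends $\mu_{K_i}''$ to a loop homologous (through the trace $\mu_{K_i} \times [0,2] \subset U$, which is disjoint from the $2$- and $3$-handle cores) to the meridian $\mu_{K_i}$ of $K_i$ in $M_K$, which has $\varepsilon$-image $+1$. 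For the second coordinate I would invoke the natural splitting $H_1(\Sigma_2(K)) \cong \bigoplus_{j=1}^N H_1(\Sigma_2(K_j))$ arising from $\Sigma_2(\#_j K_j) = \#_j \Sigma_2(K_j)$; by construction $(\chi_i)_{i=1}^N$ restricts on each summand to $\chi_i$. The Hurewicz image of $[\mu_{K_i}^{-\varepsilon_i(\beta)} \beta]$ sits in the $K_i$-summand since $\beta$ and $\mu_{K_i}$ are based in the $K_i$-piece, and it agrees with the Hurewicz image computed using $\mu_{K_N}$ in place of $\mu_{K_i}$, provided $[\mu_{K_N}^{-1}\mu_{K_i}] = 0$ in $H_1(\Sigma_2(K))$. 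I would arrange this by replacing $\kappa_i$ within its homotopy class with a path lying on $\partial\nu(K)$, so that $\kappa_i \mu_{K_i} \kappa_i^{-1}$ and $\mu_{K_N}$ bound an annulus on the boundary torus.

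The main obstacle will be the careful basepoint-and-path bookkeeping, particularly the comparison between the $\mu_{K_N}$-based meridional subtraction in the formula for $f^K_{(\chi_i)}$ and the $\mu_{K_i}$-based subtraction in the formula for $f^{K_i}_{\chi_i}$. This amounts to exploiting both the freedom to choose the path $\tau_i$ and the naturality of the connect-sum decomposition of the branched double cover; no deep new ingredient is needed beyond what is already built into the construction of $U$.
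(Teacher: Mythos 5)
Your proposal is correct and takes essentially the same approach as the paper. The only cosmetic differences are that the paper justifies the extension by noting $\lambda_{K_j}\in\pi_1(M_K)^{(2)}$ dies in the metabelian target (rather than tracking $[\lambda_{K_j}]=0$ in $H_1(\Sigma_2(K))$ directly, which is the equivalent geometric version you give), and the paper reduces the verification $f_i=f^{K_i}_{\chi_i}$ to the meridian plus a normal generating set of Wirtinger-type commutator loops $\delta_j''$ before carefully lifting the connecting arcs $\kappa_i$ to the double cover; your coordinate-wise check with $\kappa_i\mu_{K_i}\overline{\kappa_i}=\mu_{K_N}$ encodes the same basepoint bookkeeping.
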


\begin{proof}
  Notice that $\pi_1(U)= \pi_1(M_K)/ \langle \lambda_{K_1}, \dots,
  \lambda_{K_{N-1}} \rangle$. Therefore, since each $\lambda_{K_i}$ bounds a
  subsurface of $\natural_{i=1}^N G_i$ and hence lies in $ \pi_1(M_K)^{(2)}$,
  the map $f^K_{(\chi_i)_{i=1}^N}$ extends uniquely as desired.

  Observe that for all $i=1, \dots, N$ we have
  \begin{align*}
    \iota(\iota_i( \mu_{K_i}''))&= \iota( \kappa_i' \cdot (x_i' \times [1,2])
    \cdot \mu_{K_i}'' \cdot \overline{(x_i' \times [1,2])}
    \cdot \overline{\kappa_i'})
    \\
    & = \iota(\kappa_i' \cdot \mu_{K_i}'\cdot  \overline{\kappa_i'})
    \\
    &=(x_N \times [0,1]) \cdot \kappa_i'\cdot \mu_{K_i}'
    \cdot \overline{\kappa_i'} \cdot \overline{(x_N \times [0,1])}
    = \mu_{K_N} \in \pi_1(U, x_N).
  \end{align*}
  Therefore
  \[
    f_i(\mu_{K_i}'')= F(\mu_{K_N})= f^{K}_{(\chi_i)_{i=1}^N}(\mu_{K_N})
    = (t,0)= f_{\chi_i}^{K_i}(\mu_{K_i}'') \in \Z \ltimes \Z_p.
  \]

  For each $i=1, \dots N$, every element $\gamma \in \pi_1(M_{K_i}, x_i'')$ can
  be written as $\gamma= (\mu_{K_i}'')^{\varepsilon(\gamma_i)} a$ for some
  element $a \in \pi_1(M_{K_i}, x_i'')^{(1)}$. Moreover, the collection of
  $\delta_j''$ corresponding to $K_i$ in Figure~\ref{fig:ualex} normally
  generate $ \pi_1(M_{K_i}, x_i'')^{(1)}$.  It therefore suffices to check that
  $f_i$ agrees with $f^{K_i}_{\chi_i}$ on $\mu_{K_i}''$ (as done above) and on
  the collection of $\delta_j''$ corresponding to~$K_i$.

  Supposing that $\delta_j''$ corresponds to $K_{i}$, we have that

  \begin{align*}
    \iota(\iota_i( \delta_j''))&= \iota( \kappa_i' \cdot (x_i' \times [1,2])
    \cdot \delta_j'' \cdot\overline{(x_i' \times [1,2])} \cdot\overline{\kappa_i'})
    \\
    & = \iota(\kappa_i' \cdot \delta_j'\cdot  \overline{\kappa_i'})
    \\
    &=(x_N \times [0,1]) \cdot \kappa_i'\cdot \delta_j'
    \cdot \overline{\kappa_i'} \cdot \overline{(x_N \times [0,1])}
    \\
    &= \kappa_i \delta_j \overline{\kappa_i} \in \pi_1(U, x_N).
  \end{align*}

  Now fix a lift $\widetilde{x_N}$ of $x_N$ to $M_K^2$, the double cover of
  $M_K$.  Since $x_N$ does not lie in a tubular neighborhood of $K$, we can
  think of $\widetilde{x_N}$ as lying in $E_K^2\subseteq \Sigma_2(K)$ as well.
  The inclusion induced maps $M_K \to U$ and $M_{K_i} \to U$ induce isomorphisms
  on first homology, and so the double cover $U^2$ is a cobordism from $M_K^2$
  to $\bigsqcup_{i=1}^N M_{K_i}^2$. For each $i=1$, \dots, $N$, lifting the arc
  \[
    \kappa_i \cdot (x_i \times [0,1]) \cdot (x_i' \times [1,2])
  \]
  to $U^2$ starting at  $\widetilde{x_N}$ gives a preferred basepoint
  $\widetilde{x_i''}$ in $M_{K_i}^2$. As before, we also think of this basepoint
  as lying in $E_{K_i}^2 \subseteq \Sigma_2(K_i)$. We can therefore speak of
  \emph{the} lift $\widetilde{\gamma}$ of a curve $\gamma$ based at $x_N$
  (respectively, $x_i''$) to $\Sigma_2(K)$ (respectively,  $\Sigma_2(K_i)$) by
  choosing the lift with basepoint $\widetilde{x_N}$ (respectively,
  $\widetilde{x_i''}$).

  \begin{remark}
    A choice of basepoint is technically always necessary to define $f^K_\chi$,
    though this was suppressed in Section~\ref{section:metabelian-homology} in
    our discussion of the connected case, where it was less important.
  \end{remark}

  Therefore
  \begin{align*}
    f_i(\delta_j'') = F(  \kappa_i \delta_j \overline{\kappa_i})
    = f^{K}_{(\chi_k)_{k=1}^N}(\kappa_i \delta_j \overline{\kappa_i})
    =(0, (\chi_k)_{k=1}^N(\widetilde{\kappa_i \delta_j \overline{\kappa_i}})).
  \end{align*}
  Similarly,
  \begin{align*}
    f_{\chi_i}^{K_i}(\delta_j'')= (0, \chi_i(\widetilde{\delta_j''})).
  \end{align*}
  It therefore only remains to show that
  \[
    (\chi_k)_{k=1}^N(\widetilde{\kappa_i \delta_j \overline{\kappa_i}})=
    \chi_i(\widetilde{\delta_j''}).
  \]

  First, note that $\chi_k(\widetilde{\kappa_i \delta_j \overline{\kappa_i}})=0$
  unless $k=i$. Also, the homology class of $\widetilde{\kappa_i \delta_j
  \overline{\kappa_i}}$ in
  \[
    H_1(\Sigma_2(K_i)) \subset \bigoplus_{i=1}^N H_1(\Sigma_2(K_i))
    \cong H_1(\Sigma_2(K)
  \]
  is exactly the same as that of  $\widetilde{\delta_j''}$ in
  $H_1(\Sigma_2(K_i))$, and so we have that
  \[
    \chi_i(\widetilde{\kappa_i \delta_j \overline{\kappa_i}})
    = \chi_i(\widetilde{\delta_j''}).\qedhere
  \]
\end{proof}

We note for later use that the inclusion induced maps $M_K \to U$ and $M_{K_i}
\to U$ give isomorphisms on first homology, and that $H_2(U) \cong \Z^{N}$ and
$H_3(U) \cong \Z^N$.

\section{Proof of Theorem~\ref{thm:general}}\label{section:proof-of-main-theorem}
Since the proof of Theorem~\ref{thm:general} is rather long, for the reader's convenience we outline the main steps of the argument, with references to key results from elsewhere in the paper.

\begin{enumerate}
  \item (Proposition~\ref{prop:step1}.) Construct a 4-manifold $V$ with boundary $\partial V=M_K$ such that the inclusion
  induced map $H_1(M_K) \to H_1(V)$ is an isomorphism and $H_2(V) \cong \Z^{2g}$.
  Let $U$ denote the standard cobordism between $M_K$ and $Y:=
  \bigsqcup_{i=1}^{N}M_{K_i}$ discussed in Section~\ref{section:a-standard-cobordism}
   and let $Z:= V \cup_{M_K} U$. Note for later use that $H_2(Z)=\Z^{2g+N-1}$ and  $\chi(Z)=2g$.
  \item (Propositions~\ref{prop:standardu} and~\ref{prop:step2}.) Show that we can choose maps $\chi_i\colon H_1(M_{K_i}) \to \Z_p$ such
  that the corresponding map $\phi\colon \coprod_{i=1}^N \pi_1(M_{K_i}) \to \Z
  \ltimes \Z_p$ extends to a map $\Phi\colon \pi_1(Z) \to \Z \ltimes \Z_{p^a}$
  for some $a \geq 1$ and such that  at least $n:=\frac{m_RN-4g}{2}$ of the $\chi_i$  are
  nonzero.
  \item (Claim~\ref{prop:step-3}.) Show that for some $1 \leq i \leq N$ and $1 \leq j \leq r$,  the element $ [1,0] \otimes [\lambda(\eta_i^j)] $ in $H_1^\phi(Y) = H_1 \bigl(\Q(\xi_p)[t^{\pm1}]^2 \otimes_{\Z[\pi_1(Y)]} C_*(\widetilde{Y}) \bigr)$ does not map to $0$ in $H_1^{\Phi}(Z)$. (Recall that $\lambda(\eta_i^j)$ is a longitude of the infection curve $\eta_i^j$ and lies in $M_{K_i} \subset Y$.)  This step, which contains much of the technical work of the theorem,  crucially relies on our
  assumption that for every nontrivial $\chi\colon  H_1(\Sigma_2(R)) \to \Z_p$ we
  know that  the collection $\{[1,0] \otimes [\eta^j]\}$ generates $H_1^{\theta
  \circ f_\chi}(R)$ and that the order of $H_1^{\theta \circ f_\chi}(R)$ is
  relatively prime to $\Delta_R(t)$.
  \item (Last two paragraphs of Section~\ref{section:proof-of-main-theorem}.)  Construct a local coefficient derived series representation $\pi_1(Y)
  \to \Lambda$ extending over $\pi_1(Z)$ and bound the $L^{(2)}$
  $\rho$-invariant $\rho^{(2)}(Y,\Lambda)$ in two different ways to get a
  contradiction. Essentially, since $\chi(Z)=2g$ and our representation
  extends over $\pi_1(Z)$, Theorem~\ref{thm:upperbound}  implies that  $|\rho^{(2)}(Y,\Lambda)|$ is small,    while our assumptions on $|\rho_0(J^j_i)|$ together with Step 3,  Proposition~\ref{prop:additivity}, and Proposition~\ref{prop:7-1-MP} will imply
  that $|\rho^{(2)}(Y,\Lambda)|$ is very large.
\end{enumerate}
We now prove the two propositions crucial to Steps 1 and 2, respectively.

\begin{prop}\label{prop:step1}
Let $N \in \mathbb{N}$ be arbitrary and $K= \#_{i=1}^N K_i$ be a knot with $g_4(K) \leq g$.
Then there exists a compact connected 4-manifold $V$ such that, letting $U$ denote the standard cobordism between $M_K$ and $Y:=\sqcup_{i=1}^N M_{K_i}$ from Section~\ref{section:a-standard-cobordism}, $Z:= U \cup_{M_K} V$ satisfies:
\begin{enumerate}[(i)]
\item $\partial Z= Y$,
\item $H_2(Z)=\Z^{2g+N-1}$, and
\item   $\chi(Z)= 2g$.
\end{enumerate}
\end{prop}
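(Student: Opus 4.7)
The hypothesis $g_4(K) \leq g$ provides a locally flat properly embedded orientable surface $\Sigma \subset D^4$ of genus $g$ with $\partial \Sigma = K$. The plan is to construct $V$ by first attaching a $0$-framed $2$-handle $h^2$ to $D^4$ along $K$, producing the simply connected $4$-manifold $W_0 := D^4 \cup h^2$ with $\partial W_0 = M_K$, $\chi(W_0) = 2$, and $H_2(W_0) \cong \Z$. The surface $\Sigma$ then caps off with the core of $h^2$ to a closed orientable genus $g$ surface $\hat \Sigma \subset W_0$, whose normal bundle is trivial since the $0$-framing on $K$ extends the surface framing of $\Sigma$. Letting $H_g$ denote a $3$-dimensional handlebody with $\partial H_g = \hat\Sigma$, I will then define
\[
V := (W_0 \ssm \mathrm{int}(\nu \hat\Sigma)) \cup_{\hat\Sigma \times S^1} (H_g \times S^1),
\]
using $\nu \hat\Sigma \cong \hat\Sigma \times D^2$ and identifying the two copies of $\hat\Sigma \times S^1 = \partial(H_g \times S^1)$. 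This surgery on $\hat\Sigma$ closes off the extra boundary component of $W_0 \ssm \nu \hat \Sigma$, so $\partial V = M_K$; part (i) then follows since $\partial Z = Y \sqcup (\partial V \ssm M_K) = Y$.

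For (iii), inclusion-exclusion gives $\chi(W_0 \ssm \nu \hat\Sigma) = \chi(W_0) - \chi(\hat\Sigma) = 2 - (2 - 2g) = 2g$ while $\chi(H_g \times S^1) = \chi(\hat\Sigma \times S^1) = 0$, so $\chi(V) = 2g$. Since $\chi(U) = 0$ (immediate from the handle decomposition in Section~\ref{section:a-standard-cobordism}, as $U$ has equal numbers of $2$- and $3$-handles) and $\chi(M_K) = 0$, this gives $\chi(Z) = \chi(U) + \chi(V) - \chi(M_K) = 2g$.

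The main work is part (ii). First, Mayer--Vietoris applied to $V = (W_0 \ssm \nu\hat\Sigma) \cup (H_g \times S^1)$ gives $H_1(V) \cong \Z$ generated by the meridian of $\hat\Sigma$, which coincides with the meridian of $K$; Lefschetz duality applied to $(V, M_K)$ together with the fact that $H_1(M_K) \to H_1(V)$ is an isomorphism forces $H_3(V) = 0$, so $\chi(V) = 2g$ gives $H_2(V) \cong \Z^{2g}$. For $U$, Lefschetz duality applied to $(U, \partial U)$ gives $H_3(U) \cong H^1(U, \partial U) \cong \Z^N$, using that $\partial U$ has $N + 1$ connected components and the restriction $H^1(U) \to H^1(\partial U)$ is the injective diagonal, and combined with $\chi(U) = 0$ this yields $H_2(U) \cong \Z^N$. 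Applying Mayer--Vietoris to $Z = U \cup_{M_K} V$, since $H_1(M_K) \to H_1(U) \oplus H_1(V) \cong \Z \oplus \Z$ is the injective diagonal, the connecting homomorphism $H_2(Z) \to H_1(M_K)$ vanishes, and so $H_2(Z)$ is the cokernel of the inclusion-induced map $H_2(M_K) \cong \Z \to H_2(U) \oplus H_2(V) \cong \Z^{N+2g}$.

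The main obstacle is showing this cokernel is free of rank $N + 2g - 1$, which reduces to verifying that the generator $[\hat F]$ of $H_2(M_K)$, represented by the capped-off Seifert surface of $K$, maps to a primitive class. The inclusion $H_2(Y) = \bigoplus_i H_2(M_{K_i}) \cong \Z^N$ into $H_2(U) \cong \Z^N$ sends the basis of capped-off Seifert surfaces $[\hat F_i]$ to a basis of $H_2(U)$, and under this identification $[\hat F]$ maps to $[\hat F_1] + \cdots + [\hat F_N] = (1, 1, \ldots, 1)$, which is primitive. This establishes $H_2(Z) \cong \Z^{N + 2g - 1}$, completing (ii).
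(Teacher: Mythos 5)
Your construction of $V$ is exactly the one the paper uses (following \cite[Proposition~5.1]{Cha:2006-1}: remove a neighbourhood of the capped-off surface in the zero-trace of $K$ and glue in $H_g\times S^1$), and the subsequent homology bookkeeping leads to the same conclusions, so this is essentially the same proof. The one place you go beyond what the paper records is in (ii): the paper simply asserts $H_2(Z)\cong\Z^{2g+N-1}$, whereas you carry out the Mayer--Vietoris argument and check primitivity of the image of $[\hat F]$ in $H_2(U)\oplus H_2(V)$; this is a correct and worthwhile clarification, though you should note that your Euler-characteristic computations only give the ranks of $H_2(U)$ and $H_2(V)$ directly, with torsion-freeness following from $H^3(U)\cong H_1(U,M_K)=0$ and $H^3(V)\cong H_1(V,M_K)=0$ via the universal coefficient theorem.
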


\begin{proof}
Let $F'$ be a  locally flat surface embedded in $D^4$ with $\partial F'=K$ and $g(F')=g$.
Following \cite[Proposition~5.1]{Cha:2006-1}, we construct a topological
4-manifold $V$ with boundary $\partial V=M_K$, $H_1(M_K) \to H_1(V)$ an
isomorphism, and $H_2(V)\cong \Z^{2g}$, as follows. Let $X=X_0(K)$ denote the
0-trace of $K$, the 4-manifold obtained from $D^4$ by attaching a 0-framed
2-handle along a neighborhood of $K$. Let $F$ be the closed surface in $X$
obtained by taking the union of $F'$ with a core of the 2-handle.  Note that
since $F$ is locally flat it has a normal bundle by
\cite[Section~9.3]{Freedman-Quinn:1990-1}. Observe that $F \cdot F=0$, and so
$\nu(F) \cong F \times D^2$. Now, let $V= \left(X \ssm \nu(F)\right)
\cup_{\partial \nu(F)} H \times S^1$, where $H$ is any handlebody with $\partial
H= F$. A Mayer-Vietoris argument shows that $H_1(V) \cong \Z$, with generator a
meridian to $F$, and that $H_2(V) \cong \Z^{2g}$. Note that by Poincar{\'e}
duality and universal coefficients, we have
\[
  H_3(V) \cong H^1(V, M_K) \cong \Hom(H_1(V,M_K),\Z) \cong \Hom(0,\Z) \cong 0.
\]
So in particular the Euler characteristic of $V$ is $\chi(V)= 1-1+2g-0+0=2g$.

Let $U$ be the standard cobordism between $M_K$ and $\bigsqcup_{i=1}^N M_{K_i}$
discussed rather extensively in Section~\ref{section:a-standard-cobordism}. Now let $Z= V \cup_{M_K} U$, as
illustrated schematically in Figure~\ref{fig:diagramzz}.

\begin{figure}[H]
  \includegraphics[height=1.5in]{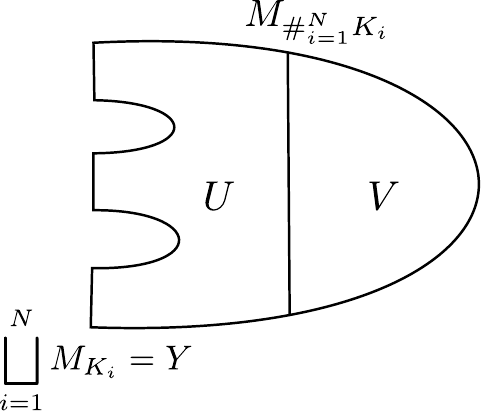}
  \caption{A schematic diagram of $Z= U \cup_{M_K} V$.}
  \label{fig:diagramzz}
\end{figure}

Note that $H_2(Z) \cong \Z^{2g} \oplus \Z^{N-1}$, and the inclusion induced map
$H_1(M_K) \to H_1(Z)$ is an isomorphism, as are each of the maps $H_1(M_{K_i})
\to H_1(Y) \to H_1(Z)$ for $i=1, \dots, N$. Also, $H_3(Z) \cong \Z^{N-1}$ and
$H_4(Z)=0$. So the Euler characteristic of $Z$ is
\[
  \chi(Z)= 1-1 + (2g + N-1)- (N-1))= 2g. \qedhere
\]
\end{proof}

\begin{prop}\label{prop:step2}
Let $R$ be a ribbon knot  and $p$ be a prime, and let $m_R$ denote the generating rank of  the $p$-primary part of $H_1(\Sigma_2(R))$.
 Fix $N \in \mathbb{N}$, and for each $i=1, \dots, N$ let $K_i$ be a knot obtained by infection along an unlink $\{\eta^j\}_{j=1}^r$ in the complement of $R$ such that  each $\eta^j$ represents an element of $\pi_1(M_R)^{(1)}$.
 Let $K= \#_{i=1}^N K_i$, and suppose that $M_K$ bounds a compact connected 4-manifold $V$ such that $H_1(M_K) \to H_1(V)$ is an isomorphism.

Then there exist $\chi_i \colon H_1(\Sigma_2(R)) \to \Z_p$, for $i=1, \dots, N$ such that:
 \begin{enumerate}[(a)]
 \item at least $\frac{m_RN-2 \chi(V)}{2}$ of the $\chi_i$ are nonzero, and
 \item  for some $a>0$, there exists a map $\pi_1(V) \to \Z \ltimes \Z_{p^a}$ such that the composition $\pi_1(M_K) \to \pi_1(V)\to \Z \ltimes \Z_{p^a}$ is given by the post-composition of $f_{\oplus_{i=1}^N \chi^{K_i}_i}$
with the inclusion $\Z \ltimes \Z_p \hookrightarrow \Z \ltimes \Z_{p^a}$.
 \end{enumerate}
  \end{prop}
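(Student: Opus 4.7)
The plan is to apply Proposition~\ref{prop:homology} to control generating ranks of the relevant twisted homology, then to select the characters $\chi_i$ by a combinatorial argument, and finally to extend to $\pi_1(V)$ via the injectivity of $\Q/\Z$.

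Since $H_1(M_K) \cong \Z$ and the inclusion $H_1(M_K) \to H_1(V)$ is an isomorphism, there is a well-defined double cover $V^2 \to V$ with boundary $M_K^2$. Because connected sum commutes with taking double branched covers, and because the hypothesis $\eta^j \in \pi_1(M_R)^{(1)}$ gives (via a Mayer--Vietoris computation) identifications $H_1(\Sigma_2(K_i)) \cong H_1(\Sigma_2(R))$, we obtain a direct-sum decomposition $TH_1(M_K^2)_p \cong \bigoplus_{i=1}^N H_1(\Sigma_2(R))_p$ of total generating rank $m = N m_R$. Proposition~\ref{prop:homology} then yields that the image $I$ of the inclusion-induced map $TH_1(M_K^2) \to TH_1(V^2)$ has $p$-primary part of generating rank at least $n := (N m_R - 2\chi(V))/2$.

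The main combinatorial step is to select a character $\bigoplus_i \chi_i \colon TH_1(M_K^2)_p \to \Z_p$ that factors through $I_p$ and for which at least $n$ of the summand restrictions $\chi_i$ are nonzero. Such characters form an $\F_p$-subspace $L \subseteq \bigoplus_i \Hom(H_1(\Sigma_2(K_i))_p, \F_p)$ of dimension at least $n$; letting $V_i \subseteq I_p/pI_p$ denote the image of the $i$-th summand, so that $\sum_i V_i = I_p/pI_p$ has dimension at least $n$, we seek an element of $L$ which, viewed as a linear functional on $I_p/pI_p$, is nonzero on each of at least $n$ of the $V_i$. We produce such an element by a transversal argument exploiting the direct-sum structure of $TH_1(M_K^2)_p$ together with the generating-rank bound on $I_p$. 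This is the main obstacle of the proof, since a naive dimension count only guarantees the weaker bound $\geq n/m_R$.

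Finally, given such $\chi \colon I_p \to \Z_p$, we extend by the injectivity of $\Q/\Z$ as a $\Z$-module to a character $\widetilde\chi \colon TH_1(V^2) \to \Q/\Z$; since $TH_1(V^2)$ is finite, the image of $\widetilde\chi$ lies in $\Z_{p^a}$ for some $a \geq 1$. Coupling $\widetilde\chi$ with the abelianization $\pi_1(V) \to H_1(V) \cong \Z$ and the semidirect-product structure on $\Z \ltimes \Z_{p^a}$, with $\Z$ acting on $\Z_{p^a}$ by $-1$ as in Section~\ref{section:metabelian-homology}, gives the desired homomorphism $\pi_1(V) \to \Z \ltimes \Z_{p^a}$. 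By construction its restriction to $\pi_1(M_K)$ coincides with $f_{\oplus \chi_i^{K_i}}$ postcomposed with the inclusion $\Z \ltimes \Z_p \hookrightarrow \Z \ltimes \Z_{p^a}$, completing the proof.
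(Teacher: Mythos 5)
Your overall strategy matches the paper's: apply Proposition~\ref{prop:homology} to obtain the generating-rank lower bound on the image $I$ of $TH_1(M_K^2) \to TH_1(V^2)$, equivalently on $(TH_1(M_K^2)/H)_p$ where $H = \ker(H_1(M_K^2) \to H_1(V^2))$; select a character vanishing on $H$ with many nonzero block restrictions; then extend over $\pi_1(V)$ using the injectivity of $\Q/\Z$ together with the metabelian construction of Section~\ref{section:metabelian-homology}. The setup and the final extension step are fine, modulo the mis-phrasing ``such characters form an $\F_p$-subspace $L$'' --- the characters with $\geq n$ nonzero block restrictions are not closed under addition; you presumably mean the subspace of those factoring through $I_p$.

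The genuine gap is the step you yourself call ``the main obstacle.'' You assert that a ``transversal argument exploiting the direct-sum structure \ldots\ together with the generating-rank bound'' produces a character $\chi = \bigoplus_i \chi_i \in L$ for which at least $n$ of the block restrictions $\chi_i \colon H_1(\Sigma_2(K_i))_p \to \Z_p$ are nonzero, and you correctly observe that a naive count only gives $\geq n/m_R$. But you never actually give this argument, so your proof is incomplete at its crux. At this exact point the paper appeals to the proof of \cite[Theorem~6.1]{KimLivingston:2005}, whose underlying linear algebra is that a subgroup of $\Z_p^M$ isomorphic to $\Z_p^\ell$ has an element with at least $\ell$ nonzero coordinates (row-reduce a generating matrix to get $\ell$ pivot columns and take the sum of the rows, which is nonzero in every pivot position). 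To make your argument complete you would need to supply the details of how this --- or your proposed transversal argument --- yields $n$ nonzero \emph{blocks} rather than $n$ nonzero $\F_p$-\emph{coordinates}, which is the very distinction you raise and then do not resolve.
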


\begin{proof}
For convenience, let $n=\frac{m_R N-2 \chi(V)}{2}$.
There is a canonical identification $H_1(M_K^2) \cong \Z \oplus H_1(\Sig_2(K))$,
and so given any $(\chi_i^{K_i})_{i=1}^N$ we obtain not just a map $\chi\colon
H_1(\Sig_2(K)) \to \Z_p$ but also  a map $\overline{\chi}\colon H_1(M_K^2) \to \Z_p$
by sending the $\Z$ coordinate to zero. Since the inclusion $H_1(M_K) \to
H_1(V)$ is an isomorphism, it therefore suffices to show that there are
homomorphisms $(\chi^R_i)_{i=1}^N\colon H_1(\Sig_2(R)) \to \Z_p$, at least $n$ of which are
nonzero, such that the map
\[
  \overline{\chi}:=\overline{(\chi^{K_i}_i)_{i=1}^N}\colon H_1(M_K^2) \to \Z_p
\]
extends over $H_1(V^2)$, perhaps after expanding its codomain to $\Z_{p^a}$ for
some $a>0$. Note that $\overline{\chi}$ extends over $H_1(V^2)$ up to enlarging its
codomain if and only if $\overline{\chi}$ vanishes on
\[
  H:=\ker(H_1(M_K^2) \xrightarrow{i_1} H_1(V^2)).
\]

The group of characters $TH_1(M_K^2) \to \Z_p$ is isomorphic to $H_1(\Sig_2(K),
\Z_p)$, which is in turn congruent to $(\Z_p^{m_R})^N$, where we recall that $m_R$
denotes the generating rank of the $p$-primary part of $H_1(\Sig_2(R))$. The
subgroup of characters vanishing on $H$ is in bijective correspondence with
$(TH_1(M_K^2)/H) \otimes \Z_p$.

Note that $M_K$ is a homology $S^1 \times S^2$ with
$\grr (TH_1(M^2) \otimes \Z_p)=m_RN$ Therefore, by Proposition~\ref{prop:homology}, the $p$-primary part of $TH_1(M_K^2)/H$ has generating rank at least~$\frac{m_RN- 2 \chi(V)}{2}=n$.  Therefore $TH_1(M_K^2)/H$ has a subgroup
isomorphic to~$\Z_p^n$. Our desired result now follows from a linear algebra argument (see the proof of \cite[Theorem~6.1]{KimLivingston:2005}): every subgroup of $\Z_p^{M}$ isomorphic to $\Z_p^{\ell}$ ($0 \leq \ell \leq M$) has an element at least $\ell$ of whose coordinates are nonzero.
\end{proof}


%

Now we  prove Theorem~\ref{thm:general}.

\begin{proof}[Proof of Theorem~\ref{thm:general}]
Suppose for the sake of contradiction that there is some locally flat surface
$F'$ embedded in $D^4$ with $\partial F'=K$ and $g(F')=g$.
Let $U, V,$ and $Z$ be as in Proposition~\ref{prop:step1}. Note that as discussed in Section~\ref{section:a-standard-cobordism} we have a standard choice of basepoints and paths
inducing inclusion maps; for the rest of the proof, these choices will remain
fixed though not explicitly discussed.

We pause to establish notation. For a knot $J$ in $S^3$, we denote its exterior
by~$E_J$.  For a manifold $X$ with $H_1(X) \cong \Z$, we denote its canonical
double cover by $X^2$. The choice of a meridian $\mu_J$ determines a splitting
$\pi_1(M_J) \cong \Z \ltimes \mathcal{A}(J)$, where $\mathcal{A}(J)$ denotes the
Alexander module of $J$. Note that $H_1(\Sig_2(J))$ is naturally identified with
$\mathcal{A}(J)/ \langle t+1 \rangle$, and so a map $\chi\colon H_1(\Sig_2(J))
\to \Z_p$ induces a map
\[
  f_\chi\colon \pi_1(M_J) \xrightarrow{\cong} \Z \ltimes \mathcal{A}(J)
  \to \Z \ltimes H_1(\Sig_2(J)) \xrightarrow{Id \ltimes \chi} \Z \ltimes \Z_p.
\]

Note that in the setting of Proposition~\ref{prop:step1}, since $H_1(M_K) \to H_1(Z) \cong \Z$ is an isomorphism,
$Z$ also has a canonical double cover $Z^2$. It is easy to check that $Z^2= V^2
\cup_{M_K^2} U^2$  and that $\partial Z^2= \bigsqcup_{i=1}^{N} M^2_{K_i}$.

For each $i=1, \dots, N$, we have a canonical, linking form--preserving
identification of $H_1(\Sig_2(K_i))$ with $H_1(\Sigma_2(R))$ coming from the
degree one maps $E_{J_i^j} \to E_{\text{unknot}}$. Given a map $\chi^R\colon
H_1(\Sig_2(R)) \to \Z_p$ we will use $\chi^{K_i}$ to denote the corresponding
map from $H_1(\Sig_2(K_i)) \to \Z_p$, and vice versa.  We will also always
identify $H_1(\Sigma_2(K))$ with $\bigoplus_{i=1}^N H_1(\Sigma_2(K_i))$ in the
canonical, linking form--preserving way.


Define $n := \frac{m_RN-4g}{2}$. (Note that with $\chi(V)=2g$ this agrees with the definition of $n$ used above.) We wish to show that there exist $\chi^R_i\colon
H_1(\Sigma_2(R)) \to \Z_p$, for $i=1, \dots, N$, such that at least $n$ of the
$\chi^R_i$ are nonzero and for some $a>0$, there exists a map $\pi_1(Z) \to \Z
\ltimes \Z_{p^a}$ such that the composition $\pi_1(M_{K_i}) \xrightarrow{\iota
\circ \iota_i} \pi_1(Z)\to \Z \ltimes \Z_{p^a}$ is given by the postcomposition of $f_{\chi^{K_i}_i}$
with the inclusion $\Z \ltimes \Z_p \hookrightarrow \Z \ltimes \Z_{p^a}$.
Henceforth, we will implicitly take the usual inclusion of
$\Z_p$ in $\Z_{p^a}$ without further comment.

We will accomplish this in a somewhat indirect fashion, by focusing on
constructing an appropriate map on $\pi_1(M_K)$ which extends over $\pi_1(U)$
and $\pi_1(V)$ separately. By Proposition~\ref{prop:standardu}, given any choice
of $\chi^R_1, \dots, \chi^R_N\colon H_1(\Sigma_2(R)) \to \Z_p$, the map
$f_{(\chi^{K_i}_i)_{i=1}^N}\colon \pi_1(M_K) \to \Z \ltimes \Z_p$ extends
uniquely to a map $F\colon \pi_1(U) \to \Z \ltimes \Z_p$ such that when we
consider the composition
\[
  F \circ\iota \circ \iota_i\colon \pi_1(M_{K_i}) \to \Z \ltimes \Z_p,
\]
we have $F \circ \iota \circ\iota_i= f_{\chi^{K_i}_i}$.
By applying Proposition~\ref{prop:step2} to our $K$ and $V$ and extending over $U$ as discussed above, we obtain $\chi= (\chi^R_1, \dots, \chi^R_{N})$ with at least $\frac{m_R N-4g}{2}=:n$ of the $\chi_i$ nonzero together with a map
$F\colon \pi_1(Z) \to \Z \ltimes \Z_{p^a}$ such that the composition
\[
  \pi_1(M_{K_i}) \xrightarrow{\iota \circ \iota_i} \pi_1(Z) \to \Z \ltimes \Z_{p^a}
\]
is given by $f_{\chi^{K_i}_i}$.\\
As described in Section~\ref{section:metabelian-homology}, we have a fixed map $\theta \colon \Z
\ltimes \pi_1(\Sigma_2(K)) \to \GL_2(\Q(\xi_{p^a})[t^{\pm1}])$. By
post-composing $F$ and each $f_{\chi^{K_i}_i}$ with this map, we obtain
\begin{align*}
  \Phi= \theta \circ F\colon& \pi_1(Z) \to  \GL_2(\Q(\xi_{p^a})[t^{\pm1}]),
  \\
  \phi_i= \theta \circ f_{\chi_i^{K_i}}\colon& \pi_1(M_{K_i})
  \to  \GL_2(\Q(\xi_{p^a})[t^{\pm1}]).
\end{align*}
We let
\[
  \phi= \coprod_{i=1}^N \phi_i\colon \coprod_{i=1}^N \pi_1(M_{K_i}) \to
  \GL_2(\Q(\xi_{p^a})[t^{\pm1}]).
\]

For convenience, let $\F= \Q(\xi_{p^a})$, $S= \F[t^{\pm1}]$,
$Q= \F(t)$, and  $S/ p$ be shorthand for $S/ \langle p(t) \rangle$ for any
polynomial $p(t) \in S$. Since our infection curves $\alpha_i$ live in the
second derived subgroup of $M_{R_0}$, the degree one maps $f_i\colon E_{J_i^j}
\to E_{\text{unknot}}$ give us an identification
\[
  f_*\colon H_1^{\phi}(Y, S) =
  \bigoplus_{i=1}^{N}  H_1^{\phi_i}(M_{K_i}, S)
  \xrightarrow{\cong} \bigoplus_{i=1}^{N}  H_1^{\theta \circ f_{\chi_i^R},S}(M_{R})
\]
where the maps $\chi^R_i\colon H_1(\Sigma_2(R)) \to \Z_p \hookrightarrow
\Z_{p^a}$ are as above.
We now work towards proving the following claim.

\begin{claim}\label{claim:not-contained}
 \[ H:= \bigoplus_{\{i \mid \chi_i^{R} \neq 0\}} H_1^{\phi_i}(M_{K_i})
\text{ is not contained in }\ker( H_1^{\phi}(Y; \F[t^{\pm1}]) \to H_1^{\Phi}(Z; \F[t^{\pm1}])).\]
\end{claim}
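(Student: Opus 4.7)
I would prove Claim~\ref{claim:not-contained} by contradiction: suppose $H \subseteq \ker(i_1)$ where $i_1 \colon H_1^\phi(Y) \to H_1^\Phi(Z)$ is the inclusion-induced map, and derive a contradiction by comparing the generating ranks of $\ker(i_1)$ and $H$ as modules over the PID $S = \F[t^{\pm 1}]$ (with $\F = \Q(\xi_{p^a})$), after localizing at a carefully chosen prime $\pi(t)\in S$.

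\emph{Upper bound on $\ker(i_1)$.} From the twisted long exact sequence of $(Z,Y)$ (Proposition~\ref{prop:les-pairs}), $\ker(i_1) \cong \coker(j_2)$ where $j_2 \colon H_2^\Phi(Z) \to H_2^\Phi(Z,Y)$. Since each $H_1^{\phi_i}(M_{K_i})$ is a torsion $S$-module (being the metabelian twisted homology of a knot zero-surgery), so is $\ker(i_1)$; hence $\coker(j_2)$ is a torsion quotient of $H_2^\Phi(Z,Y)$. Applying Poincar{\'e}-Lefschetz duality $H_2^\Phi(Z,Y) \cong H^2_\Phi(Z)$ together with the universal coefficient theorem identifies the torsion part of $H_2^\Phi(Z,Y)$ with $TH_1^\Phi(Z)$ (non-canonically), so $\ker(i_1)$ is a quotient of $TH_1^\Phi(Z)$. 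To bound the $\pi(t)$-primary part of the latter, I would apply the rational twisted Euler characteristic over $Q = \F(t)$: since $A \otimes_S Q \cong Q^2$ and $\chi(Z) = 2g$, we get $\sum (-1)^i \dim_Q H_i^\Phi(Z;Q) = 4g$, with $H_0^\Phi(Z;Q) = 0$ (because $\mu_K$ has infinite image in $\Z \ltimes \Z_{p^a}$, so $\theta(\mu_K) - I$ is invertible over $Q$). Combining this with a twisted analogue of Lemma~\ref{lemma:moregeneralhomology} applied to $(Z,Y)$, and using the coprimality hypothesis~(2) to control the free $S$-rank of $H_2^\Phi(Z)$, I would deduce $\grr_S(\coker(j_2)_{\pi(t)}) \leq 2g$ for any prime $\pi(t)$ coprime to $\Delta_R(t)$.

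\emph{Lower bound on $H$.} By Proposition~\ref{prop:step2}, at least $n \geq 2g\,d_R + 1$ of the characters $\chi_i^R$ are nonzero. By hypothesis~(1) of Theorem~\ref{thm:general}, each corresponding $H_1^{\phi_i}(M_{K_i}) \cong H_1^{\theta \circ f_{\chi_i^R}}(M_R)$ is a nontrivial torsion $S$-module, and by hypothesis~(2) its order is coprime to $\Delta_R(t)$. Since these orders take at most $d_R$ distinct values (by definition of $d_R$), the pigeonhole principle produces a common order $\pi(t)$, still coprime to $\Delta_R(t)$, shared by at least $\lceil n/d_R \rceil \geq 2g+1$ of these summands. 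Hence the $\pi(t)$-primary part of $H$ has generating rank at least $2g+1$, contradicting the upper bound $2g$ from the previous step.

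The principal obstacle will be executing the upper bound $\grr_S(\coker(j_2)_{\pi(t)}) \leq 2g$ sharply. While $\chi(Z) = 2g$ controls the $Q$-dimensions of $H_*^\Phi(Z;Q)$ via the rational Euler characteristic, converting this into a bound on the $\pi(t)$-primary torsion part requires care. The coprimality of $\pi(t)$ with $\Delta_R(t)$ is used to eliminate contributions coming from the Alexander module of $R$ itself (which are $\Delta_R$-primary), isolating the $2g$ generators arising from the genus-$g$ closed surface used to construct~$V$. This likely requires decomposing $Z = V \cup_{M_K} U$ via a Mayer-Vietoris argument, invoking Proposition~\ref{prop:standardu} to see that $U$ contributes nothing $\pi(t)$-primary (its handles are attached along curves in $\pi_1(M_K)^{(2)}$), and applying a twisted analogue of Proposition~\ref{prop:homology} to the $V$-side to reduce the $\pi(t)$-primary generating-rank bound to $2\chi(V) = 4g$, which the sharper structure of the ribbon complement of $R$ improves to $2g$.
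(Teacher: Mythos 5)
Your proposal takes a genuinely different route from the paper, and its crucial step --- the ``upper bound on $\ker(i_1)$'' --- contains a gap beyond the obstacle you flag.  The specific problem: you claim that $\ker(i_1) \cong \coker(j_2)$ ``is a quotient of $TH_1^\Phi(Z)$'' after identifying $TH_2^\Phi(Z,Y)$ with $TH_1^\Phi(Z)^\wedge$ via Poincar\'e--Lefschetz duality and universal coefficients.  But $\coker(j_2) = H_2^\Phi(Z,Y)/\Imm(j_2)$, and the free summand of $H_2^\Phi(Z,Y) \cong S^{r} \oplus TH_2^\Phi(Z,Y)$ can contribute nontrivial torsion to this quotient even though $\coker(j_2)$ is torsion --- for example $S/\langle t-1 \rangle$ is a torsion quotient of the rank-one free module $S$, whose torsion submodule is zero.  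So the $\pi(t)$-primary generating rank of $\coker(j_2)$ is \emph{not} controlled by $TH_1^\Phi(Z)$ alone, and your proposed Mayer--Vietoris repair is left entirely unsubstantiated; there is no reason to expect a clean bound of the form $\grr_S\bigl(\coker(j_2)_{\pi(t)}\bigr) \leq 2g$.

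The paper avoids this trap by not trying to bound $\ker(i_1)$ at all.  It converts the hypothesis $H \subseteq \ker(i_1)$ into two incompatible statements about $\Imm(i_1)$:  (i)~$\Imm(i_1) \cong H_1^\phi(Y)/\ker(i_1)$ is then a quotient of $\bigl(\mathcal{A}_{\Q}(R) \otimes_{\Q[t^{\pm1}]} \F[t^{\pm1}]\bigr)^{N-k}$ and so has order a power of $\Delta_R$; while (ii)~applying Lemma~\ref{lem:intersecttorsion} (which is exactly the device for handling the free summand of $H_2^\Phi(Z,Y)$, subtracting its rank from the generating rank) and then the twisted duality of Lemma~\ref{lemma:moregeneralhomology} ($\coker(j_2|_T) \cong \ker(j_1|_T) = \Imm(i_1)$), $\Imm(i_1)$ must contain a nonzero submodule whose order is coprime to $\Delta_R$.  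The contradiction lives on the image side of the exact sequence, not the kernel side, and it is the duality lemma --- not a direct size estimate on $\coker(j_2)$ --- that closes the argument.  Your ``lower bound'' paragraph is in the right spirit, but your upper bound does not hold in the stated form and the contradiction is placed in the wrong module.  (Incidentally, you are right that the twisted Euler characteristic with rank-two coefficients should give $2\chi(Z)$, not $\chi(Z)$; the paper's displayed value $2g$ at that step deserves a second look.)
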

\begin{proof}[Proof of Claim~\ref{claim:not-contained}]
First, note that $H$ has generating rank at least $\lceil k/d_R \rceil $, since for some nontrivial $\chi_0 \colon H_1(\Sigma_2(R)) \to \Z_p$ there is a submodule of $H$ isomorphic to $\left( H_1^{\theta \circ f_{\chi_0}}(M_R) \right)^{\lceil k/{d_R}\rceil}$.
 Note that if $\chi_i^R=0$ then
\[
  H_1^{\theta \circ f_{\chi_i^R}}(M_{R} )
  \cong\mathcal{A}_{\Q} (R) \otimes_{\Q[t^{\pm1}]} \F[t^{\pm1}].
\]
We therefore have that
\[
  H_1^{\phi}(Y, S) \cong (\mathcal{A}_{\Q} (R) \otimes_{\Q[t^{\pm1}]} \F[t^{\pm1}])^{N-k}
  \oplus H
\]
where $k \geq \frac{m_R N-4g}{2}$ is the number of nonzero $\chi_i^R$.
%

We now compute the rank of $H_2^{\Phi}(Z; Q)$. We can immediately see that
$H_0^{\Phi}(Z; Q)) \cong H_0^{\phi}(Y; Q) \cong 0$, since $H_0^{\Phi}(Z)$ and
$H_0^{\phi}(Y)$ are annihilated by $t-1$.  Note that for each $i=1, \dots, N$
the inclusion map $Y_i \to Z$ induces an isomorphism on $H_0(-; \Z)$ and
$H_1(-;\Z)$. By the proof of \cite[Proposition~4.1]{Friedl-Powell:2010-1}, modified to use only a partial chain contraction for $C_*^{\Phi}(Z,Y_i;Q)$ in degrees $0,1$, as in \cite[Proposition~2.10]{Cochran-Orr-Teichner:1999-1},  this implies that the map
$H_1^{\phi_i}(Y_i; Q) \to H_1^{\Phi}(Z; Q)$ is onto. We have already observed
that $H_1^{\phi_i}(Y_i)$ is a torsion $S$-module and so $H_1^{\phi_i}(Y_i;
Q)=0$; it follows that $H_1^{\Phi}(Z; Q)=0$ as well. Consideration of the long exact sequence of the
pair $(Z,Y)$ then allows us to conclude that $H_1^{\Phi}(Z, Y;  Q)=0$.
By Poincar{\'e}-Lefschetz
duality, universal coefficients, and the long exact sequence of $(Z, Y)$ with
$Q$-coefficients we have that
\[
  H_3^{\Phi}(Z;Q) \cong H^1_{\Phi}(Z,Y; Q) \cong
  \Hom(H_1^{\Phi}(Z, Y;  Q), Q) \cong 0.
\]
Finally, since $Z$ is a topological 4-manifold and hence homotopy equivalent to a finite
CW complex with cells of dimension at most 3 (see the proof of
Theorem~\ref{thm:upperbound} for references for this fact), we have that
$H_j^{\Phi}(Z; \F(t))=0$ for all $j \geq 4$. Re-computing $\chi(Z)$ with
$Q$-coefficients, we obtain
\[
  2g= \chi(Z)= 0 - 0 + \dim_{Q}  H_2^{\Phi}(Z; Q) -0+0
  =\dim_{Q}  H_2^{\Phi}(Z; Q).
\]

We now return to working with $S = \mathbb{F}[t^{\pm 1}]$-coefficients and
consider the long exact sequence of Proposition~\ref{prop:les-pairs}
\[
  \dots \to H_2^{\phi}(Y) \xrightarrow{i_2} H_2^\Phi(Z)
  \xrightarrow{j_2} H_2^\Phi(Z,Y) \xrightarrow{\partial} H_1^{\phi}(Y)
  \xrightarrow{i_1} H_1^\Phi(Z) \xrightarrow{j_1} H_1^\Phi(Z,Y) \to \dots.
\]
Suppose now for a contradiction that
$H \leq \ker(i_1)$. Since
\begin{align*}
 \ker(i_1) =
  \Imm(\partial) &\cong H_2^\Phi(Z,Y)/ \ker(\partial) =
  H_2^\Phi(Z,Y)/ \Imm(j_2) \cong \left( S^{2g} \oplus TH_2^\Phi(Z,Y) \right)/ \Imm(j_2),
\end{align*}
it follows that $\left( S^{2g} \oplus TH_2^\Phi(Z,Y) \right)/ \Imm(j_2)$ has a submodule $H'$ isomorphic to $H$.

By applying Lemma~\ref{lem:intersecttorsion} with $A= H_2^{\Phi}(Z,Y)$, $B= \Imm(j_2)$, and $C= H'$ we obtain that  $TH_2^{\Phi}(Z,Y) / (\Imm(j_2) \cap TH_2^{\Phi}(Z,Y) )$ contains a submodule $H''$ of generating rank at least $\lceil k/{d_R}\rceil-2g$ and of order which divides the order of $H'$ and so is relatively prime to $\Delta_R$.
Since  $\Imm(j_2|_T) \subseteq \Imm(j_2) \cap TH_2^{\Phi}(Z,Y)$, it follows immediately that $\coker(j_2|_T)=TH_2^\Phi(Z,Y)/ \Imm(j_2|_T)$ contains a submodule of generating rank at least $\lceil k/{d_R}\rceil-2g$ and of order relatively prime to $\Delta_R$.

As argued above, we have that $H_1^{\Phi}(Z,Y;Q) =0$, i.e. that  $H_1^{\Phi}(Z,Y)$ is torsion, and so we can apply Lemma~\ref{lemma:moregeneralhomology} to conclude that
\begin{align*}
\coker(j_2|_T)) \cong \ker(j_1|_T)= \ker(j_1)= \Imm(i_1).
\end{align*}
Therefore   $\Imm(i_1)$ has a submodule of generating rank at least $\lceil k/{d_R}\rceil-2g$ and  of order that is relatively prime to $\Delta_R$.
Since $k \geq n= \frac{m_R N-4g}{2}$ and $N \geq \frac{4g(d_R+1)+2}{m_R}$, we obtain that $\lceil k/d_R\rceil -2g >0$
and so there is a submodule of
$\Imm(i_1)$ isomorphic to $S/s$ for some nontrivial polynomial $s$ relatively
prime to $\Delta_R$. This is our desired contradiction, since
$H  \leq \ker(i_1)$
also implies that $\Imm(i_1)$ is a quotient of $\left(\mathcal{A}_{\Q} (R) \otimes_{\Q[t^{\pm1}]} \F[t^{\pm1}]\right)^{N-k}$, which has order $\Delta_R^{N-k}$ and therefore cannot contain a submodule isomorphic
to~$S/s$.
This completes the proof of the claim. \end{proof}

\begin{claim}\label{prop:step-3}
  For some $1 \leq i \leq N$ and $1 \leq j \leq r$,  the element $ [1,0] \otimes [\lambda(\eta_i^j)] $ does not map to $0$ in $H_1^{\Phi}(Z)$.
\end{claim}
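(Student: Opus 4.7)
The plan is to combine Claim~\ref{claim:not-contained} with hypothesis~(1) of Theorem~\ref{thm:general} by pushing the problem through the satellite degree-one map. By Claim~\ref{claim:not-contained}, the submodule $H = \bigoplus_{\{i : \chi_i^R \neq 0\}} H_1^{\phi_i}(M_{K_i})$ of $H_1^\phi(Y)$ is not contained in the kernel of $H_1^\phi(Y) \to H_1^\Phi(Z)$. I would begin by choosing an index $i$ with $\chi_i^R \neq 0$ and an element $x \in H_1^{\phi_i}(M_{K_i})$ with nonzero image in $H_1^\Phi(Z)$. Once $i$ is fixed, the task reduces to showing that the classes $\{[1,0] \otimes [\lambda(\eta_i^j)]\}_{j=1}^r$ generate $H_1^{\phi_i}(M_{K_i})$ as a module over $\F[t^{\pm1}]$, since then some $\F[t^{\pm1}]$-linear combination of them equals $x$, forcing at least one summand to have nonzero image in $H_1^\Phi(Z)$.

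The central step is this generation statement. The satellite construction yields a degree-one map $f \colon M_{K_i} \to M_R$ that collapses each infection exterior $E_{J_i^j}$ to a tubular neighborhood of $\eta^j$. Since the induced isomorphism $H_1(\Sigma_2(K_i)) \cong H_1(\Sigma_2(R))$ identifies $\chi_i^{K_i}$ with $\chi_i^R$, the representation $\phi_i = \theta \circ f_{\chi_i^{K_i}}$ factors through $f_*$, and a satellite computation in the spirit of \cite[Proposition~4.1]{Friedl-Powell:2010-1} produces an isomorphism
\[
  f_* \colon H_1^{\phi_i}(M_{K_i}) \xrightarrow{\cong} H_1^{\theta \circ f_{\chi_i^R}}(M_R).
\]
Under $f_*$, the class $[1,0] \otimes [\lambda(\eta_i^j)]$ maps to $[1,0] \otimes [\lambda(\eta^j)]$, which coincides with $[1,0] \otimes [\eta^j]$ because $\lambda(\eta^j)$ and $\eta^j$ are freely homotopic inside $\nu(\eta^j) \subset M_R$. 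Hypothesis~(1) of Theorem~\ref{thm:general} then tells us that $\{[1,0] \otimes [\eta^j]\}_{j}$ generates the target, and transferring across $f_*$ gives the desired generation of the source. One minor technicality: hypothesis~(1) is phrased over $\Q(\xi_p)$, but the proof uses $\F = \Q(\xi_{p^a})$; however, generation is preserved under the base change $\Q(\xi_p)[t^{\pm1}] \hookrightarrow \F[t^{\pm1}]$, so this is harmless.

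To close the argument I would write $x = \sum_{j=1}^r r_j \cdot [1,0] \otimes [\lambda(\eta_i^j)]$ with $r_j \in \F[t^{\pm1}]$; the image of $x$ in $H_1^\Phi(Z)$ is the corresponding linear combination of the images of $[1,0] \otimes [\lambda(\eta_i^j)]$. Since this image is nonzero, at least one $[1,0] \otimes [\lambda(\eta_i^j)]$ must have nonzero image in $H_1^\Phi(Z)$, proving the claim. The main obstacle will be establishing the isomorphism $f_*$ on twisted homology with sufficient care, particularly tracking basepoints and paths under the disconnected-groupoid conventions of Section~\ref{section:disconnected} so that the images of $[\lambda(\eta_i^j)]$ are genuinely identified with $[\eta^j]$ rather than differing by some spurious conjugation. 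The remaining steps are formal consequences of the isomorphism and hypothesis~(1).
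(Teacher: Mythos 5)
Your proposal is correct and takes essentially the same approach as the paper: combine Claim~\ref{claim:not-contained} with hypothesis~(1), using the degree-one satellite map identification $f_*$ (which the paper has already set up just before Claim~\ref{claim:not-contained}) to transport the generating set $\{[1,0]\otimes[\eta^j]\}$ of $H_1^{\theta\circ f_{\chi_i^R}}(M_R)$ to a generating set of $H_1^{\phi_i}(M_{K_i})$, and then observe that if all generators died in $H_1^\Phi(Z)$ then so would all of $H$. The paper's proof is terser and leans implicitly on the identification $f_*$, while you spell out the degree-one map step and the base-change from $\Q(\xi_p)$ to $\Q(\xi_{p^a})$ more explicitly, but the underlying argument is the same.
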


\begin{proof}[Proof of Claim~\ref{prop:step-3}]
Observe that since the longitude $\lambda(\eta_i^j)$ of $\eta_i^j$ is in the
second derived subgroup of $\pi_1(M_{R})$  it must lift to a curve $l_i^j$ in
the cover $\widetilde{M_{R}}$ of $M_{R}$ determined by $\phi_i$. (In fact, it
lifts to  $\Z \ltimes \Z_{p^a}$ copies -- pick one.)
Since whenever $\chi_i^R \neq 0$ we have that the collection $\{[1,0] \otimes [l_i^j]\}_{j=1}^r$ generates $H_1^{\phi_i}(M_R)$, our argument that $H \not \leq \ker(i_1)$ in fact implies that
for at least one $i$ and $j$ with $1 \leq i \leq N$ and $1\leq j \leq r$, we have $i_1([1,0] \otimes
[l_i^j]) \neq 0$ in $H_1^{\Phi}(Z).$  This completes the proof of Claim~\ref{prop:step-3} and of Step~3.
\end{proof}

We are now ready to complete the proof of Theorem~\ref{thm:general}, as described in Step 4, by constructing a new representation of $\pi_1(Y)$  and bounding $\rho^{(2)}(Y, \psi)$ in two different ways to derive a contradiction.
Let
\[\psi \colon\pi_1(Y) \to \pi_1(Z) \to \Lambda:= \pi_1(Z)/ \pi_1(Z)^{(3)}_{(\Q, \Z_{p^a}, \Q)}\]
 be the map induced by inclusion. Since
$\Lambda$ is amenable and in $D(\Z_p)$~\cite[Lemma~4.3]{Cha:2014-1} and
$\psi$ evidently extends over $\pi_1(Z)$,  Theorem~\ref{thm:upperbound} and the
fact from Step 1 that $H_2(Z) \cong \Z^{2g+N-1}$ tells us that
\begin{align}\label{equation:upper-bound}
  |\rho^{(2)}(Y, \psi)| \leq 2 \dim_{\Z_p}H_2(Z, \Z_p) = 2(2g+ N-1).
\end{align}

Let $(i_0, j_0)$ be the maximal tuple (with respect to the lexicographic ordering) such that $[1,0] \otimes [l_i^j]$ does not map to $0$ in $H_1^\Phi(Z)$.
 Proposition~\ref{prop:7-1-MP} implies that $\lambda(\eta_{i_0}^{j_0}) \not \in
\pi_1(Z)^{(3)}_{(\Q, \Z_{p^a}, \Q)}$.
Moreover, Proposition~\ref{prop:ouradditivity} tells us that, letting $\delta_i^j(\psi)=\begin{cases} 1, & \psi(\lambda(\eta_i^j)) \neq 0 \\ 0, &\psi(\lambda(\eta_i^j))=0 \end{cases}$, we have
\begin{align} 
  \rho^{(2)}(Y, \psi)&=\sum_{i=1}^{N}\rho^{(2)}(M_{R_{\alpha}(J_i)}, \psi|_{M_{R_{\alpha}(J_i)}}) = \sum_{i=1}^{N}\Big(\rho^{(2)}(M_{R}, \psi^0_{i})+\sum_{j=1}^r \delta_i^j(\psi)
  \rho_0(J_i^j) \Big).
  \label{eqn:siglargept2}
\end{align}
 Since $|\rho^{(2)}(M_{R}, \psi^0_{i})| \leq C_R$ for all $i$, the tuple $(i_0, j_0)$ is maximal such that $ \delta_i^j(\psi) \neq 0$, and  $J_{i_0}^{j_0}$ satisfies
\[ |\rho_0(J^{j_0}_{i_0})|> 2(2g+N-1) + N C_R
  + \sum_{k=1}^{i_0-1} \sum_{\ell=1}^r |\rho_0(J^\ell_k)|
  + \sum_{\ell=1}^{j_0-1} |\rho_0(J^\ell_i)|,
\]
Equation~\ref{eqn:siglargept2} gives the desired contradiction with Equation~\ref{equation:upper-bound}, which completes the proof of Theorem~\ref{thm:general}.
\end{proof}

\section{Height four gropes}\label{section:height-four-gropes}

In Proposition~\ref{prop:gropebounding} below, we will show the following: the knot $K$ in
Section~\ref{subsection:example2bipolar} bounds a framed grope of height 4
embedded in~$D^4$.  For the reader's convenience, we begin by recalling the
definition of a (capped) grope, a certain type of 2-complex.

\begin{defn}
  [Grope of height $h$~{\cite{Freedman-Quinn:1990-1,
  Cochran-Orr-Teichner:1999-1}}] A \emph{capped surface}, or a \emph{capped
  grope of height 1,} is an oriented surface of genus $g>0$ with nonempty
  connected boundary, together with discs attached along the $2g$ curves of a
  standard symplectic basis for the surface. The discs are called \emph{caps}.
  If $G$ is a capped grope of height $h-1$, then a 2-complex obtained by
  replacing each cap of $G$ with a capped surface is called a \emph{capped grope
  of height $h$}.  A \emph{grope of height $h$} is obtained by removing caps
  from a capped grope of height $h$. It is also called the \emph{body} of the
  capped grope. The initial surface that the inductive construction starts with
  is called the \emph{base surface}, and the \emph{boundary of a grope},
  $\partial G$, is the boundary of its base surface.
\end{defn}

A (capped) grope defined above is often called \emph{disc-like}.  An
\emph{annulus-like} (capped) grope is defined in the same way, starting from a
base surface with two boundary components.

\begin{remark}
  It is not a priori obvious that a 2-complex $G$ known to be a grope has a
  well-defined height, but it is true. For the reader's convenience, we give a
  quick argument. Let $\tau \subset G$ be the singular set of the grope union
  its boundary, i.e.\ the 1-complex consisting of the points where~$G$ is not
  locally homeomorphic to an open disc. Then $G \ssm \tau$ consists of a
  collection of open surfaces, many of which are planar. Removing the subset of
  $G$ corresponding to the non-planar surfaces (the interior of the `top stage'
  of $G$) gives a new grope with a strictly smaller singular set; we can then
  repeat the above procedure. In this perspective, the height of a grope is
  exactly  the number of such steps needed to reduce the grope to a circle. We
  leave to the reader the analogous argument that the height of a capped grope
  is well-defined, as well as the intrinsic definition of the $i$th stage of a
  grope, $1 \leq i \leq h$.
\end{remark}

A (capped) grope admits a standard embedding in the upper half 3-space
$\mathbb{R}^3_+=\{z\ge 0\}$ which takes the boundary to $\mathbb{R}^2$. Compose
it with $\mathbb{R}^3_+\hookrightarrow \mathbb{R}^4_+$, take a regular
neighborhood in $\mathbb{R}^4_+$, and possibly perform finitely many plumbings.
An embedding of the result in a 4-manifold is called an \emph{immersed framed
(capped) grope}.  If no plumbing is performed, then we say that it is
\emph{embedded}.  Often we will regard an immersed/embedded (capped) grope as a
2-complex, but it is always assumed to be framed in this sense.  In addition, we
assume that each intersection in an immersed capped grope is always between a
cap and a surface in the body, following the convention
of~\cite{Cha-Kim:2016-1}.  Note that in a simply connected 4-manifold, an
embedded grope without caps can be promoted to an immersed capped grope.

Returning to our case, recall that the knot $K$ in
Section~\ref{subsection:example2bipolar} is the connected sum of satellite knots.
We will use the following terminology and results
from~\cite{Cha:2012-1,Cha-Kim:2016-1}, which also consider link versions.

\begin{defn}[Satellite capped
  grope~{\cite[Definition~4.2]{Cha:2012-1}},
  {\cite[Definition~4.2]{Cha-Kim:2016-1}}]
  \label{definition:satellite-capped-grope}
  Suppose $K$ is a knot in $S^3$ and $\alpha$ is an unknotted circle in $S^3$
  disjoint from~$K$.  Let $E_\alpha$ be the exterior of $\alpha$, and let
  $\lambda_\alpha$ be a zero linking longitude on~$\partial E_\alpha$.  A
  \emph{satellite capped grope} for $(K,\alpha)$ is a disc-like capped grope $G$
  immersed in $E_\alpha\times I$ such that the boundary of $G$ is
  $\lambda_\alpha\times 0$, the body of $G$ is disjoint from $K\times I$, and
  the caps are transverse to $K\times I$.
\end{defn}

\begin{defn}[Capped grope concordance~{\cite[Definition~4.3]{Cha-Kim:2016-1}}]
  \label{definintion:grope-concordance}
  A \emph{capped grope concordance} between two knots $J$ and $J'$ is an
  annulus-like capped grope immersed in $S^3\times I$ such that the base surface
  is bounded by $J\times 0 \cup -J'\times 1$.
\end{defn}

\begin{prop}[{\cite[Section~4.1]{Cha-Kim:2016-1}}]
  \label{proposition:product-satellite-grope}
  Suppose that there is a satellite capped grope of height $h$ for $(K,\alpha)$
  and a capped grope concordance of height $\ell$ between two knots $J$
  and~$J'$.  Then there is a capped grope concordance of height $h+\ell$ between
  the satellite knots $K_{\alpha}(J)$ and~$K_{\alpha}(J')$.
\end{prop}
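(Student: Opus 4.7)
I would construct the desired height-$(h+\ell)$ capped grope concordance in $S^3 \times I$ by an ambient satellite construction combining the local satellite data $G$ with the global concordance data $C$, following the framework of~\cite[Section~4.1]{Cha-Kim:2016-1}. The first step is to set up the ambient $4$-manifold: place $E_\alpha \times I$, which contains $K \times I$ and $G$, as a codimension-$0$ submanifold of the ambient $S^3 \times I$ for the new concordance, where $S^3$ denotes the satellite $S^3$ containing $K_\alpha(J)$ and $K_\alpha(J')$. The complement of $E_\alpha \times I$ in $S^3 \times I$ is a cobordism $X$ whose boundary contains $E_J$ at $t=0$, $E_{J'}$ at $t=1$, and a $T^2 \times I$ piece identified with $\partial E_\alpha \times I$ via the satellite identification $\lambda_\alpha \leftrightarrow \mu_J$ and $\mu_\alpha \leftrightarrow \lambda_J$; one arranges $X$ so that the capped grope concordance $C$ embeds inside it with its base surface $\Sigma_C$ correctly positioned relative to the $T^2 \times I$ gluing region.

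Next I would assemble the new capped grope. Its base surface is obtained by boundary-connect-summing $K \times I$ (inside $E_\alpha \times I$) with $\Sigma_C$ (inside $X$) along a meridional band passing through the $T^2 \times I$ gluing region, producing an annulus-like surface with boundary $K_\alpha(J) \sqcup (-K_\alpha(J'))$ and of genus equal to $g(\Sigma_C)$. The $\ell - 1$ stages of $C$ lying above $\Sigma_C$ directly contribute capped surfaces along the symplectic basis curves of $\Sigma_C$, giving the next $\ell - 1$ stages of the new grope. The satellite capped grope $G$ contributes the remaining $h$ stages: the boundary $\lambda_\alpha \times 0 = \partial G$ becomes, under the satellite identification, a meridian of $J$ which is a symplectic basis curve of the new base surface, and the body and caps of $G$ fill in above there.

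The main obstacle is to verify that the assembled $2$-complex is a valid capped grope of exactly height $h + \ell$. In particular, one must check that (i) the body of $G$ (which is disjoint from $K \times I$) remains disjoint from the new base surface after its boundary-connect-sum with $\Sigma_C$; (ii) the transverse intersections of the caps of $G$ with $K \times I$ appear in the assembled structure only as allowed cap-versus-body intersections, consistent with the convention that intersections occur only between caps and surfaces in the body; and (iii) framings and plumbings combine compatibly under the satellite identification. The first of these is the most delicate, since the boundary-connect-sum may require small isotopies that could introduce spurious intersections between the body of $G$ and the new base surface unless the connecting tube is chosen within a sufficiently controlled neighborhood of $\partial E_\alpha \times I$.
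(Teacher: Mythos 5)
The paper does not supply its own proof of this proposition: it is quoted verbatim from \cite[Section~4.1]{Cha-Kim:2016-1}, and the only expository comment in the paper is the one-line remark that the grope concordance is obtained via the ``product'' construction of \cite[Definition~4.4]{Cha-Kim:2016-1}. So there is nothing to compare your argument against line-by-line; I can only assess it on its own terms, and as written it has genuine gaps.

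The first problem is with the ambient decomposition. You set $X := (S^3 \times I) \smallsetminus (E_\alpha \times I)$ and assert both that $\partial X$ meets the ends in $E_J$ and $E_{J'}$ \emph{and} that the capped grope concordance $C$ (including its base surface $\Sigma_C$) embeds in $X$. These two statements are incompatible: $\partial \Sigma_C = J \times 0 \,\cup\, -J' \times 1$, but $J$ lies in $\nu(J)$, which is precisely the part removed when passing from $S^3$ to $E_J$. So $\Sigma_C$ cannot sit inside $X$. If instead you enlarge $X$ so that it does contain $J$ and $J'$, then the gluing region with $E_\alpha \times I$ is no longer $T^2 \times I$ once $\Sigma_C$ has positive genus, and the whole decomposition breaks down. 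This is not a small technicality; it reflects that the product construction is not a side-by-side gluing of $G$ in $E_\alpha \times I$ and $C$ in a companion piece, which is what your setup tries to do.

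Second, your proposed base surface is wrong. The boundary connected sum of $K\times I$ (with boundary $K_\alpha(J) \sqcup -K_\alpha(J')$) and $\Sigma_C$ (with boundary $J \sqcup -J'$) does not have boundary $K_\alpha(J) \sqcup -K_\alpha(J')$. A single boundary connected sum fuses one boundary circle from each surface, producing a surface with three boundary components; doing it twice fuses circles into connected sums $K_\alpha(J)\# J$, etc. Either way the boundary is not the pair of satellite knots you need. Relatedly, you claim $\lambda_\alpha$ (which is $\mu_J$ after the satellite identification) becomes a symplectic basis curve of the new base surface; but $\mu_J$ is a meridian on the boundary torus $\partial E_J$, not an interior curve on $\Sigma_C$ or on $K \times I$, so it cannot serve as a symplectic curve of a base surface formed as you describe. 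This means the $h$ stages from $G$ do not attach where you say they do.

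The third issue is that the heart of the Cha--Kim construction---how the transverse intersections of the caps of $G$ with $K\times I$ are reinterpreted as allowed cap-versus-body intersections of the assembled grope, and how the satellite identification threads the stages of $G$ through the stages of $C$---is exactly the part you flag as ``the main obstacle'' without resolving it. In the actual product construction that difficulty is dealt with by a specific procedure (taking suitable parallel copies and routing through the satellite gluing), not by a small isotopy as your discussion of point (i) suggests. In short, the ambient setup, the base surface, and the placement of the stages of $G$ all need to be reworked before this becomes a proof; as written the construction produces a $2$-complex with the wrong boundary and no mechanism for absorbing the cap--body intersections, so the argument does not go through.
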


The height $h+\ell$ capped grope concordance in
Proposition~\ref{proposition:product-satellite-grope} is obtained by a
``product'' construction described in \cite[Definition~4.4]{Cha-Kim:2016-1}.
The last ingredient we need is the following result
from~\cite{Cochran-Orr-Teichner:1999-1}.

\begin{prop}[{\cite[Remark~8.14]{Cochran-Orr-Teichner:1999-1}}]
  \label{proposition:height-two-grope}
  A knot in $S^3$ with trivial Arf invariant bounds a capped grope of height two
  immersed in~$D^4$.
\end{prop}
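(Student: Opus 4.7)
This is a classical result of Freedman--Quinn, cited here via Cochran--Orr--Teichner. The plan is to build the capped grope stage by stage, with the hypothesis $\Arf(K)=0$ entering exactly once, to control parities of self-linking numbers of a symplectic basis for a Seifert surface.

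First, take any Seifert surface $F\subset S^3$ for $K$ and push it slightly into the interior of $D^4$; this embedded surface, with boundary $K$, will be the base (first) stage of the grope. Pick simple closed curves $\{\alpha_i,\beta_i\}_{i=1}^{g}$ on $F$ representing a symplectic basis for $H_1(F)$. Recall the classical identity $\Arf(K)=\Arf(q)$, where $q\colon H_1(F;\Z/2)\to\Z/2$ is the quadratic refinement $q(x)=\operatorname{lk}(x,x^+)\bmod 2$ of the intersection form on $F$, with $x^+$ the $F$-push-off. Because $\Arf(K)=0$, the form $q$ is hyperbolic over $\Z/2$, so after an integral change of symplectic basis I may assume $q(\alpha_i)=q(\beta_i)=0$ for every $i$; equivalently, every basis curve has even self-linking with its $F$-push-off.

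Next I would build the second stage: for each basis curve $c\in\{\alpha_i,\beta_i\}$, construct an embedded framed surface $S_c\subset D^4\setminus\nu(F)$ of positive genus with $\partial S_c=c$, whose induced framing on $c$ agrees with the $F$-push-off $c^+$. Start from a Seifert surface for $c$ inside $S^3$, made disjoint from $F$ by general position, and push it into $D^4$; the boundary framing it induces on $c$ differs from the $F$-push-off by an even integer, which can be corrected in steps of two by attaching a chain of twisted bands (adding genus while keeping the surface embedded). By further general position the various $S_c$ can be taken pairwise disjoint. Finally, cap each $S_c$ by attaching immersed framed discs along a symplectic basis of $H_1(S_c)$: such discs exist because $D^4$ is simply connected, and their framings can be matched to those induced by $S_c$ by introducing cusps (which alter the framing by $\pm 2$), at the cost of transverse self-intersections -- which are allowed for the caps of an immersed framed capped grope, and any residual intersections with other parts of the 2-complex can be absorbed by the plumbings permitted in the definition.

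The delicate point throughout is the consistent bookkeeping of framings, and in particular the match between the $F$-push-off $c^+$ and the framing on $c$ induced by $S_c$. It is precisely the parity-control afforded by $\Arf(K)=0$ that makes this matching possible at the critical transition from the base surface to the second stage; the remaining matches at the cap level become automatic once caps are allowed to be immersed. Conversely, the existence of such a capped grope of height two forces $\Arf(K)=0$, matching the well-known characterization of height-two grope bounding by the Arf invariant.
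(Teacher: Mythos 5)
Since the paper does not prove this proposition but cites it from Cochran--Orr--Teichner, there is no internal proof to compare against; your argument follows the standard approach of the cited reference: push a Seifert surface $F$ into $D^4$ as the base stage, use $\Arf(K)=0$ to change symplectic basis so that $q(\alpha_i)=q(\beta_i)=0$, then attach second-stage surfaces and caps. The reduction to a basis with even self-linking $V_{cc}\equiv q(c)\pmod 2$, and the use of genus-increasing moves to shift embedded-surface framings by $\pm 2$, are both correct.

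There is, however, a genuine gap in the sentence ``made disjoint from $F$ by general position.'' Two $2$-dimensional surfaces in a $4$-manifold generically intersect in a finite set of transverse double points, not in the empty set, so general position alone does not yield disjointness. To produce a second-stage surface $S_c$ disjoint from the pushed-in $F$ one needs a homological argument: a pushoff $c^+$ of $c$ off $F$ satisfies $\operatorname{lk}(c^+,K)=0$ because $c$ is nonseparating on $F$, hence $[c^+]=0$ in $H_1(D^4\setminus \nu F)\cong\Z$, and so $c^+$ bounds a surface in the complement of $F$. Likewise the pairwise disjointness of the various $S_c$ is governed by algebraic intersection numbers (Seifert form entries), and the residual geometric intersections must be removed by tubing, adding genus. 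A secondary imprecision concerns the caps: cusps shift a cap framing by $\pm 2$ only, so an odd discrepancy also requires boundary twists; these are allowed here since they create only cap--body intersections, consistent with the paper's convention, but your appeal to ``the remaining matches at the cap level become automatic'' glosses over this point. With these repairs your argument is essentially the standard one.
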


We can now prove the following.
\begin{prop} \label{prop:gropebounding}
Let $K= \#_{i=1}^n R_{\alpha^+,\alpha^-}(J_i^+, J_i^-)$ be a connected sum of satellite knots, where $(R,\alpha^+,\alpha^-)$ is as in the right of Figure~\ref{fig:11n742} and  $\{J_i^+, J_i^{-}\}_{i=1}^n$ a collection of knots with vanishing Arf invariant.
Then $K$ bounds an embedded grope of height 4 in~$D^4$.
\end{prop}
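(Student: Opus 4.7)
The plan is to construct, for each $i$, a capped grope concordance of height $4$ between $K_i$ and $R$ in $S^3 \times I$, cap off the $R$-end with a smoothly embedded slice disc in $D^4$, and then boundary-connected-sum over $i$ to obtain an immersed capped grope of height $4$ bounded by $K$ in $D^4$. Since by the standing convention of the paper every intersection in an immersed capped grope lies between a cap and a body surface, deleting all caps will leave exactly the embedded grope of height $4$ claimed.

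Two ingredients are needed: (a) a satellite capped grope of height $2$ for each of $(R,\alpha^+)$ and $(R,\alpha^-)$ in the sense of Definition~\ref{definition:satellite-capped-grope}; and (b) a capped grope concordance (Definition~\ref{definintion:grope-concordance}) of height $2$ between each $J_i^\pm$ and the unknot. Ingredient~(b) is immediate from Proposition~\ref{proposition:height-two-grope} applied to the Arf-invariant-zero knots $J_i^\pm$: excising a small $4$-ball from the interior of the disc-like capped grope bounding $J_i^\pm$ in $D^4$ converts it into an annulus-like capped grope of height $2$ in $S^3\times I$ whose other boundary component is an unknot. For ingredient~(a), recall from Section~\ref{subsection:example2bipolar} that $\alpha^-=[\beta_1,\beta_2]$ and $\alpha^+=[\gamma_1,\gamma_2]$ with each $\beta_i,\gamma_i \in \pi_1(S^3 \ssm R)^{(1)}$. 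The commutator $\alpha^-$ bounds an embedded once-punctured torus $T\subset S^3$ whose symplectic basis is a pair of parallel copies of $\beta_1$ and $\beta_2$, and inspection of Figure~\ref{fig:11n742} shows that $T$ may be arranged to lie in $S^3\ssm R$. Each $\beta_i$, being in $\pi_1(S^3\ssm R)^{(1)}$, bounds a Seifert-like surface $F_i \subset S^3\ssm R$. Attaching $F_i$ to the $i$th basis curve of $T$ produces a height-$2$ grope with boundary $\alpha^-$ and body disjoint from $R$. Capping off a symplectic basis for each $F_i$ by discs in $S^3$ transverse to $R$, pushing $\alpha^-$ out to its zero-framed longitude $\lambda_{\alpha^-}$ on $\partial\nu(\alpha^-)$, and placing the result at level $0$ in $E_{\alpha^-}\times I$ yields the desired satellite capped grope of height $2$ for $(R,\alpha^-)$. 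The same construction works for $(R,\alpha^+)$, and the two satellite gropes can be arranged with disjoint bodies since $\alpha^+$ and $\alpha^-$ are unlinked in $S^3\ssm R$.

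With both ingredients in hand we apply Proposition~\ref{proposition:product-satellite-grope} successively at the two disjoint infection sites (or equivalently its direct two-curve generalization): combining the height-$2$ satellite capped gropes with the height-$2$ grope concordances $J_i^\pm \rightsquigarrow \text{unknot}$ yields a capped grope concordance of height $2+2=4$ between $R$ and $K_i=R_{\alpha^+,\alpha^-}(J_i^+,J_i^-)$. Since $R$ is smoothly slice we glue in a smoothly embedded slice disc to obtain an immersed capped grope of height $4$ bounded by $K_i$ in $D^4$. Boundary-summing these $n$ capped gropes along arcs in $\partial D^4$ chosen to avoid them gives an immersed capped grope of height $4$ in $D^4$ bounded by $K=\#_{i=1}^n K_i$, and removing the caps (which carry all the self-intersections) finishes the proof. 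The only geometric content that needs careful verification is the embedding claim in ingredient~(a), namely that the once-punctured torus $T$ together with its symplectic-basis surfaces $F_i,F_i'$ really does lie in $S^3\ssm R$; this will be the main obstacle, though a concrete verification directly from Figure~\ref{fig:11n742} should suffice.
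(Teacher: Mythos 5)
Your overall architecture is the same as the paper's: build a satellite capped grope from Figure~\ref{fig:11n742}, invoke Proposition~\ref{proposition:height-two-grope} to convert $\Arf=0$ into a height-two capped grope concordance to the unknot, feed this into Proposition~\ref{proposition:product-satellite-grope}, glue a slice disc, and sum the results over $i$. Where you diverge is in trying to handle \emph{both} infection sites $\alpha^+$ and $\alpha^-$ via satellite gropes, so as to reach $R$ itself. The paper avoids this entirely by noting that $R_{\alpha^+}(J^+)$ is \emph{already} smoothly slice (because $\alpha^+$ misses the derivative $D$), first performing the $\alpha^+$ satellite, and then applying Proposition~\ref{proposition:product-satellite-grope} only once to the pair $(R_{\alpha^+}(J^+),\alpha^-)$; the terminal knot of the grope concordance is then $R_{\alpha^+}(J^+)$, not $R$, and one caps it with the slice disc for that knot.

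Your route opens gaps the single-site argument does not. You invoke a ``direct two-curve generalization'' of Proposition~\ref{proposition:product-satellite-grope}, which is not stated or proved anywhere, and a sequential application is also not free: you would have to verify that the satellite grope constructed in $S^3 \ssm (R \cup \alpha^-)$ for $(R,\alpha^+)$ descends to a satellite grope for $(R_{\alpha^-}(J^-),\alpha^+)$, and then stack the two annulus-like height-$4$ gropes along $R_{\alpha^-}(J^-) \times \{1\}$, none of which you spell out. You also assert that the two satellite gropes ``can be arranged with disjoint bodies since $\alpha^+$ and $\alpha^-$ are unlinked,'' which does not follow from unlinkedness alone and would have to be checked against the figure --- the paper only needs the weaker statement that the $\alpha^-$-grope misses $\alpha^+$, which it reads off directly. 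Finally, saying each $\beta_i$ ``bounds a Seifert-like surface $F_i \subset S^3\ssm R$'' because $\beta_i \in \pi_1(S^3\ssm R)^{(1)}$ is true but under-specifies the construction; the paper builds explicit genus-one tubes (Figure~\ref{fig:height2grope}) precisely so that the two second-stage surfaces are disjoint from each other and the resulting grope embeds. All of these gaps disappear if you adopt the one-site strategy: use $R_{\alpha^+}(J^+)$ (slice) as your terminal knot and run the argument only at $\alpha^-$.
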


\begin{proof}
First, note that it suffices to show that each $ R_{\alpha^+,\alpha^-}(J_i^+, J_i^-)$ bounds
an embedded grope of height 4, since we can then take the boundary connected sum of
such gropes to obtain one with boundary $K$. We therefore show that under the hypothesis that $\Arf(J^+)= \Arf(J^-)=0$ the knot $R_{\alpha^+,\alpha^-}(J^+, J^-)$ bounds a grope of height 4.

Observe that the curve $\alpha^-$ in
Figure~\ref{fig:11n742} bounds a disjoint capped grope of height two embedded
in~$S^3$, where the body surfaces are disjoint from the knot $R$ but the caps
are allowed to intersect~$R$.  This is a geometric analogue of the commutator
relation $\alpha^-=[\beta_1, \beta_2]$ where the curves $\beta_1$ and $\beta_2$
shown in the left of Figure~\ref{fig:11n742} are again commutators in the
fundamental group.

\begin{figure}[ht]
  \includegraphics[height=7cm]{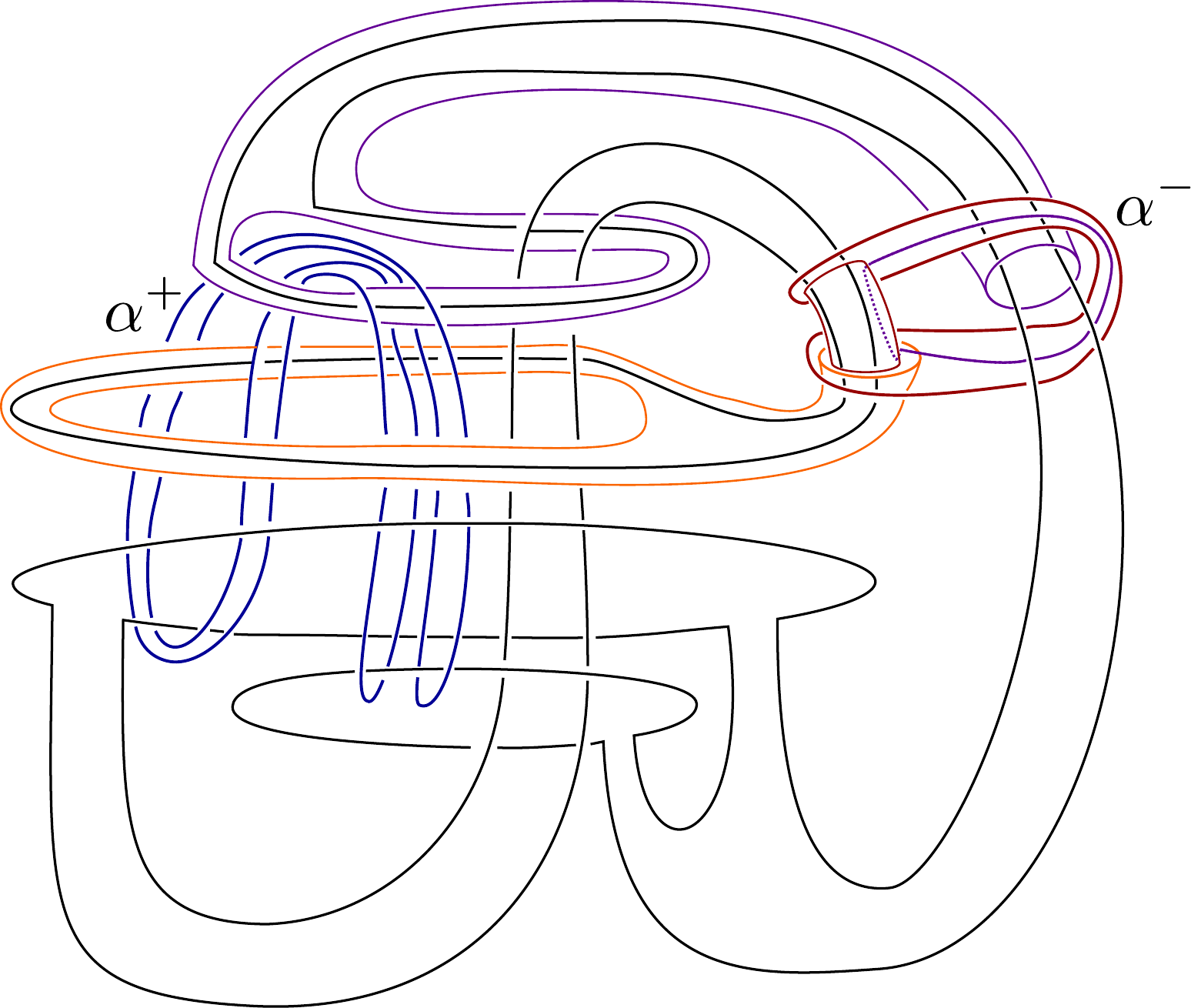}
  \caption{An embedded height 2 grope with boundary $\alpha^-$ in $S^3 \ssm (R \sqcup \alpha^+)$.}
  \label{fig:height2grope}
\end{figure}

Indeed, in the planar diagram in the right of Figure~\ref{fig:11n742}, the
bounded region enclosed by $\alpha^-$ is the projection of an obviously seen
embedded disc which intersects $R$ in four points, and by tubing on this disc,
one obtains a genus one surface, shown in red in Figure~\ref{fig:height2grope},
which is disjoint from~$R$.  This surface is the base surface of the promised
height two grope bounded by~$\alpha^-$.  The curves $\beta_1$ and $\beta_2$ are
parallel to standard basis curves of the base surface, and they bound disjoint
genus one surfaces obtained by tubing the obviously seen discs along the
knot~$R$, as illustrated in Figure~\ref{fig:height2grope}.  Attach them to the
base stage surface to obtain a height two grope.

Note that all the surfaces used above are disjoint from the other curve
$\alpha^+$, so by performing the satellite construction, we obtain a height two
grope in $S^3\ssm R_{\alpha_+}(J^+)$ bounded by~$\alpha^-$.  Identify $S^3$ with
$S^3\times 0 \subset S^3\times I$, push the interior of the grope into the
interior of $S^3\times I$, and add caps using the simple connectedness of
$S^3\times I$ as noted above.  Apply general position to make the caps
transverse to $R_{\alpha_+}(J^+)\times I$, to obtain a satellite capped grope
for~$(R_{\alpha_+}(J^+),\alpha^-)$.

Since the knot $J^-$ has trivial Arf invariant, $J^-$ bounds a capped
grope of height two in~$D^4$, by Proposition~\ref{proposition:height-two-grope}.
Remove, from $D^4$, a small open 4-ball which intersects the capped grope in an
unknotted 2-disc lying in the interior of the base surface, to obtain a capped
grope concordance of height two between $J^-$ and the trivial knot.  By
Proposition~\ref{proposition:product-satellite-grope} and the above paragraph,
 the satellite knot $R_{\alpha^+,\alpha^-}(J^+,J^-) =
(R_{\alpha^+}(J^+))_{\alpha^-}(J^-)$ is height 4 capped grope concordant to
the knot~$R_{\alpha^+}(J^+)$.  Forget the caps of this capped grope concordance,
and attach a slicing disc for the knot $R_{\alpha^+}(J^+)$, to obtain a grope of
height 4 bounded by~$R_{\alpha^+,\alpha^-}(J^+,J^-)$.
\end{proof}

\begin{remark}
  A similar argument shows the existence of a bounding grope of height 4 for the
  simpler example in Section~\ref{subsection:example2solvable}.  In this case,
  the height two surfaces constructed by ``tubing along the knot $R$'' in the
  3-space are not disjoint, but the intersection can be removed by pushing the
  surfaces into 4-space.   We omit the details.
\end{remark}

\bibliographystyle{amsalpha}
\def\MR#1{}
\bibliography{research}
\end{document}